\gdef\@date{}
\let\m@themphpunct\emphpunct
\renewcommand{\emphpunct}[2][3000]{%
  \ifmmode
    \m@themphpunct{#2}%
  \else
    #2\spacefactor#1{}%
  \fi}
\newcommand{\longsimto}{\overset{\textstyle\sim}{\sm@shrelto.3ex\longto}}
\newcommand{\longsubsetto}{\overset{\textstyle\subset\mkern4mu}{\sm@shrelto.5ex\longto}}
\DeclareMathOperator{\clos}{clos}
\DeclareMathOperator{\Ob}{Ob}
\DeclareMathOperator{\Mor}{Mor}
\newcommand{\void}{\bgroup\phantom e\egroup}
\newcommand{\ltangent}[2]{T^{%
  \makebox[.64em][c]{$\scriptstyle\frown$}%
    \kern-.32em
  \makebox[.em][c]{\raisebox{.30ex}{$\scriptscriptstyle\leftrightarrow$}}%
    \kern+.32em}_{#1}#2_{(0)}}
\newcommand{\ttangent}[2]{T^{%
  \makebox[.64em][c]{$\scriptstyle\frown$}%
    \kern-.32em
  \makebox[.em][c]{\raisebox{.23ex}[.5\totalheight][.ex]{$\scriptscriptstyle\updownarrow$}}%
    \kern+.32em}_{#1}#2_{(0)}}
\newcommand{\eff}{\varepsilon}
\newcommand\nefd@t{\bgroup\boldsymbol.\egroup}
\newcommand{\nef}[1]{\accentset{\textstyle\nefd@t}{#1}}
\newcommand{\act}[1]{\accentset{\circlearrowright}{#1}}
\newcommand{\catname}[1]{\mathbf{#1}}
\newcommand{\catquot}[2]{#1_{/#2}}
\newcommand\LGpd@{\catname{LGpd}}
\newcommand\LGpd@dot{\LGpd@^{\boldsymbol\cdot}}
\newcommand{\LGpd}{\@ifstar\LGpd@dot\LGpd@}
\newcommand{\natisoto}{\Rightarrow}
\newcommand{\natcongto}{\overset{\nefd@t\mkern4mu}{\sm@shrelto.8ex\Rightarrow}}
\newcommand{\natiso}{\equiv}
\newcommand{\natcong}{\stackrel\nefd@t\equiv}
\newcommand{\natisoclass}[1]{[#1]}
\newcommand{\natcongclass}[1]{[#1]^{\boldsymbol\cdot}}
\newcommand{\RedLGpd}{\catname{RedLGpd}}
\newcommand{\gpbundleto}{\overset{s=t\mkern4mu}{\sm@shrelto.6ex\longto}}
\newcommand{\gpbundlefro}{\overset{\mkern4mus=t}{\sm@shrelto.6ex\longfro}}
\renewcommand{\Top}{\catname{Top}}
\newcommand{\Skel}{\catname{Skel}}
\newcommand{\effLGpd}{\catname{effLGpd}}
\newcommand{\imbedto}{\longsubsetto}
\newcommand{\efforbGpd}{\catname{efforbGpd}}
\newcommand{\RedOrb}{\catname{RedOrb}}
\theoremstyle{definition}
  \newtheorem{exm}[stmt]{Example}
  \newtheorem*{AxiomI}{\itshape Axiom I}
  \newtheorem*{AxiomII}{\itshape Axiom II}
  \newtheorem*{AxiomIII}{\itshape Axiom III}
\theoremstyle{remark}
  \newtheorem*{rmk*}{Remark}
  \newtheorem*{rmkterm*}{Remark on terminology}
\title{Reduced smooth stacks?%
}%
\author{Giorgio Trentinaglia%
  \thanks{The author acknowledges support %
          from the Portuguese Foundation for Science and Technology %
          ({\it Fun\-da\-\c c\~ao pa\-ra a Ci\-\^en\-cia e a Tec\-no\-lo\-gia}) %
          through grant \#~\mbox{SFRH/BPD/81810/2011}.}
  \\[+1ex]\small\itshape
        Centro de An\'alise Matem\'atica, Geometria e Sistemas Din\^amicos%
  \\[-1ex]\small\itshape
        Instituto Superior T\'ecnico, %
        Av.~Rovisco Pais, 1049-001 Lisbon, Portugal%
}%
\begin{document}
\maketitle

\begin{abstract} \noindent An arbitrary Lie groupoid gives rise to a groupoid of germs of local diffeomorphisms over its base manifold, known as its \emph{effect}. The effect of any bundle of Lie groups is trivial. All quotients of a given Lie groupoid determine the same effect. It is natural to regard the effects of any two Morita equivalent Lie groupoids as being ``equivalent''. In this paper we shall describe a systematic way of comparing the effects of different Lie groupoids. In particular, we shall rigorously define what it means for two arbitrary Lie groupoids to give rise to ``equivalent'' effects. For effective orbifold groupoids, the new notion of equivalence turns out to coincide with the traditional notion of Morita equivalence. Our analysis is relevant to the presentation theory of proper smooth stacks. \end{abstract}

\tableofcontents

\section*{Introduction}

The presentation theorem is a classical result in the theory of orbifolds which essentially dates back to the original papers on ``$V$\mdash manifolds'' by Sa\-ta\-ke and others \cite{Sa,Ka}. (A detailed exposition is given in \cite{MM}.) In modern language \cite{MP,Mo1}, this theorem states that any effective orbifold groupoid%
\footnote{%
See Footnote~\ref{ftn:orbifold} on page~\pageref{ftn:orbifold}.%
} %
is Morita equivalent to the translation groupoid associated to some compact Lie group action on a smooth manifold; of course, the action in question will be effective and have discrete stabilizers. Thus, when regarded as a smooth (De\-ligne\textendash Mum\-ford) stack \cite{Me,Le}, any effective orbifold is isomorphic to a stack of the form \([M/G]\), where \(G\) is a compact Lie group and \(M\) is a smooth manifold on which \(G\) operates smoothly and with discrete stabilizers. The importance of this result nowadays lies principally in the fact that it enables one to reduce the computation of many topological and cohomological invariants of orbifolds to a better understood special case \cite{ALR}. A number of popular research topics in orbifold theory are, in a way or another, related to the presentation theorem. For instance, a long-stand\-ing conjecture affirms that any, say, connected, smooth orbifold stack (effective or not) is of the form \([M/G]\), for \(G\emphpunct,M\) as above. (The reader is referred to \cite{HeM} for an exhaustive discussion and partial results on this conjecture.) For various purposes, for example, for computing Chen\textendash Ruan orbifold cohomology \cite{FaG}, classifying smooth symplectic resolutions of (possibly singular) affine Poisson varieties \cite{GiK}, or building models of conformal field theories on singular spaces \cite{DFMS}, it is important to understand under what conditions an orbifold stack is isomorphic to a ``global quotient'', that is to say, to a stack of the form \([V/G]\), where \(G\) is a finite group of smooth automorphisms of a non-sin\-gu\-lar manifold \(V\). The presentation theorem itself proves to be a useful tool in the study of this kind of problem \cite{ALR}. The presentation theorem also has application in the study of asymptotic spectral properties of elliptic operators on orbifolds \cite{Ko}. The present paper originates from the author's endeavor to extend the presentation theorem beyond the scope of orbifold theory \cite{Tr7} with the intent of gaining a better geometric understanding of general proper smooth stacks and, possibly, laying the foundations for a classification theory of such objects along the lines of \cite{Mo2}.

It turns out that presentation results are by no means special to orbifolds. It is a well-known fact (for a proof of which we refer the reader to Section~5 of \cite{Tr2}) that any Lie groupoid which admits faithful representations is Morita equivalent to the translation groupoid associated to a smooth action of some Lie group on a smooth manifold; when the groupoid is proper, the Lie group can be taken to be compact. The presentation theorem for effective orbifolds is then simply a corollary of this fact and of the fact that any effective orbifold groupoid admits a canonical faithful representation on the tangent bundle of its base manifold. The theorem is thus substantially a result in the representation theory of Lie groupoids. Even though Lie groupoids normally do not admit faithful representations \cite{Tr4,JeM}, in view of recent results obtained by the author \cite{Tr7} it seems plausible that the more ``effectively'' a given proper Lie groupoid acts on its own base the larger is the number of ``interesting'' representations the groupoid possesses. If we agree to call a Lie groupoid \emph{effective} when the only isotropic arrows that have trivial infinitesimal effect (compare Section~\ref{sec:1} below) are the identities, one may conjecture that any effective proper Lie groupoid admits faithful representations. Although at first sight this assertion might look like a good candidate for a generalized presentation theorem, it does not take long to realize that in the non-\'etale case the notion of effective groupoid which we have just introduced is so restrictive that the conjectured result, even if true, would be of little practical utility. [By way of example consider the translation groupoid \(\varGamma=\SO(3)\ltimes\R^3\rightrightarrows\R^3\) associated to the canonical action of the special orthogonal group \(\SO(3)\) on three-di\-men\-sion\-al euclidean space. Since this action is faithful, one would like to say that \(\varGamma\) acts effectively on its base \(\R^3\). On the other hand, outside the origin all the isotropic arrows of \(\varGamma\) have trivial effect.] The approach we propose instead is the following.

Our starting point is the observation that an arbitrary orbifold groupoid can be presented as an extension of some {\em effective} orbifold groupoid by a bundle of finite groups in a canonical fashion: in fact, the ineffective isotropic arrows of an arbitrary orbifold groupoid \(\varGamma\) form a bundle of finite groups \(K\) over the base manifold \(M\) of \(\varGamma\), and the quotient groupoid \(\varGamma/K\rightrightarrows M\) is an effective orbifold groupoid. (Compare e.g.~\cite[\S 2.2]{HeM}.) In general, for an arbitrary short exact sequence of Lie groupoid homomorphisms \(1\to K\emto\varGamma\onto\varGamma'\to1\) presenting a given Lie groupoid \(\varGamma\rightrightarrows M\) as an extension of some other Lie groupoid \(\varGamma'\rightrightarrows M\) by a bundle of Lie groups \(K\), the kernel of the homomorphism \(\varGamma\onto\varGamma'\) consists of ineffective arrows. Heuristically speaking, we may express the circumstance that the two Lie groupoids \(\varGamma\) and \(\varGamma'\) induce the same ``action'' by germs of local diffeomorphisms on their base manifold \(M\) by saying that they give rise to the same ``effective transversal geometry''. Thus, we may liberally rephrase the classical presentation theorem for orbifolds by saying that an arbitrary orbifold groupoid gives rise to the same ``effective transversal geometry'' as the translation groupoid associated to some compact Lie group that acts on a smooth manifold effectively and with discrete stabilizers. Much of the above picture naturally generalizes from orbifold groupoids to arbitrary proper Lie groupoids, in the following manner. Suppose a proper Lie groupoid \(\varGamma\) fits in a short exact sequence of Lie groupoid homomorphisms \(1\to K\emto\varGamma\onto\varGamma'\to1\) where \(\varGamma'\) is a {\em faithfully representable} proper Lie groupoid and \(K\) is a bundle of compact Lie groups. By one of the preceding remarks, if we pick a faithful representation of \(\varGamma'\) and pull it back along the homomorphism \(\varGamma\onto\varGamma'\) to a representation of \(\varGamma\), we obtain what in \cite{Tr4} is called an \emph{effective} representation, i.e.~one whose kernel consists of ineffective arrows. Conversely, it follows from results proven in \cite[\S 4]{Tr4} that an arbitrary proper Lie groupoid which is effectively representable fits in a short exact sequence of the preceding form. On the basis of these considerations, we argue that any positive result about the existence of effective representations ought to be regarded as a presentation theorem of a generalized kind. Allowing ourselves a certain freedom of speech, we might further argue that the primary goal of the presentation theory of proper smooth stacks is to give an explicit characterization of those proper Lie groupoids which give rise to the same ``effective transversal geometries'' as effectively representable proper Lie groupoids or, equivalently, as the translation groupoids associated to compact Lie group actions on smooth manifolds.

The present article is intended to put the heuristic statements involved in the above speculations on a firm mathematical basis. To this end, we are going to propose a conceptual framework for the study of what we shall call ``generalized reduced orbifolds'' or, better, ``reduced smooth stacks''. The basic idea behind our theory is that, instead of trying to define what it means for a smooth stack to be ``reduced'', one should try to rigorously make sense of the assertion that two smooth stacks share the same ``effective transversal geometry''. In the spirit of F.~Klein's {\it Er\-lan\-gen program}, in order to define when two Lie groupoids (regarded as smooth stacks) give rise to the same ``effective transversal geometry'' without having to say what the ``effective transversal geometry'' associated with a Lie groupoid is, we are going to highlight a certain class of Lie groupoid homomorphisms which in a plausible sense preserve all of the groupoid effective transversal structure and then, by a standard categorical localization procedure, declare the members of that class to be isomorphisms. By definition, two Lie groupoids give rise to the same ``effective transversal geometry'' if they turn out to be isomorphic when regarded as objects of the resulting localized category. Of course there are some difficulties which we have to face if we want to make sense of these ideas in a truly satisfactory way: our localized category should admit a \emph{calculus of fractions} \cite{GZ}. This is required, for instance, in order to show that the notion of reduced smooth stack is indeed a generalization of the notion of reduced orbifold (we want the category of reduced orbifolds to imbed into that of reduced smooth stacks as a full subcategory).

We shall now give a sec\-tion-by-sec\-tion description of our article and, with it, some information about the above-men\-tioned localized category. The first two sections are essentially preparatory. In Section~\ref{sec:1} we review some background notions, such as the notion of \emph{effect of an arrow} or the notion of \emph{ineffective isotropy group}, and study how these behave with respect to homomorphisms. In Section~\ref{sec:2} we make a preliminary study of the homomorphisms that will be formally inverted in the process of constructing our localized category, in particular, we explain in what sense these homomorphisms preserve the effective transversal structure of Lie groupoids. In Section~\ref{sec:3}, we describe our prototype category of reduced smooth stacks. At the outset there is the category \(\LGpd*\) with objects all Lie groupoids and with morphisms all Lie groupoid homomorphisms that carry ineffective isotropic arrows into ineffective isotropic arrows. On the morphisms of this category, we introduce an equivalence relation, which we call \emph{natural congruence}, which identifies two homomorphisms when there exists some smooth transformation between them that is natural ``modulo ineffective isotropy''. Next, we form the category \(\catquot{\LGpd*}{\natcong}\) with objects all Lie groupoids and with morphisms all natural congruence classes of homomorphisms in \(\LGpd*\). We show that the homomorphisms studied in Section~\ref{sec:2}, which automatically belong to \(\LGpd*\), project down under the quotient functor \(\LGpd*\to\catquot{\LGpd*}{\natcong}\) to a class of morphisms \(\mathcal E\) which admits a calculus of right fractions. The localized category \(\catquot{\LGpd*}{\natcong}[\mathcal E^{-1}]\) is our prototype category of reduced smooth stacks. We call \emph{effective equivalence} a homomorphism in \(\LGpd*\) whose image under the canonical functor \(\LGpd*\to\catquot{\LGpd*}{\natcong}[\mathcal E^{-1}]\) is an isomorphism. In Section~\ref{sec:4} we show that any effective equivalence induces a homeomorphism at orbit space level and preserves the effective infinitesimal transversal structure at every base point. Moreover, we show that the category of reduced orbifolds imbeds canonically into \(\catquot{\LGpd*}{\natcong}[\mathcal E^{-1}]\). Finally, in Appendix~\ref{sec:A}, we point out a few consequences of an assumption we make on Lie groupoids, namely, second countability, which motivate some basic definitions we give in Sections~\ref{sec:2}\textendash \ref{sec:3}. Among those consequences there are the following two statements\emphpunct: (1)~\em Any fully faithful homomorphism of Lie groupoids that at base level covers the identity map is an isomorphism\em. (2)~\em Any Lie groupoid with just one orbit is transitive\em.

Apart from the formulation of generalized presentation results and the related study of effective representations, which provided the original motivation for our analysis, there is another context where the ideas outlined in the present article are likely to find application, namely, the theory of Riemannian metrics on Lie groupoids propounded recently in \cite{FdH}. (We are indebted to M.~del~Ho\-yo for drawing our attention to this potential utilization of our theory.) The existence of such metrics is a property of Lie groupoids which presumably only depends on the underlying ``reduced smooth stacks'' and, therefore, is invariant under effective equivalence. Our theory may even provide a convenient framework for the analysis of other (e.g.~symplectic or Poisson) ``transversely invariant'' geometric structures on Lie groupoids. The problem of how to give a systematic treatment of such structures is relevant e.g.~to Poisson geometry \cite{FOR}.

We consider the constructions described in this article to be simply a first step towards a full-fledged theory of reduced smooth stacks. In particular, the localized category \(\catquot{\LGpd*}{\natcong}[\mathcal E^{-1}]\) should be regarded simply as a ``minimal working model'' which, albeit already satisfactory from the point of view of our original objectives, may lend itself to further development. Just to mention one possibility, we have contented ourselves with only formally inverting those homomorphisms (among those that preserve the effective transversal structure) for which the standard weak pullback construction suffices to establish the existence of a calculus of fractions. Nothing however excludes that, by suitably modifying that construction, one might be able to invert a larger class of homomorphisms. Ideally, we would like effective equivalences to admit an explicit characterization (at best, one stable under natural congruence), like weak equivalences. We have also deliberately ignored any 2-cat\-e\-gor\-i\-cal aspect in our exposition. However, as argued in \cite{Le}, higher-lev\-el information ought to be taken into account in order to obtain a fully satisfactory theory. The appropriate setting for a theory of ``reduced smooth stacks'' is, most probably, that of bicategories of fractions \cite{Pr}. All these aspects, along with those mentioned in the previous paragraph, shall be addressed elsewhere.

\subsection*{Overall conventions about terminology and notation}

Throughout the article the name \emph{groupoid} will designate a {\em small}\/ category in which all arrows are invertible. A generic groupoid \(\varGamma\) will be written \(\varGamma_{(1)}\rightrightarrows\varGamma_{(0)}\) when there is need to specify its set of objects \(\varGamma_{(0)}\) (also called the \emph{base} of \(\varGamma\)) and its set of arrows \(\varGamma_{(1)}\) (itself often written \(\varGamma\) by abuse of notation) individually. The structure maps of a groupoid \(\varGamma\) will be denoted \(s^\varGamma\) (\emph{source}), \(t^\varGamma\) (\emph{target}), \(m^\varGamma\) (\emph{composition law}), \(u^\varGamma\) (\emph{unit}) and \(i^\varGamma\) (\emph{inverse}), omitting the superscript `\(\varGamma\)' whenever there is no risk of ambiguity. For every pair of objects \(x,y\in\varGamma_{(0)}\) the set of all arrows of source \(x\) respectively target \(y\) will be indicated by \(\varGamma^x=\varGamma(x,-)\) respectively \(\varGamma_y=\varGamma(-,y)\); moreover \(\varGamma^x_y=\varGamma(x,y)\) will denote the intersection \(\varGamma^x\cap\varGamma_y\). The following standard abbreviations will be used systematically: \(sg\) for \(s^\varGamma(g)\)\emphpunct[2000]; \(tg\) for \(t^\varGamma(g)\)\emphpunct[2000]; \(g'g\) for \(m^\varGamma(g',g)\) when \(sg'=tg\)\emphpunct[2000]; \(1_x\) or just \(x\) for \(u^\varGamma(x)\)\emphpunct[2000]; \(g^{-1}\) for \(i^\varGamma(g)\).

By a \emph{differentiable manifold} we mean a (non-emp\-ty) locally compact manifold of class \(C^\infty\). For each point \(x\) of a differentiable manifold \(X\) there is some local chart \(\varphi:U\approxto\R^n\) of class \(C^\infty\) centered at \(x=\varphi^{-1}(0)\in U\). The integer \(n\in\N\), which does not depend on the choice of \(\varphi\), is called the \emph{local dimension} of \(X\) at \(x\) and indicated by \(\dim_xX\). The function \(\dim X:X\to\N\) is locally constant over \(X\). When it is (overall) constant, we say \(X\) is of \emph{constant dimension}. By a \emph{smooth manifold} we mean a differentiable manifold of constant dimension whose topology is Hausdorff and possesses a countable basis of open sets.

A \emph{differentiable groupoid} will be a groupoid \(\varGamma\rightrightarrows X\) in which \(X\) and \(\varGamma\) are differentiable manifolds, \(s^\varGamma\) and \(t^\varGamma\) are {\em submersive} differentiable maps, and the other groupoid structure maps (namely \(m^\varGamma\), \(u^\varGamma\) and \(i^\varGamma\)) are differentiable. A \emph{homomorphism} (of differentiable groupoids) will be a differentiable functor. The term \emph{Lie groupoid} will be regarded as synonymous with \emph{smooth groupoid}; the latter term indicates a differentiable groupoid whose manifold of objects and whose manifold of arrows are both smooth.


\section{Ineffective isotropy}\label{sec:1}

The purpose of this section is to provide a self-con\-tained introduction to some concepts which lie at the heart of the theory expounded in Sections~\ref{sec:2}~to \ref{sec:4}. There is not much claim to originality to be made here. In view of the fundamental role played by the notions to be discussed below, the reader will forgive us if, occasionally, we give full proofs of well-known facts. We will start by reviewing the basic structure theory of differentiable groupoids, in particular, the notion of orbit; in combination with Go\-de\-ment's theorem \cite{Se}, the arguments given in \cite{MM} are essentially still valid in the present, more general context.

Let \(\varGamma\rightrightarrows X\) be an arbitrary differentiable groupoid. For each pair of base points \(x,y\in X\) the subset \(\varGamma^x_y=\varGamma(x,y)\subset\varGamma\) is a differentiable submanifold of \(\varGamma\). In particular, the \emph{isotropy group} \(\varGamma^x_x=\varGamma(x,x)\) has a canonical differentiable group structure. The source fiber \(\varGamma^x=\varGamma(x,-)=s^{-1}(x)\subset\varGamma\) is also a differentiable submanifold of \(\varGamma\). The composition of arrows restricts to a differentiable action of the group \(G_x=\varGamma^x_x\) on the manifold \(\varGamma^x\) from the right. This action is free and has the property that on the quotient set \(\varGamma^x/G_x\) there exists a unique differentiable structure relative to which the canonical projection \(\pr^\varGamma_x:\varGamma^x\onto\varGamma^x/G_x\) becomes a submersion. The differentiable manifold obtained in this way shall be denoted by \(O^\varGamma_x\) and referred to as the \emph{orbit} of \(\varGamma\) (\emph{\(\varGamma\)\mdash orbit}) through \(x\). The action of the group \(G_x\) on the manifold \(\varGamma^x\) makes the differentiable fibration \(\pr^\varGamma_x:\varGamma^x\onto O^\varGamma_x\) into a principal right \(G_x\)\mdash bundle over \(O^\varGamma_x\). Hereafter we will usually write `\([g]\)' in place of `\(\pr^\varGamma_x(g)\)'.

Every orbit \(O^\varGamma_x\) is injectively immersed into the groupoid base \(X\) in a canonical fashion. Namely, there is a unique map \(\incl^\varGamma_x:O^\varGamma_x\to X\) such that the composition \(\incl^\varGamma_x\circ\pr^\varGamma_x\) equals \(t^x:\varGamma^x\to X\), the restriction of the target map to the source fiber, and this map is necessarily differentiable, injective and immersive. The \emph{longitudinal tangent space} at \(x\) is defined to be the image of the tangent linear map \(T_{[x]}\incl^\varGamma_x:T_{[x]}O^\varGamma_x\into T_xX\),
\begin{equation*}
 \ltangent x\varGamma:=\im(T_{[x]}\incl^\varGamma_x)\subset T_xX
\end{equation*}
(the base point \(x\) and the corresponding unit arrow \(1_x\) being identified notationally). The \emph{transversal tangent space} at \(x\) is defined to be the quotient vector space
\begin{equation*}
 \ttangent x\varGamma:=\quot*T_xX\by\ltangent x\varGamma\endquot\textpunct.
\end{equation*}
Intuitively, the former space consists of all those vectors in \(T_xX\) that are ``tangent'' to the orbit \(O^\varGamma_x\), whereas the latter consists of those that are ``perpendicular'' to it.

We observe that for every arrow \(g\in\varGamma\)
\begin{equation}
\label{eqn:12B.16.2}
 \im(T_{[tg]}\incl_{tg})=\im(T_{[g]}\incl_{sg})\textpunct.
\end{equation}
To see this, notice that the right-trans\-la\-tion map \(h\mapsto hg\) is a diffeomorphism of \(\varGamma^{tg}\) onto \(\varGamma^{sg}\) which is equivariant relative to the group isomorphism \(\varGamma^{tg}_{tg}\simto\varGamma^{sg}_{sg}\) given by conjugation by \(g^{-1}\) and therefore descends to a well-de\-fined diffeomorphism between the orbits \(O_{tg}\) and \(O_{sg}\) which intertwines the maps \(\incl_{tg}\) and \(\incl_{sg}\).

Let an arrow \(g\in\varGamma\) be given. Put \(x=sg\) and \(x'=tg\). Consider an arbitrary local differentiable section \(\gamma:U\emto\varGamma\) to the source map \(s:\varGamma\onto X\) through \(g=\gamma(x)\). The tangent linear map \(T_x(t\circ\gamma):T_xX\to T_{x'}X\) carries the longitudinal subspace \(\ltangent x\varGamma\subset T_xX\) into its counterpart \(\ltangent{x'}\varGamma\subset T_{x'}X\); this follows from the existence of a differentiable map \(c:\incl_x^{-1}(U)\to O_{x'}\) such that \(\incl_{x'}\circ c=(t\circ\gamma)\circ\incl_x\) [such a map exists as a corollary to the local triviality of the orbit fibration \(\pr_x:\varGamma^x\onto O_x\)]. As a consequence, \(T_x(t\circ\gamma)\) must induce a well-de\-fined linear map between the transversal tangent spaces \(\ttangent x\varGamma\) and \(\ttangent{x'}\varGamma\); we let this map be indicated by \(\eff^\gamma_x\) provisionally.

\begin{lem}\label{lem:12B.16.1} The linear map\/ \(\eff^\gamma_x:\ttangent x\varGamma\to\ttangent{x'}\varGamma\) does not depend on the choice of a local source section\/ \(\gamma\) through the given arrow\/ \(g\in\varGamma(x,x')\). \end{lem}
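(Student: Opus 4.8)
The plan is to reduce the claim to the case where two source sections agree at the base point but differ elsewhere, and then show that the induced map on transversal spaces is insensitive to such a difference. So suppose $\gamma,\gamma':U\emto\varGamma$ are two local differentiable sections of $s$ through the same arrow $g\in\varGamma(x,x')$, defined on a common open neighborhood $U$ of $x$ (we may always shrink domains to a common one). Since $\gamma(x)=\gamma'(x)=g$, the two composite maps $t\circ\gamma,\,t\circ\gamma':U\to X$ agree at $x$; I claim it suffices to control the difference $T_x(t\circ\gamma)-T_x(t\circ\gamma')$ modulo the longitudinal subspace $\ltangent{x'}\varGamma$. Concretely, for any $v\in T_xX$ the vector $T_x(t\circ\gamma)(v)-T_x(t\circ\gamma')(v)$ should land in $\ltangent{x'}\varGamma$, which is exactly what is needed for the two induced maps $\ttangent x\varGamma\to\ttangent{x'}\varGamma$ to coincide.

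To see why the difference is longitudinal, consider the map $\delta:U\to\varGamma$ sending $y\mapsto \gamma'(y)\,\gamma(y)^{-1}$. This is well-defined since $s\gamma'(y)=y=s\gamma(y)$, it is differentiable, and it satisfies $s\delta(y)=t\gamma(y)$ while $\delta(x)=g\,g^{-1}=1_{x'}$. Thus $\delta$ is "almost" a local source section through the unit $1_{x'}$, after reparametrizing the base by $t\circ\gamma$. Moreover $t\circ\gamma' = (t\circ\delta)\ast(t\circ\gamma)$ in the sense that $t\gamma'(y)=t\bigl(\delta(y)\bigr)$ for all $y$, because $t(\gamma'(y)\gamma(y)^{-1})=t\gamma'(y)$. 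Hence $t\circ\gamma'=t\circ\delta$ as maps $U\to X$, and differentiating at $x$ gives $T_x(t\circ\gamma')=T_{1_{x'}}(t\circ\delta)\circ(\text{id})$... wait — more carefully, I would instead compose: $\delta$ can be viewed as a local source section of $\varGamma$ based at $1_{x'}$, precomposed with the map $t\circ\gamma:U\to O_{x}\subset X$ (which lands in a neighborhood of $x'$). The key point is that a local source section through a unit arrow induces, via $t$, a map whose tangent at the base point is the identity modulo the longitudinal direction: indeed, the discussion just before the lemma (the existence of the differentiable map $c$ with $\incl_{x'}\circ c=(t\circ\gamma)\circ\incl_x$) shows precisely that $T_x(t\circ\gamma)$ preserves longitudinal subspaces and its transversal quotient depends only on the "germ of the orbit-foliation map". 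For a section through a \emph{unit}, that transversal quotient map is forced to be the identity. Applying this to $\delta$ yields $T_x(t\circ\gamma')\equiv T_x(t\circ\gamma)\pmod{\ltangent{x'}\varGamma}$, which is the assertion.

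The main obstacle I expect is making the step "a local source section through a unit arrow induces the identity modulo longitudinal directions" fully rigorous — in particular, correctly keeping track of which base point the section $\delta$ is anchored at and ensuring the reparametrization by $t\circ\gamma$ does not spoil the argument. One clean way to sidestep the reparametrization bookkeeping is to argue directly: pick any $v\in T_xX$, lift it to a path $y(\tau)$ in $U$ with $y(0)=x$, $\dot y(0)=v$, and set $g(\tau)=\gamma'(y(\tau))\gamma(y(\tau))^{-1}\in\varGamma^{x'}$ — wait, $s g(\tau)=t\gamma(y(\tau))$ varies, so instead one uses that $[g(\tau)]\in O_{t\gamma(y(\tau))}$ traces out a path in the total space of the orbit fibration. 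Then $\frac{d}{d\tau}\big|_{0}\,t\gamma'(y(\tau)) - \frac{d}{d\tau}\big|_{0}\,t\gamma(y(\tau))$ is the image under $T$ of a tangent vector to this fibration over a single orbit, hence lies in $\ltangent{x'}\varGamma$ by the very definition of the longitudinal space and equation~\eqref{eqn:12B.16.2}. This path-based computation is routine once set up and avoids delicate statements about germs; I would present it that way. Finally I would remark that the general case (sections not sharing a common domain, or the apparent dependence on $g$ rather than on the pair $x,x'$) reduces trivially to the above, and that this justifies writing $\eff^g_x$ or even suppressing $\gamma$ entirely, as anticipated in the text.
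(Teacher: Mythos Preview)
Your target is exactly right: one must show that the difference $T_x(t\circ\gamma)-T_x(t\circ\gamma')$ takes values in the longitudinal subspace $\ltangent{x'}\varGamma$. But your route to this is far more circuitous than necessary, and the final path-based step is not actually justified.

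The paper's argument is a two-line linear computation with no groupoid multiplication involved. Since $\gamma(x)=\gamma'(x)=g$, one has
\[
T_x(t\circ\gamma)-T_x(t\circ\gamma')=T_gt\circ\bigl(T_x\gamma-T_x\gamma'\bigr),
\]
and the parenthesized difference lands in $\ker T_gs$ because both $\gamma,\gamma'$ are $s$-sections. It then suffices to note that $T_gt$ carries $\ker T_gs$ into $\ltangent{x'}\varGamma$; this is immediate from the factorization $t^x=\incl_x\circ\pr_x$ on the source fiber together with equation~\eqref{eqn:12B.16.2}. That is the whole proof.

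Your detour through $\delta(y)=\gamma'(y)\gamma(y)^{-1}$ creates the very bookkeeping problem you flag: $s\circ\delta=t\circ\gamma$ is not constant, so $\delta$ is not a source section through $1_{x'}$ in the required sense, and the claim ``a section through a unit induces the identity on transversals'' cannot be invoked as stated. Your fallback path argument then aims at the assertion that for the curve $g(\tau)=\gamma'(y(\tau))\gamma(y(\tau))^{-1}$ with $g(0)=1_{x'}$ one has $\bigl(T_{1_{x'}}t-T_{1_{x'}}s\bigr)(\dot g(0))\in\ltangent{x'}\varGamma$. This is true, but your justification (``tangent vector to this fibration over a single orbit'') is not correct\textemdash $g(\tau)$ does not stay in any single source fiber\textemdash and making it rigorous amounts precisely to the observation above that $T t$ maps $\ker Ts$ into the longitudinal space, applied at $1_{x'}$ instead of at $g$. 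So you are implicitly relying on the paper's key step without isolating it, wrapped in extra structure (multiplication, reparametrization, paths) that contributes nothing. Drop $\delta$ entirely and work directly with $T_x\gamma-T_x\gamma'$.
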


\begin{proof} Let \(\gamma_1\) and \(\gamma_2\) be local \(s\)\mdash sections through \(g=\gamma_1(x)=\gamma_2(x)\). We have
\begin{equation*}
 T_x(t\circ\gamma_1)-T_x(t\circ\gamma_2)=T_gt\circ(T_x\gamma_1-T_x\gamma_2)\textpunct.
\end{equation*}
The difference \(T_x\gamma_1-T_x\gamma_2\) takes values in the \(s\)\mdash vertical tangent space \(\ker T_gs\). Now, since \(T_gt^x=T_{[g]}\incl_x\circ T_g\pr_x\), it is clear from the identity~\eqref{eqn:12B.16.2} that \(T_gt\) maps \(\ker T_gs\) into \(\ltangent{x'}\varGamma\). Thus the two linear maps \(T_x(t\circ\gamma_i)\sidetext[i=1,2]\) differ from one another by a linear map taking values in \(\ltangent{x'}\varGamma\) and hence induce the same map between the transversal tangent spaces \(\ttangent x\varGamma\) and \(\ttangent{x'}\varGamma\). \end{proof}

We set \(\eff(g):=\eff^\gamma_x\) and call this the (\emph{infinitesimal}) \emph{effect} of \(g\). Notice that \(\eff(1_x)=\id\) for every base point \(x\in X\) and that \(\eff(g'g)=\eff(g')\circ\eff(g)\) for every composable pair of arrows \((g',g)\in\varGamma\ftimes st\varGamma\). Both identities are an immediate consequence of the definitions.

\begin{lem}\label{lem:12B.16.2} For any base point\/ \(x\in X\), the correspondence that to each arrow\/ \(g\in\varGamma^x_x\) associates its effect\/ \(\eff(g)\) gives rise to a differentiable group homomorphism of\/ \(\varGamma^x_x\) into\/ \(\GL(\ttangent x\varGamma)\) which shall be indicated by\/ \(\eff_x\). \end{lem}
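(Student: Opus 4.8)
The plan is to separate the algebraic content, which is immediate, from the differentiability, which is where the actual work lies. For the former: if $g\in\varGamma^x_x$ then $sg=tg=x$, so $\eff(g)$ is, by construction, a linear endomorphism of $\ttangent x\varGamma$; the two identities recorded just before the statement, $\eff(1_x)=\id$ and $\eff(g'g)=\eff(g')\circ\eff(g)$, applied with $g'=g^{-1}$ give $\eff(g)\circ\eff(g^{-1})=\eff(g^{-1})\circ\eff(g)=\id$, so $\eff(g)\in\GL(\ttangent x\varGamma)$, and the multiplicativity identity itself says that $\eff_x\colon\varGamma^x_x\to\GL(\ttangent x\varGamma)$ is a group homomorphism. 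Everything thus reduces to showing that $\eff_x$ is differentiable.

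Since this is a local question, I would fix an arbitrary $g_0\in\varGamma^x_x$ and exhibit a neighbourhood of $g_0$ in $\varGamma^x_x$ on which $\eff_x$ is differentiable. The crucial preliminary step is to construct a differentiable family of local source sections through all arrows near $g_0$. Using that $s\colon\varGamma\onto X$ is a submersion, I would pick an open neighbourhood $W_0$ of $g_0$ in $\varGamma$, an open neighbourhood $V$ of $x$ in $X$, and a diffeomorphism $W_0\cong V\times W$, where $W:=s^{-1}(x)\cap W_0$ is an open neighbourhood of $g_0$ in the source fibre $\varGamma^x$, such that under this diffeomorphism $s$ becomes the first projection and the fibre of the projection over $x$ is identified with $W$ itself (the latter normalization obtained by composing with a diffeomorphism $\mathrm{id}_V\times\theta$, $\theta$ a diffeomorphism of $W$). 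Writing $\gamma\colon V\times W\to\varGamma$ for the corresponding differentiable map, each $\gamma_h:=\gamma(-,h)\colon V\to\varGamma$ is then a local differentiable section of $s$ with $\gamma_h(x)=h$, depending differentiably on $h$.

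Now $W\cap\varGamma^x_x=W\cap t^{-1}(x)$ is an open neighbourhood of $g_0$ in $\varGamma^x_x$, and for $h$ in this set Lemma~\ref{lem:12B.16.1} identifies $\eff(h)=\eff^{\gamma_h}_x$ with the endomorphism that $T_x(t\circ\gamma_h)\colon T_xX\to T_xX$ induces on the quotient $\ttangent x\varGamma=T_xX/\ltangent x\varGamma$; note that $T_x(t\circ\gamma_h)$ does carry $\ltangent x\varGamma$ into itself, because $t(\gamma_h(x))=t(h)=x$, by the discussion preceding Lemma~\ref{lem:12B.16.1}. The map $(v,h)\mapsto t(\gamma(v,h))$ is differentiable on $V\times(W\cap\varGamma^x_x)$ and sends $(x,h)$ to $x$; reading it through a chart of $X$ centred at $x$ and taking the partial derivative in the $V$-variable at $v=x$ shows that $h\mapsto T_x(t\circ\gamma_h)$ is differentiable from $W\cap\varGamma^x_x$ into $\mathrm{End}(T_xX)$. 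Post-composing with the linear, hence differentiable, ``pushforward'' map $\mathrm{End}(T_xX)\to\mathrm{End}(\ttangent x\varGamma)$ — which is well defined and differentiable on the endomorphisms preserving $\ltangent x\varGamma$, being, for instance, the restriction of the linear map induced by any choice of complement to $\ltangent x\varGamma$ — I conclude that $\eff_x$ is differentiable into $\mathrm{End}(\ttangent x\varGamma)$ on $W\cap\varGamma^x_x$, and therefore, since its values lie in the open subset $\GL(\ttangent x\varGamma)$, differentiable into $\GL(\ttangent x\varGamma)$ there. As $g_0$ was arbitrary, this completes the argument.

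The only non-formal ingredient is the submersion-chart construction of the differentiable section family $(\gamma_h)$; everything else is bookkeeping with the definitions and elementary linear algebra. The single point that requires a little care is the passage to the quotient by $\ltangent x\varGamma$, and this is harmless here precisely because, for $h$ in the isotropy group, the domain and codomain of $T_x(t\circ\gamma_h)$ are one and the same fixed space $T_xX$, so no variation of the ambient spaces intervenes.
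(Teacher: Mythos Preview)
Your proof is correct and follows essentially the same approach as the paper's. Both arguments reduce to differentiability, produce a smooth family of local source sections via a submersion chart for $s$ around $g_0$, observe that the partial derivative in the base variable of $t$ composed with this family is a $C^\infty$ map into $\End(T_xX)$, and then pass to the quotient by the longitudinal subspace; the paper does the last step concretely by choosing an adapted basis and reading off the top-left $r\times r$ minor, while you phrase it as post-composing with the linear quotient map, but these are the same operation.
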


\begin{proof} There is only to check the differentiability of the correspondence \(\eff_x\). Put \(G=\varGamma^x_x\) for brevity. Consider an arbitrary local \(C^\infty\) source trivialization \(\varphi:\varOmega\approxto U\times\R^n\emphpunct,g\mapsto\varphi(g)=\bigl(sg,f(g)\bigr)\) [where \(\varOmega\subset\varGamma\) and \(U\subset X\) are open subsets]. Put \(A=G\cap\varOmega\) and let \(\psi:A\times U\to X\) be the map given by \(\psi(a,u)=t\varphi^{-1}\bigl(u,f(a)\bigr)\). Clearly \(\psi\) is of class \(C^\infty\) and the same is true of its second partial \(\D_2\psi(-,x):A\times T_xX\to T_xX\). From the definitions it follows that if we fix any vec\-tor-space basis \(\varv_1,\dotsc,\varv_m\) of \(T_xX\) so that the last \(m-r\) vectors span the longitudinal subspace \(\ltangent x\varGamma\) then the top left \(r\times r\)~minor in the matrix representing \(\D_2\psi(a,x)\in\End(T_xX)\) will be the matrix representing the effect of \(a\) relative to the induced basis \(\bar\varv_1,\dotsc,\bar\varv_r\) of the transversal tangent space \(\ttangent x\varGamma\). \end{proof}

We shall refer to the closed subgroup
\begin{equation*}
 \nef\varGamma^x_x:=\ker\bigl(\eff_x:\varGamma^x_x\to\GL(\ttangent x\varGamma)\bigr)\subset\varGamma^x_x
\end{equation*}
as the \emph{ineffective} isotropy group of \(\varGamma\) at \(x\).

We proceed to study the behavior of ineffective isotropy under homomorphisms. Let \(\phi:\varGamma\to\varDelta\) be an arbitrary homomorphism of differentiable groupoids. Let \(X\) denote the base manifold of \(\varGamma\) and \(Y\) that of \(\varDelta\). Also let \(f:X\to Y\) denote the base map induced by \(\phi\). For each base point \(x\in X\) we have a map \(O^\phi_x:O^\varGamma_x\to O^\varDelta_{f(x)}\) characterized by the equation%
\begin{subequations}
\label{eqn:12B.3.6}
\begin{equation}
\label{eqn:12B.3.6a}
 O^\phi_x\circ\pr^\varGamma_x=\pr^\varDelta_{f(x)}\circ\phi^x\textpunct,
\end{equation}
where \(\phi^x:\varGamma^x\to\varDelta^{f(x)}\) indicates the map induced by \(\phi\) between the source fibers. This map is necessarily differentiable (because \(\pr^\varGamma_x\) is a surjective submersion and therefore admits local sections). Alternatively, \(O^\phi_x\) can be characterized through the equation
\begin{equation}
\label{eqn:12B.3.6b}
 \incl^\varDelta_{f(x)}\circ O^\phi_x=f\circ\incl^\varGamma_x\textpunct.
\end{equation}
\end{subequations}
If we differentiate the latter equation at \([x]\in O^\varGamma_x\), we obtain the identity
\begin{equation*}
 T_{[f(x)]}\incl^\varDelta_{f(x)}\circ T_{[x]}O^\phi_x=T_xf\circ T_{[x]}\incl^\varGamma_x\textpunct,
\end{equation*}
which makes it evident that the tangent linear map \(T_xf:T_xX\to T_{f(x)}Y\) carries the longitudinal subspace \(\ltangent x\varGamma\subset T_xX\) into the longitudinal subspace \(\ltangent{f(x)}\varDelta\subset T_{f(x)}Y\) and hence yields a well-de\-fined linear map of the transversal tangent space \(\ttangent x\varGamma\) into the transversal tangent space \(\ttangent{f(x)}\varDelta\); this map shall be denoted by \(\ttangent x\phi\) hereafter.

\begin{lem}\label{lem:12B.16.3} The linear map\/ \(\ttangent x\phi:\ttangent x\varGamma\to\ttangent{f(x)}\varDelta\) intertwines the group representations\/ \(\eff^\varGamma_x:\varGamma^x_x\to\GL(\ttangent x\varGamma)\) and\/ \(\eff^\varDelta_{f(x)}:\varDelta^{f(x)}_{f(x)}\to\GL(\ttangent{f(x)}\varDelta)\) via the homomorphism\/ \(\phi^x_x:\varGamma^x_x\to\varDelta^{f(x)}_{f(x)}\). More explicitly, the following identity holds for every arrow\/ \(g\in\varGamma^x_x\):
\begin{equation}
\label{eqn:12B.16.6}
 \ttangent x\phi\circ\eff^\varGamma_x(g)=\eff^\varDelta_{f(x)}\bigl(\phi(g)\bigr)\circ\ttangent x\phi\textpunct.
\end{equation} \end{lem}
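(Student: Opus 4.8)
The plan is to evaluate both effects appearing in~\eqref{eqn:12B.16.6} by means of local source sections and then to deduce the asserted identity from a congruence, modulo the longitudinal subspace \(\ltangent{f(x)}\varDelta\), between two ambient tangent maps. Fix \(g\in\varGamma^x_x\) and pick a local differentiable section \(\gamma\colon U\emto\varGamma\) to \(s^\varGamma\) through \(g=\gamma(x)\), so that, by the definition of the effect, \(\eff^\varGamma_x(g)\) is the endomorphism of \(\ttangent x\varGamma\) induced by \(T_x(t^\varGamma\circ\gamma)\colon T_xX\to T_xX\). Likewise pick a local differentiable section \(\delta\colon V\emto\varDelta\) to \(s^\varDelta\) through \(\phi(g)=\delta(f(x))\), so that \(\eff^\varDelta_{f(x)}(\phi(g))\) is induced by \(T_{f(x)}(t^\varDelta\circ\delta)\colon T_{f(x)}Y\to T_{f(x)}Y\). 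Since \(\ttangent x\phi\) is induced by \(T_xf\), both sides of~\eqref{eqn:12B.16.6}, precomposed with the quotient projection \(T_xX\onto\ttangent x\varGamma\), become the reductions modulo \(\ltangent{f(x)}\varDelta\) of \(T_xf\circ T_x(t^\varGamma\circ\gamma)\) and of \(T_{f(x)}(t^\varDelta\circ\delta)\circ T_xf\); as that projection is onto, it therefore suffices to show that these two linear maps \(T_xX\to T_{f(x)}Y\) agree modulo \(\ltangent{f(x)}\varDelta\).

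To compare them, shrink \(U\) so that \(f(U)\subset V\) and define \(\kappa\colon U\to\varDelta\) by \(\kappa(u)=\phi(\gamma(u))\cdot\delta(f(u))^{-1}\). The product is legitimate because \(\gamma\) and \(\delta\) are source sections, whence \(s^\varDelta(\phi(\gamma(u)))=f(u)=t^\varDelta(\delta(f(u))^{-1})\), and \(\kappa\) is differentiable since \(\phi\) is a differentiable functor. By construction \(t^\varDelta\circ\kappa=f\circ t^\varGamma\circ\gamma\) and \(s^\varDelta\circ\kappa=t^\varDelta\circ\delta\circ f\), while \(\kappa(x)=\phi(g)\cdot\phi(g)^{-1}=1_{f(x)}\) because \(\phi(g)\) is isotropic at \(f(x)\). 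Differentiating at \(x\) then converts the congruence to be proved into the statement that \(T_x(t^\varDelta\circ\kappa)-T_x(s^\varDelta\circ\kappa)\) takes values in \(\ltangent{f(x)}\varDelta\).

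The crux is thus a purely local fact: if a differentiable map \(\kappa\) into a differentiable groupoid sends a point to a \emph{unit} arrow, then at that point \(T(t\circ\kappa)-T(s\circ\kappa)\) is longitudinal. Writing \(y=f(x)\) and using that \(s^\varDelta\) is submersive whereas \(u^\varDelta\) is a common differentiable section of \(s^\varDelta\) and \(t^\varDelta\), the tangent space \(T_{1_y}\varDelta\) splits as \(\im(T_yu^\varDelta)\oplus\ker(T_{1_y}s^\varDelta)\); accordingly one may write \(T_x\kappa=T_yu^\varDelta\circ T_x(s^\varDelta\circ\kappa)+\beta\) with \(\beta\) taking values in \(\ker(T_{1_y}s^\varDelta)\), and applying \(T_{1_y}t^\varDelta\) together with \(t^\varDelta\circ u^\varDelta=\id\) gives \(T_x(t^\varDelta\circ\kappa)-T_x(s^\varDelta\circ\kappa)=T_{1_y}t^\varDelta\circ\beta\). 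It now suffices to recall the observation made in the proof of Lemma~\ref{lem:12B.16.1} (an immediate consequence of~\eqref{eqn:12B.16.2}) that for any arrow \(h\) the map \(T_ht^\varDelta\) carries \(\ker T_hs^\varDelta\) into \(\ltangent{th}\varDelta\); with \(h=1_y\) this yields \(T_{1_y}t^\varDelta(\ker T_{1_y}s^\varDelta)\subset\ltangent y\varDelta\), so \(T_x(t^\varDelta\circ\kappa)-T_x(s^\varDelta\circ\kappa)\) is longitudinal and the lemma follows.

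I expect the opening reduction to be the main obstacle: since \(f\) need not be submersive, or even open, the composite \(\phi\circ\gamma\) is in general \emph{not} a source section of \(\varDelta\), so \(\eff^\varDelta_{f(x)}(\phi(g))\) cannot be read off from it directly. The correction factor \(\delta\circ f\), together with the passage through the unit arrow \(\kappa(x)=1_{f(x)}\), is precisely the device that overcomes this; once the problem has been brought to that form, what remains is a routine unwinding of the definitions of the transversal tangent spaces and of the induced maps \(\ttangent x\phi\), \(\eff^\varGamma_x(g)\) and \(\eff^\varDelta_{f(x)}(\phi(g))\).
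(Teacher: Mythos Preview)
Your proof is correct. The overall strategy coincides with the paper's: pick source sections \(\gamma\) through \(g\) and \(\delta\) through \(h:=\phi(g)\), and show that the two ambient linear maps \(T_xf\circ T_x(t^\varGamma\circ\gamma)\) and \(T_y(t^\varDelta\circ\delta)\circ T_xf\) (with \(y=f(x)\)) differ by something landing in \(\ltangent y\varDelta\), the key input in both cases being that \(T_\bullet t^\varDelta\) carries \(\ker T_\bullet s^\varDelta\) into the longitudinal subspace.

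The packaging, however, differs. The paper stays at the arrow \(h\): it writes the difference directly as \(T_ht^\varDelta\circ\bigl(T_x(\phi\circ\gamma)-T_x(\delta\circ f)\bigr)\) and checks in one line that the parenthesized map is \(s^\varDelta\)-vertical, since \(s^\varDelta\circ\phi\circ\gamma=f=s^\varDelta\circ\delta\circ f\). You instead form the groupoid-theoretic ``quotient'' \(\kappa=(\phi\circ\gamma)\cdot(\delta\circ f)^{-1}\), which sends \(x\) to the unit \(1_y\), and then use the splitting \(T_{1_y}\varDelta=\im T_yu^\varDelta\oplus\ker T_{1_y}s^\varDelta\) to isolate the longitudinal part of \(T_x(t^\varDelta\circ\kappa)-T_x(s^\varDelta\circ\kappa)\). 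Your route is a little longer but has the pleasant feature of reducing the computation to a clean general statement about maps hitting a unit; the paper's route avoids the auxiliary map and the splitting at the cost of working at the (non-unit) arrow \(h\). Either way the same fact from~\eqref{eqn:12B.16.2} closes the argument.
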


\begin{proof} Let us set \(y=f(x)\) and \(h=\phi(g)\) for brevity. Let \(\gamma\) be any local source section of class \(C^\infty\) through \(g\) and let \(\delta\) be any similar section through \(h\). We have
\begin{align*}
 T_xf\circ T_x(t^\varGamma\circ\gamma)-T_y(t^\varDelta\circ\delta)\circ T_xf
&=T_x(f\circ t^\varGamma\circ\gamma)-T_x(t^\varDelta\circ\delta\circ f)
\\%
&=T_x(t^\varDelta\circ\phi\circ\gamma)-T_x(t^\varDelta\circ\delta\circ f)
\\%
&=T_ht^\varDelta\circ\bigl(T_x(\phi\circ\gamma)-T_x(\delta\circ f)\bigr)\textpunct.
\end{align*}
The difference \(T_x(\phi\circ\gamma)-T_x(\delta\circ f):T_xX\to T_h\varDelta\) is a linear map taking values in the \(s^\varDelta\)\mdash vertical subspace \(\ker T_hs^\varDelta\subset T_h\varDelta\), because
\begin{align*}
 T_hs^\varDelta\circ\bigl(T_x(\phi\circ\gamma)-T_x(\delta\circ f)\bigr)
&=T_x(s^\varDelta\circ\phi\circ\gamma)-T_x(s^\varDelta\circ\delta\circ f)
\\%
&=T_x(f\circ s^\varGamma\circ\gamma)-T_x(\id\circ f)
\\%
&=T_xf-T_xf=0\textpunct.
\end{align*}
The identity~\eqref{eqn:12B.16.2} implies that \(T_ht^\varDelta\) carries \(\ker T_hs^\varDelta\) into \(\ltangent y\varDelta\), whence \eqref{eqn:12B.16.6}. \end{proof}

\begin{prop}\label{cor:12B.16.4} The following implications hold for any homomorphism of differentiable groupoids\/ \(\phi:\varGamma\to\varDelta\) for every base point\/ \(x\) of\/ \(\varGamma\).
\begin{enumerate} \def\labelenumi{\upshape(\alph{enumi})} \itemsep=0pt
 \item If\/ \(\ttangent x\phi\) is surjective then\/ \(\phi(\nef\varGamma^x_x)\subset\nef\varDelta^{\phi x}_{\phi x}\).
 \item If\/ \(\ttangent x\phi\) is injective then\/ \(\phi^{-1}(\nef\varDelta^{\phi x}_{\phi x})\cap\varGamma^x_x\subset\nef\varGamma^x_x\).
 \item If\/ \(\ttangent x\phi\) is bijective then, for all\/ \(g\in\varGamma^x_x\), \(g\in\nef\varGamma^x_x\aeq\phi(g)\in\nef\varDelta^{\phi x}_{\phi x}\).
\end{enumerate} \end{prop}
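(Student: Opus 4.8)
The plan is to reduce everything to Lemma~\ref{lem:12B.16.3}, whose central identity~\eqref{eqn:12B.16.6} says that for each \(g\in\varGamma^x_x\) the square
\begin{equation*}
 \ttangent x\phi\circ\eff^\varGamma_x(g)=\eff^\varDelta_{\phi x}\bigl(\phi(g)\bigr)\circ\ttangent x\phi
\end{equation*}
commutes, where \(\ttangent x\phi:\ttangent x\varGamma\to\ttangent{\phi x}\varDelta\) is the induced linear map on transversal tangent spaces. Recall that, by definition, \(g\in\nef\varGamma^x_x\) means \(\eff^\varGamma_x(g)=\id\) and \(\phi(g)\in\nef\varDelta^{\phi x}_{\phi x}\) means \(\eff^\varDelta_{\phi x}(\phi(g))=\id\). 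So each of (a), (b), (c) is just an elementary statement about when the equation \(\ttangent x\phi\circ A=B\circ\ttangent x\phi\) (with \(A=\eff^\varGamma_x(g)\), \(B=\eff^\varDelta_{\phi x}(\phi(g))\), both invertible) forces \(A=\id\) given \(B=\id\), or \(B=\id\) given \(A=\id\), under surjectivity/injectivity hypotheses on the intertwiner \(\ttangent x\phi\).

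For part~(a), assume \(\ttangent x\phi\) is surjective and let \(g\in\nef\varGamma^x_x\), so \(\eff^\varGamma_x(g)=\id\). Then~\eqref{eqn:12B.16.6} gives \(\ttangent x\phi=\eff^\varDelta_{\phi x}(\phi(g))\circ\ttangent x\phi\), i.e. \(\eff^\varDelta_{\phi x}(\phi(g))\) fixes every vector in the image of \(\ttangent x\phi\); since that image is all of \(\ttangent{\phi x}\varDelta\), we get \(\eff^\varDelta_{\phi x}(\phi(g))=\id\), that is \(\phi(g)\in\nef\varDelta^{\phi x}_{\phi x}\). For part~(b), assume \(\ttangent x\phi\) is injective and let \(g\in\varGamma^x_x\) with \(\phi(g)\in\nef\varDelta^{\phi x}_{\phi x}\), so \(\eff^\varDelta_{\phi x}(\phi(g))=\id\); now~\eqref{eqn:12B.16.6} reads \(\ttangent x\phi\circ\eff^\varGamma_x(g)=\ttangent x\phi\), and injectivity of \(\ttangent x\phi\) (cancel it on the left) yields \(\eff^\varGamma_x(g)=\id\), i.e. \(g\in\nef\varGamma^x_x\). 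Part~(c) is then immediate: if \(\ttangent x\phi\) is bijective, it is both surjective and injective, so (a) gives the forward implication \(g\in\nef\varGamma^x_x\implies\phi(g)\in\nef\varDelta^{\phi x}_{\phi x}\) and (b) gives the reverse, hence the asserted equivalence for all \(g\in\varGamma^x_x\).

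There is really no serious obstacle here: the proposition is a formal corollary of the intertwining relation already established, and the only thing to be careful about is the direction in which one cancels \(\ttangent x\phi\) — surjectivity lets one conclude an operator is the identity from the fact that it fixes the image, whereas injectivity lets one cancel the map on the left of a composition. Everything is finite-dimensional linear algebra once~\eqref{eqn:12B.16.6} is in hand, and no second-countability or smoothness input is needed beyond what Lemma~\ref{lem:12B.16.3} already assumes. \qed
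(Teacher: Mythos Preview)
Your proof is correct and follows exactly the same approach as the paper: both derive (a) and (b) as immediate consequences of the intertwining identity~\eqref{eqn:12B.16.6} from Lemma~\ref{lem:12B.16.3}, and then obtain (c) by combining them. The paper's proof is in fact terser than yours, simply noting that (a) and (b) are ``straightforward consequences'' of~\eqref{eqn:12B.16.6} and that (c) follows from these.
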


\begin{proof} The last assertion follows from the other two, which in turn are straightforward consequences of the identity~\eqref{eqn:12B.16.6}. \end{proof}

We shall call a homomorphism of differentiable groupoids \(\phi:\varGamma\to\varDelta\) \emph{transversal} whenever the following map is a submersion (notations as above).
\begin{equation}
\label{eqn:12B.16.7}
 s^\varDelta\circ\pr_2:X\ftimes ft\varDelta\to Y
\end{equation}

\begin{prop}\label{prop:12B.16.5} Let\/ \(\phi:\varGamma\to\varDelta\) be a transversal homomorphism of differentiable groupoids. Then for each base point\/ \(x\) of\/ \(\varGamma\) the linear map\/ \(\ttangent x\phi:\ttangent x\varGamma\to\ttangent{\phi x}\varDelta\) is surjective. \end{prop}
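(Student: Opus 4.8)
The plan is to test the submersivity hypothesis on the map~\eqref{eqn:12B.16.7} at the single point \((x,1_y)\), where \(y:=f(x)=\phi x\), and to read off surjectivity of \(\ttangent x\phi\) from the surjectivity of a tangent linear map. Since \(\ttangent x\phi\) is by construction the linear map induced on transversal tangent spaces by \(T_xf:T_xX\to T_yY\), proving it surjective amounts to proving that \(\im(T_xf)+\ltangent y\varDelta=T_yY\).

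First I would observe that \((x,1_y)\) does lie in the fibered product \(X\ftimes ft\varDelta\), because \(f(x)=y=t^\varDelta(1_y)\), and that \(s^\varDelta\circ\pr_2\) sends it to \(s^\varDelta(1_y)=y\). Since \(s^\varDelta\) and \(t^\varDelta\) are submersive, the fibered product is a manifold with
\begin{equation*}
 T_{(x,1_y)}\bigl(X\ftimes ft\varDelta\bigr)=\bigl\{(v,w)\in T_xX\times T_{1_y}\varDelta:T_xf(v)=T_{1_y}t^\varDelta(w)\bigr\}\textpunct,
\end{equation*}
and the tangent linear map of~\eqref{eqn:12B.16.7} at \((x,1_y)\) is simply \((v,w)\mapsto T_{1_y}s^\varDelta(w)\). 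By transversality of \(\phi\), this map is onto \(T_yY\).

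Then, given an arbitrary \(\xi\in T_yY\), I would pick \((v,w)\) in the tangent space above with \(T_{1_y}s^\varDelta(w)=\xi\). Because the unit section splits the source map, i.e.\ \(T_{1_y}s^\varDelta\circ T_yu^\varDelta=\id\), the vector \(w_0:=w-T_yu^\varDelta(\xi)\) lies in \(\ker T_{1_y}s^\varDelta=T_{1_y}\varDelta^y\), the tangent space at \(1_y\) to the source fiber. Applying \(T_{1_y}t^\varDelta\) and using \(t^\varDelta\circ u^\varDelta=\id\) gives \(T_xf(v)=T_{1_y}t^\varDelta(w)=T_{1_y}t^\varDelta(w_0)+\xi\). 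Finally, \(w_0\) being tangent to \(\varDelta^y\) and the restriction of \(t^\varDelta\) to \(\varDelta^y\) factoring as \(\incl^\varDelta_y\circ\pr^\varDelta_y\), the vector \(T_{1_y}t^\varDelta(w_0)\) belongs to \(\im(T_{[y]}\incl^\varDelta_y)=\ltangent y\varDelta\); hence \(\xi=T_xf(v)-T_{1_y}t^\varDelta(w_0)\in\im(T_xf)+\ltangent y\varDelta\), which is exactly what was needed.

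I do not anticipate a serious obstacle. The two points that need care are the explicit description of the tangent space of \(X\ftimes ft\varDelta\) — legitimate precisely because \(t^\varDelta\) is a submersion — and the correct identification of the differential of \(s^\varDelta\circ\pr_2\); everything else is linear algebra built on the splitting furnished by the unit section \(u^\varDelta\), together with the elementary fact, already used in the proof of Lemma~\ref{lem:12B.16.1}, that the target map carries vectors tangent to a source fiber into the longitudinal subspace.
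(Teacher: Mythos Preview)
Your proof is correct and follows essentially the same approach as the paper's: both evaluate the submersivity of~\eqref{eqn:12B.16.7} at the point \((x,1_y)\) and reduce the claim to the identity \(\im(T_xf)+\ltangent y\varDelta=T_yY\), using that \(T_{1_y}t^\varDelta\) sends \(s^\varDelta\)\mdash vertical vectors into the longitudinal subspace. The only cosmetic difference is that the paper lifts via a local \(C^\infty\) section \((a,\delta)\) to the submersion (which it then reuses in the proof of the next proposition), whereas you work directly at the tangent level and split with the unit section \(u^\varDelta\).
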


\begin{proof} As before, let \(f:X\to Y\) denote the map induced by \(\phi\) on the groupoid bases. Also set \(y=f(x)\). By the transversality hypothesis on \(\phi\), over some open neighborhood \(V\) of \(y\) in \(Y\) it will be possible to find a \(C^\infty\) section through \((x,1_y)\in X\ftimes ft\varDelta\) to the map~\eqref{eqn:12B.16.7}. This section will be of the form \((a,\delta)\) with \(a:V\to X\) a \(C^\infty\) map and \(\delta:V\to\varDelta\) a differentiable \(s^\varDelta\)\mdash section through \(1_y\) such that \(t^\varDelta\circ\delta=f\circ a\). The difference
\begin{equation*}
 T_xf\circ T_ya-\id_{T_yY}=T_y(t^\varDelta\circ\delta)-\id_{T_yY}
\end{equation*}
will be a linear map carrying \(T_yY\) into \(\ltangent y\varDelta\), because \(T_y(t^\varDelta\circ\delta)\) represents \(\eff^\varDelta_y(1_y)=\id\). The surjectivity of \(\ttangent x\phi\) is evident now. \end{proof}

Recall that a homomorphism of differentiable groupoids \(\phi:\varGamma\to\varDelta\) is said to be a \emph{weak equivalence} if the associated map~\eqref{eqn:12B.16.7} is a surjective submersion (so that in particular \(\phi\) is transversal) and the square diagram
\begin{equation}
\label{eqn:12B.16.8}
 \begin{split}
\xymatrix{%
 \varGamma
 \ar[d]^{(s,t)}
 \ar[r]^\phi
 &
	\varDelta
	\ar[d]^{(s,t)}
\\
 X\times X
 \ar[r]^{f\times f}
 &
	Y\times Y
}\end{split}
\end{equation}
(in the above notations) is a pullback within the category of differentiable manifolds.

\begin{prop}\label{prop:12B.16.6} Let\/ \(\phi:\varGamma\to\varDelta\) be a weak equivalence of differentiable groupoids. Then for each base point\/ \(x\) of\/ \(\varGamma\) the linear map\/ \(\ttangent x\phi:\ttangent x\varGamma\to\ttangent{\phi x}\varDelta\) is bijective. \end{prop}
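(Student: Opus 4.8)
The plan is to reduce the statement to Proposition~\ref{cor:12B.16.4}(c) by showing that a weak equivalence is, in particular, transversal (which gives surjectivity of $\ttangent x\phi$ via Proposition~\ref{prop:12B.16.5}) and then to establish injectivity of $\ttangent x\phi$ separately, exploiting the pullback condition~\eqref{eqn:12B.16.8}. Surjectivity is essentially free: by definition a weak equivalence has the map~\eqref{eqn:12B.16.7} a surjective submersion, hence transversal, so Proposition~\ref{prop:12B.16.5} applies verbatim. The whole content is therefore in the injectivity of $\ttangent x\phi$.

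For injectivity, fix a base point $x$ of $\varGamma$, set $y=f(x)$, and suppose $v\in T_xX$ has $T_xf(v)\in\ltangent y\varDelta$; I must show $v\in\ltangent x\varGamma$. The idea is to use the pullback square to ``lift'' the orbit direction in $\varDelta$ back to $\varGamma$. First I would recall that $\ltangent y\varDelta=\im(T_{[1_y]}\incl^\varDelta_y)$, and, using that $\incl^\varDelta_y\circ\pr^\varDelta_y=t^y$ together with local sections of the orbit projection, realize $T_xf(v)$ as $T_{1_y}t^\varDelta(w)$ for some $w\in\ker T_{1_y}s^\varDelta\subset T_{1_y}\varDelta$ (a source-vertical vector at the unit $1_y$). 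Now the pullback property of~\eqref{eqn:12B.16.8}, read at the level of tangent spaces at the point $1_x\in\varGamma$ lying over $1_y$ — here I would use that $\phi$ covers $f$ and that $(s^\varGamma,t^\varGamma)$ and $(s^\varDelta,t^\varDelta)$ fit in the pullback — allows me to produce a tangent vector $w'\in T_{1_x}\varGamma$ with $T_{1_x}\phi(w')=w$ and $(T_{1_x}s^\varGamma(w'),T_{1_x}t^\varGamma(w'))=(0,v)$ in $T_xX\times T_xX$; the first coordinate being $0$ matches $T_{1_y}s^\varDelta(w)=0$ and uses $T_xf(0)=0$, while the second matches $T_{1_y}t^\varDelta(w)=T_xf(v)$ against $T_xf(v)$. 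In particular $w'\in\ker T_{1_x}s^\varGamma$, and by the identity~\eqref{eqn:12B.16.2} (the case $g=1_x$) the map $T_{1_x}t^\varGamma$ sends $\ker T_{1_x}s^\varGamma$ into $\ltangent x\varGamma$; hence $v=T_{1_x}t^\varGamma(w')\in\ltangent x\varGamma$, as required. Thus $\ttangent x\phi$ is injective, and combined with surjectivity it is bijective.

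One subtlety I would be careful about is that the pullback in~\eqref{eqn:12B.16.8} is a pullback \emph{of differentiable manifolds}, so a priori it only guarantees a tangent-space pullback after checking that the relevant maps are transverse or that the fibre product is ``clean''; the cleanest route is to use the weak-equivalence hypothesis to note that $\phi$ is (locally) a surjective submersion onto the fibre product $X\times_{Y\times Y}\varDelta$ — equivalently, that $\varGamma\to X\times_{Y\times Y}\varDelta$ is an isomorphism of differentiable groupoids — so that the induced map on tangent spaces is genuinely the pullback of the tangent diagram. Alternatively one can bypass tangent-level pullback arguments entirely and argue with curves: given $v$ as above, pick a curve in $\varDelta^{1_y}$ (the source fibre through $1_y$) whose $t$-derivative is $T_xf(v)$ and whose $\pr^\varDelta$-image traces the orbit, then lift this curve through the surjective submersion $\phi$ (after adjusting by a curve in the source fibre, permissible by transversality) to a curve in $\varGamma^{1_x}$, and differentiate. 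I expect this lifting step — making precise that a weak equivalence lets one lift the orbit direction of $\varDelta$ at $y$ to the orbit direction of $\varGamma$ at $x$ — to be the main obstacle; everything else is a direct diagram chase on tangent spaces using~\eqref{eqn:12B.16.2} and the definitions of $\ltangent{}{}$ and $\ttangent{}{}$.
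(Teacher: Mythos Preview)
Your argument is correct in outline, but it takes a different route from the paper's and leaves the delicate step exactly where you flag it. The paper does not invoke a tangent-level pullback at all. Instead, it reuses the local section \((a,\delta):V\to X\ftimes ft\varDelta\) through \((x,1_y)\) from the proof of Proposition~\ref{prop:12B.16.5} and builds an explicit two-sided inverse \(\alpha\) to \(\ttangent x\phi\), induced by \(T_ya\). That \(\alpha\) is a right inverse comes from the earlier computation; that it is also a left inverse is obtained by applying the \emph{map-level} universal property of the pullback square~\eqref{eqn:12B.16.8} to the smooth maps \(\delta\circ f_U\) and \((\id,a\circ f_U)\), producing a local \(s^\varGamma\)-section \(\gamma\) through \(1_x\) with \(t\circ\gamma=a\circ f_U\); then \(T_ya\circ T_xf=T_x(t\circ\gamma)\) represents \(\eff(1_x)=\id\). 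Similarly, the fact that \(T_ya\) carries \(\ltangent y\varDelta\) into \(\ltangent x\varGamma\) is shown by using local sections of \(\pr^\varDelta_y\) and the map-level pullback property to manufacture a smooth map \(O^\varDelta_y\supset\incl_y^{-1}(V)\to O^\varGamma_x\) over \(a\). So the paper systematically avoids the question of whether tangent spaces of a pullback of manifolds form a pullback of vector spaces.

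Your approach buys conceptual directness---chase the kernel of \(\ttangent x\phi\)---at the cost of that very question, which you rightly identify as the obstacle. Two small corrections to your workaround paragraph: the map you want to be a diffeomorphism is \(\varGamma\to(X\times X)\ftimes{f\times f}{(s,t)}\varDelta\), an isomorphism of differentiable \emph{manifolds}, not groupoids; and \(\phi\) itself is not in general a surjective submersion, so ``lift this curve through the surjective submersion \(\phi\)'' is not available as stated---what is submersive is the map~\eqref{eqn:12B.16.7}, and that (together with the map-level pullback property) is precisely what the paper exploits.
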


\begin{proof} Let \(f\), \(y\) and \((a,\delta):V\to X\ftimes ft\varDelta\) be as in the proof of the previous proposition. There is a (necessarily unique) \(C^\infty\) map from the open subset \(\incl_y^{-1}(V)\) of \(O_y\) into \(O_x\) whose composition with \(\incl_x\) coincides with \(\incl_y\sidetext[\text{restricted to }\incl_y^{-1}(V)]\) followed by \(a\); this can be seen by exploiting the availability of local \(C^\infty\) sections to the projection \(\pr_y:\varDelta^y\onto O_y\) together with the pullback universal property of the diagram~\eqref{eqn:12B.16.8}. The existence of such a map implies that \(T_ya:T_yY\to T_xX\) carries \(\ltangent y\varDelta\) into \(\ltangent x\varGamma\) and therefore induces a linear map of \(\ttangent y\varDelta\) into \(\ttangent x\varGamma\) which we may call say \(\alpha\). From the proof of Proposition~\ref{prop:12B.16.5} it follows that \(\alpha\) must be a right inverse for \(\ttangent x\phi\).

We contend that \(\alpha\) is also a left inverse for \(\ttangent x\phi\). To see this, choose any open neighborhood \(U\) of \(x\) in \(X\) so that \(f(U)\subset V\) and let \(f_U:U\to V\) denote the map induced by \(f\) by restriction. The pullback universal property of the diagram~\eqref{eqn:12B.16.8} applied to the maps \(\delta\circ f_U\) and \((\id,a\circ f_U)\) yields a local source section \(\gamma:U\to\varGamma\) of class \(C^\infty\) through \(1_x\) such that \(t\circ\gamma=a\circ f_U\). The composite linear map \(T_ya\circ T_xf=T_x(t\circ\gamma)\), thus, represents the trivial effect \(\eff(1_x)=\id\). \end{proof}

Combining the above with Proposition~\ref{cor:12B.16.4}(c), we obtain the following notable property of weak equivalences, which appears already in \cite{Tr4} as part of Lemma~4.2.

\begin{cor}\label{cor:advances} For any weak equivalence of differentiable groupoids\/ \(\phi:\varGamma\to\varDelta\) and for any base point\/ \(x\) of\/ \(\varGamma\), the group isomorphism\/ \(\phi^x_x:\varGamma^x_x\simto\varDelta^{\phi x}_{\phi x}\) establishes a bijection between the ineffective subgroup\/ \(\nef\varGamma^x_x\) of\/ \(\varGamma^x_x\) and the ineffective subgroup\/ \(\nef\varDelta^{\phi x}_{\phi x}\) of\/ \(\varDelta^{\phi x}_{\phi x}\). \end{cor}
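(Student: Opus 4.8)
The plan is simply to feed the output of Proposition~\ref{prop:12B.16.6} into Proposition~\ref{cor:12B.16.4}(c). Since $\phi$ is a weak equivalence, Proposition~\ref{prop:12B.16.6} guarantees that for every base point $x$ of $\varGamma$ the induced linear map $\ttangent x\phi\colon\ttangent x\varGamma\to\ttangent{\phi x}\varDelta$ is bijective. I would then invoke part~(c) of Proposition~\ref{cor:12B.16.4} for this homomorphism $\phi$ and this base point $x$: it yields, for every arrow $g\in\varGamma^x_x$, the equivalence that $g\in\nef\varGamma^x_x$ holds if and only if $\phi(g)\in\nef\varDelta^{\phi x}_{\phi x}$ holds.

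To turn this equivalence into the asserted \emph{bijection}, I would recall that for a weak equivalence the homomorphism $\phi^x_x\colon\varGamma^x_x\to\varDelta^{\phi x}_{\phi x}$ is a group isomorphism; this is the ``fully faithful'' half of the definition, and concretely it follows from the pullback condition~\eqref{eqn:12B.16.8}, which identifies $\varGamma$ with $(X\times X)\times_{Y\times Y}\varDelta$ and hence carries the fiber of $(s,t)$ over $(x,x)$ bijectively onto the fiber of $(s,t)$ over $(f(x),f(x))=(\phi x,\phi x)$, i.e.~onto $\varDelta^{\phi x}_{\phi x}$. Granted this, the equivalence of the previous paragraph says exactly that an element of $\varGamma^x_x$ lies in $\nef\varGamma^x_x$ precisely when its image under the isomorphism $\phi^x_x$ lies in $\nef\varDelta^{\phi x}_{\phi x}$; therefore $\phi^x_x$ sends $\nef\varGamma^x_x$ onto $\nef\varDelta^{\phi x}_{\phi x}$, and being injective it restricts to a bijection $\nef\varGamma^x_x\simto\nef\varDelta^{\phi x}_{\phi x}$, as claimed.

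There is no real obstacle here, since all the substance has already been isolated in Propositions~\ref{prop:12B.16.6} and~\ref{cor:12B.16.4}; the corollary is purely a matter of assembling them. The one point worth a word of care is that one needs $\phi^x_x$ to be \emph{surjective} onto $\varDelta^{\phi x}_{\phi x}$, not merely injective, in order to conclude that the image of $\nef\varGamma^x_x$ is all of $\nef\varDelta^{\phi x}_{\phi x}$ and not just a subgroup of it; this surjectivity is precisely where the pullback square~\eqref{eqn:12B.16.8} enters, as opposed to the weaker transversality hypothesis that already sufficed for Proposition~\ref{prop:12B.16.5}.
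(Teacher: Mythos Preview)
Your proof is correct and follows exactly the paper's approach: the corollary is obtained by combining Proposition~\ref{prop:12B.16.6} (bijectivity of \(\ttangent x\phi\) for weak equivalences) with Proposition~\ref{cor:12B.16.4}(c). Your added remark that the surjectivity of \(\phi^x_x\) comes from the pullback square~\eqref{eqn:12B.16.8} is a helpful clarification of something the paper leaves implicit in the phrase ``the group isomorphism \(\phi^x_x\)''.
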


We conclude the section with a remark about natural transformations which will be needed only later in Section~\ref{sec:4} on a single occasion. Recall that a natural transformation \(\tau\) between two homomorphisms of differentiable groupoids \(\phi,\psi:\varGamma\to\varDelta\), in mathematical notation `\(\tau:\phi\natisoto\psi\)', is a map of class \(C^\infty\) from the base \(X\) of \(\varGamma\) into the arrows of \(\varDelta\) which to each point \(x\in X\) assigns an arrow \(\tau(x)\in\varDelta(\phi x,\psi x)\) in such a way as to yield an ordinary natural transformation of (abstract) functors between \(\phi\) and \(\psi\).

\begin{lem}\label{lem:natural} Let\/ \(\tau:\phi\natisoto\psi\) be a natural transformation between two homomorphisms of differentiable groupoids\/ \(\phi,\psi:\varGamma\to\varDelta\). For every base point\/ \(x\) of\/ \(\varGamma\)
\begin{equation}
\label{eqn:natural}
 \eff\bigl(\tau(x)\bigr)\circ\ttangent x\phi=\ttangent x\psi\textpunct.
\end{equation} \end{lem}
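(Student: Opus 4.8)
The plan is to rewrite both sides of~\eqref{eqn:natural} in terms of the base maps $f=\phi_{(0)}$ and $f'=\psi_{(0)}$ and the tangent map of a suitable local source section of $\varDelta$ through $\tau(x)$, and then to reduce everything to a linear-algebra fact about the unit arrows of $\varDelta$ that is already contained in the proof of Lemma~\ref{lem:12B.16.1}. Fix the base point $x$ and set $y=\phi x=f(x)$ and $y'=\psi x=f'(x)$. Since $s^\varDelta$ is a submersion, I would first choose a $C^\infty$ source section $\delta$ of $\varDelta$, defined on a neighborhood of $y$ in the base of $\varDelta$, with $\delta(y)=\tau(x)$. By the very definition of the effect, $\eff\bigl(\tau(x)\bigr)$ is the linear map induced on transversal tangent spaces by $T_y(t^\varDelta\circ\delta)\colon T_yY\to T_{y'}Y$; consequently $\eff\bigl(\tau(x)\bigr)\circ\ttangent x\phi$ is the map induced by $T_y(t^\varDelta\circ\delta)\circ T_xf=T_xa$, where $a:=t^\varDelta\circ\delta\circ f$ is a $C^\infty$ map on some neighborhood $U$ of $x$. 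Since $\ttangent x\psi$ is, by definition, the map induced by $T_xf'$, proving~\eqref{eqn:natural} is the same as proving that the linear map $T_xf'-T_xa\colon T_xX\to T_{y'}Y$ takes its values in the longitudinal subspace $\ltangent{y'}\varDelta$.

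To establish this I would introduce the $C^\infty$ map $c\colon U\to\varDelta$ given by $c(x'):=\tau(x')\cdot\delta\bigl(f(x')\bigr)^{-1}$. The composite is legitimate for every $x'\in U$, because $\delta(f(x'))$ is an arrow with source $f(x')=\phi x'$ and target $a(x')$, while $\tau(x')$ has source $\phi x'$; hence $c(x')$ is an arrow from $a(x')$ to $\psi x'=f'(x')$. Thus $s^\varDelta\circ c=a$, $t^\varDelta\circ c=f'$, and $c(x)=\tau(x)\cdot\tau(x)^{-1}=1_{y'}$. Differentiating at $x$ and using the chain rule yields
\begin{equation*}
 T_xf'-T_xa=\bigl(T_{1_{y'}}t^\varDelta-T_{1_{y'}}s^\varDelta\bigr)\circ T_xc\textpunct,
\end{equation*}
so it suffices to check that the linear map $T_{1_{y'}}t^\varDelta-T_{1_{y'}}s^\varDelta\colon T_{1_{y'}}\varDelta\to T_{y'}Y$ has image contained in $\ltangent{y'}\varDelta$.

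For that last point I would split $T_{1_{y'}}\varDelta=\im(T_{y'}u^\varDelta)\oplus\ker T_{1_{y'}}s^\varDelta$, using that $T_{y'}u^\varDelta$ is a linear section of the surjection $T_{1_{y'}}s^\varDelta$. On the first summand both $T_{1_{y'}}t^\varDelta$ and $T_{1_{y'}}s^\varDelta$ restrict to the same isomorphism onto $T_{y'}Y$ (because $s^\varDelta\circ u^\varDelta=t^\varDelta\circ u^\varDelta=\id$), so their difference vanishes there; on the second summand the difference reduces to $T_{1_{y'}}t^\varDelta$ alone, and since $\ker T_{1_{y'}}s^\varDelta=T_{1_{y'}}\varDelta^{y'}$ while the restriction of $t^\varDelta$ to the source fiber $\varDelta^{y'}$ equals $\incl^\varDelta_{y'}\circ\pr^\varDelta_{y'}$, this restriction factors through $T_{[1_{y'}]}\incl^\varDelta_{y'}$ and hence lands in $\ltangent{y'}\varDelta$ — exactly the observation made, for a general arrow, inside the proof of Lemma~\ref{lem:12B.16.1} (one could equally well extract it from identity~\eqref{eqn:12B.16.2}). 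Combining the two cases, $T_xf'-T_xa$ takes its values in $\ltangent{y'}\varDelta$, which is what was needed. I do not expect a serious obstacle here: the only things requiring care are keeping sources and targets straight in the definition of $c$ and shrinking neighborhoods so that all maps are simultaneously defined; it is also worth noting that the naturality of $\tau$ on arrows of $\varGamma$ is never actually used — only that $\tau$ is a $C^\infty$ map assigning to each $x$ an arrow $\tau(x)\in\varDelta(\phi x,\psi x)$.
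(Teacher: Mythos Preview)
Your proof is correct and rests on the same mechanism as the paper's: one shows that the difference of the two representative linear maps on $T_xX$ factors through $\ker T_hs^\varDelta$ for some arrow $h$, and then invokes the fact (from the proof of Lemma~\ref{lem:12B.16.1}, equivalently~\eqref{eqn:12B.16.2}) that $T_ht^\varDelta$ carries this kernel into the longitudinal subspace.

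The paper's argument is a bit more direct, however. It observes that $\tau$ itself already satisfies $s^\varDelta\circ\tau=f$ and $t^\varDelta\circ\tau=f'$, so one may work at the arrow $h=\tau(x)$ rather than translating to the unit: writing
\[
 T_{\phi x}(t\circ\delta)\circ T_x\phi-T_x\psi
 \;=\;T_{\tau(x)}t^\varDelta\circ\bigl(T_{\phi x}\delta\circ T_x\phi-T_x\tau\bigr),
\]
the parenthesized map lands in $\ker T_{\tau(x)}s^\varDelta$ because both $\delta\circ f$ and $\tau$ are lifts of $f$ along $s^\varDelta$. This bypasses your auxiliary map $c$ and the splitting of $T_{1_{y'}}\varDelta$ entirely. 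Your route buys nothing extra here, but it is a perfectly valid variant; your closing remark that the naturality of $\tau$ on arrows is never used is also correct and worth noting.
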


\begin{proof} Pick any local \(s^\varDelta\)\mdash section \(\delta\) through \(\tau(x)=\delta(\phi x)\). Then
\begin{align*}
 T_{\phi x}(t\circ\delta)\circ T_x\phi-T_x\psi
&=T_{\phi x}(t\circ\delta)\circ T_x\phi-T_x(t\circ\tau)
\\%
&=Tt\circ(T_{\phi x}\delta\circ T_x\phi-T_x\tau)
\end{align*}
will be a linear map taking values in the longitudinal tangent space \(\ltangent{\psi x}\varDelta\), because the parenthesized linear map takes values in \(\ker Ts\). \end{proof}

\section{Completely transversal and full homomorphisms}\label{sec:2}

We open the present section\textemdash which like the previous one is preparatory\textemdash by reviewing some standard basic constructions. Then, we proceed to establish some seemingly not so well-known results, notably Proposition~\ref{prop:14A.1.3} and Lemma~\ref{lem:14A.1.8}, which motivate and underlie the theory discussed in the subsequent sections.

Let \(\varGamma\rightrightarrows X\), \(\varDelta\rightrightarrows Y\) be differentiable groupoids. We remind the reader that we call a homomorphism \(\phi:\varGamma\to\varDelta\) \emph{transversal} if the associated map \(s\circ\pr_2:X\ftimes\phi t\varDelta\to Y\) is submersive. We call \(\phi\) \emph{completely} transversal if the same map is, moreover, surjective.%
\footnote{\label{ftn:transversal}%
In the literature, a homomorphism which is completely transversal is usually called \emph{essentially surjective}. We believe our terminology is preferable, for obvious reasons.}

\begin{lem}\label{lem:14A.1.1} The composition of two (completely) transversal homomorphisms of differentiable groupoids is also (completely) transversal. \end{lem}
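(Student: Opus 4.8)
The plan is to unwind the definition of transversality and build the required local sections by hand. Write the two given homomorphisms as $\phi\colon\varGamma\to\varGamma'$ and $\psi\colon\varGamma'\to\varGamma''$, with base manifolds $X$, $X'$, $X''$ and base maps $f\colon X\to X'$, $h\colon X'\to X''$, so that $hf$ is the base map of $\psi\circ\phi$. By definition, transversality of $\phi$ says that $\mu_\phi\colon X\ftimes ft\varGamma'\to X'$, $(x,d)\mapsto sd$, is a submersion, and transversality of $\psi$ that $\mu_\psi\colon X'\ftimes ht\varGamma''\to X''$, $(x',\sigma)\mapsto s\sigma$, is a submersion; the word ``completely'' requires in addition that $\mu_\phi$ and $\mu_\psi$ be surjective. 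What has to be shown is that $\mu_{\psi\phi}\colon X\ftimes{(hf)}t\varGamma''\to X''$, $(x,\sigma)\mapsto s\sigma$, is a submersion (resp.\ a surjective submersion). Since the target maps of $\varGamma'$ and $\varGamma''$ are submersions, all three fibre products here are differentiable manifolds, and I will use the elementary fact that a differentiable map is a submersion exactly when it admits a differentiable local section through each point of its domain.

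The core of the argument is a splicing construction. Fix $(x_0,\sigma_0)$ in the domain of $\mu_{\psi\phi}$; put $z_0:=s\sigma_0\in X''$ and $x_0':=f(x_0)\in X'$, and observe that $(x_0',\sigma_0)$ then belongs to the domain of $\mu_\psi$ and $(x_0,1_{x_0'})$ to that of $\mu_\phi$, with $\mu_\phi(x_0,1_{x_0'})=x_0'$. Using that $\mu_\psi$ is a submersion, choose a differentiable local section $z\mapsto\bigl(\beta(z),\eta(z)\bigr)$ of $\mu_\psi$ through $(x_0',\sigma_0)$, defined for $z$ near $z_0$; using that $\mu_\phi$ is a submersion, choose a differentiable local section $x'\mapsto\bigl(\alpha(x'),\delta(x')\bigr)$ of $\mu_\phi$ through $(x_0,1_{x_0'})$, defined for $x'$ near $x_0'$. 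Then set, for $z$ near $z_0$,
\[
 a(z):=\alpha\bigl(\beta(z)\bigr),\qquad \tau(z):=\psi\bigl(\delta(\beta(z))\bigr)\cdot\eta(z).
\]
I would verify, purely by tracking source and target maps and using that $\psi$ is a functor: that the product of arrows of $\varGamma''$ on the right is legitimate (its two factors have respectively source and target equal to $h(\beta(z))$); that $\bigl(a(z),\tau(z)\bigr)$ lies in $X\ftimes{(hf)}t\varGamma''$ (the target of $\tau(z)$ works out to $h(f(a(z)))$); that $\mu_{\psi\phi}\bigl(a(z),\tau(z)\bigr)=s\,\eta(z)=z$; and that at $z=z_0$ the pair $\bigl(a(z),\tau(z)\bigr)$ reduces to $(x_0,\sigma_0)$, because $\delta(x_0')=1_{x_0'}$ and $\psi$ preserves units. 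This exhibits a differentiable local section of $\mu_{\psi\phi}$ through an arbitrary point of its domain, so $\mu_{\psi\phi}$ is a submersion.

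For the ``completely transversal'' clause I would rerun the same arrow bookkeeping without sections: given $z\in X''$, pick $(x',\sigma)$ with $\mu_\psi(x',\sigma)=z$ and then $(x,d)$ with $\mu_\phi(x,d)=x'$, and note that $\bigl(x,\psi(d)\cdot\sigma\bigr)$ lies in the domain of $\mu_{\psi\phi}$ and is carried by it to $z$. (The same conclusion can be reached diagrammatically: one checks that $X\ftimes{(hf)}t\varGamma''$ receives a surjection, over $X''$, from the iterated fibre product $(X\ftimes ft\varGamma')\times_{X'}(X'\ftimes ht\varGamma'')$\textemdash formed along $\mu_\phi$ on one side and the first projection of $X'\ftimes ht\varGamma''$ on the other, the latter being submersive since $t^{\varGamma''}$ is\textemdash on which the map to $X''$ is $\mu_\psi$ precomposed with a pullback of $\mu_\phi$, hence a submersion, surjective if $\mu_\phi$ and $\mu_\psi$ are; one then invokes that $g$ is a (surjective) submersion whenever $g\circ p$ is and $p$ is surjective.)

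I do not expect a genuine obstacle. The one point that needs care is the bookkeeping that makes the spliced arrow $\psi(\delta(\beta(z)))\cdot\eta(z)$ legitimate and places $\bigl(a(z),\tau(z)\bigr)$ in the correct fibre product: matching up the relevant source and target fibres is exactly where an error in the order of composition would derail the argument. Everything else\textemdash local sections of submersions, smoothness of the spliced section, the submersion/local-section equivalence, and the fact that a fibre product over a submersion is again a manifold\textemdash is routine and has already been used in the preceding sections.
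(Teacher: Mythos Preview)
Your argument is correct and rests on the same splicing idea as the paper: from a pair $(x,d)\in X\ftimes ft\varGamma'$ and an arrow $\sigma\in\varGamma''$ with $t\sigma=h(sd)$ one manufactures the point $\bigl(x,\psi(d)\sigma\bigr)$ in $X\ftimes{(hf)}t\varGamma''$. The paper packages this into a single commutative square
\[
\begin{array}{ccc}
 (X\ftimes ft\varGamma')\ftimes{h\circ\mu_\phi}t\varGamma'' & \longrightarrow & X\ftimes{(hf)}t\varGamma'' \\
 \big\downarrow & & \big\downarrow \\
 X'\ftimes ht\varGamma'' & \longrightarrow & X''
\end{array}
\]
whose top arrow $(x,d;\sigma)\mapsto(x,\psi(d)\sigma)$ is a surjective submersion, and then reads off the conclusion at once; your parenthetical alternative is exactly this. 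Your primary route via explicit local sections is a valid unpacking of the same diagram chase\textemdash slightly longer but perhaps more transparent about why the composite arrow lands where it should\textemdash so the two arguments differ only in presentation, not in substance.
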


\begin{proof} Let \(\phi:\varGamma\to\varDelta\) and \(\psi:\varDelta\to\varSigma\) be arbitrary homomorphisms of differentiable groupoids. Let \(X\), \(Y\), \(Z\) denote the base manifolds of \(\varGamma\), \(\varDelta\), \(\varSigma\) respectively. We have the following commutative diagram of \(C^\infty\) maps
\begin{equation*}
 \begin{split}
\xymatrix{%
 (X\ftimes\phi t\varDelta)\ftimes{\psi\circ s\circ\pr_2}t\varSigma
 \ar[d]^{(s\circ\pr_2)\times\id}
 \ar@{->>}[r]
 &
	X\ftimes{\psi\circ\phi}t\varSigma
	\ar[d]^{s\circ\pr_2}
\\
 Y\ftimes\psi t\varSigma
 \ar[r]^-{s\circ\pr_2}
 &
	Z
}\end{split}
 \qquad
 \begin{split}
\xymatrix@C=.6em{%
 (x,h;k)
 \ar@{|->}[d]
 \ar@{|->}[r]
 &
	(x,\psi(h)k)
	\ar@{|->}[d]
\\
 (sh,k)
 \ar@{|->}[r]
 &
	sk=s(\psi(h)k)
}\end{split}
\end{equation*}
in which the top horizontal map is a surjective submersion. The claim about the composition \(\psi\circ\phi\) is obvious now. \end{proof}

Let \(\varGamma\rightrightarrows X\), \(\varDelta\rightrightarrows Y\) and \(\varSigma\rightrightarrows B\) be differentiable groupoids. Let \(\phi:\varGamma\to\varSigma\) and \(\psi:\varDelta\to\varSigma\) be differentiable groupoid homomorphisms. Assume that \(\phi\) is transversal. Then we can form the \emph{weak pullback} of \(\phi\) along \(\psi\)
\begin{equation}
\label{eqn:14A.1.2}
 \begin{split}
\xymatrix@C=3em{%
 \varDelta\mathbin{_\psi\sqcap_\phi}\varGamma
 \ar[d]^{\pr_\varDelta}
 \ar[r]^-{\pr_\varGamma}
 &
	\varGamma
	\ar[d]^\phi_(.3){}="s"
\\
 \varDelta
 \ar[r]^-\psi^(.7){}="t"
 \ar@{=>}@/_/"s";"t"
 &
	\varSigma
}\end{split}
\end{equation}
whose construction we proceed to recall. Because of the transversality of \(\phi\), the following turn out to be differentiable manifolds:%
\begin{align*}
 Z
&=Y\ftimes\psi t\varSigma\ftimes s\phi X=Y\ftimes\psi{t\circ\pr_1}(\varSigma\ftimes s\phi X)\textpunct;
\\%
 \varPi
&=\varDelta\ftimes{\psi\circ s}t\varSigma\ftimes s{\phi\circ s}\varGamma=\varDelta\ftimes{\psi\circ s}{t\circ\pr_1\circ(\id\times s)}(\varSigma\ftimes s{\phi\circ s}\varGamma)\textpunct.
\end{align*}
Regarding \(Z\) as the base manifold and \(\varPi\) as the manifold of arrows, one declares the map \(s\times\id\times s\) given by \((h,k,g)\mapsto(sh,k,sg)\) [which is clearly a surjective submersion] to be the source, and the map \((h,k,g)\mapsto\bigl(th,\psi(h)k\phi(g)^{-1},tg\bigr)\) [which is obviously \(C^\infty\)] to be the target. The composition law, unit and inversion map are respectively given by
\begin{equation*}
 \begin{split}
 (h',k',g')(h,k,g)=(h'h,k,g'g)\textpunct,\quad 1(y,k,x)&=(1_y,k,1_x)
\\%
 \text{and}\quad (h,k,g)^{-1}&=\bigl(h^{-1},\psi(h)k\phi(g)^{-1},g^{-1}\bigr)\textpunct;
 \end{split}
\end{equation*}
they are obviously all \(C^\infty\) and make \(\varPi\rightrightarrows Z\) into a differentiable groupoid which we agree to indicate by \(\varDelta\mathbin{_\psi\sqcap_\phi}\varGamma\).

\begin{lem}\label{lem:14A.1.2} In the weakly commutative diagram~\eqref{eqn:14A.1.2}, the homomorphism\/ \(\pr_\varDelta\) given by\/ \((h,k,g)\mapsto h\) is transversal and in fact submersive at the level of groupoid bases. Moreover, if\/ \(\phi\) is completely transversal then\/ \(\pr_\varDelta\) is onto at the level of bases and thus a~fortiori also completely transversal. \qed \end{lem}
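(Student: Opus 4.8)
The plan is to reduce the whole statement to the elementary fact that submersions, and surjective submersions, are stable under pullback, once the transversality hypothesis on $\phi$ has been matched correctly against the fibre-product description of the base manifold $Z$.

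First I would record the form of transversality adapted to the construction at hand: a homomorphism $\phi:\varGamma\to\varSigma$ is transversal if and only if the map $t^\varSigma\circ\pr_1:\varSigma\ftimes s\phi X\to B$ is submersive, and completely transversal if and only if that same map is moreover onto. This follows by applying the inversion diffeomorphism $k\mapsto k^{-1}$ of $\varSigma$, which induces a diffeomorphism $X\ftimes\phi t\varSigma\approxto\varSigma\ftimes s\phi X$, $(x,k)\mapsto(k^{-1},x)$, that intertwines the map $s^\varSigma\circ\pr_2$ occurring in the definition with $t^\varSigma\circ\pr_1$, because $t^\varSigma(k^{-1})=s^\varSigma(k)$. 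I would observe in passing that the submersivity half of this reformulation is exactly what makes $Z=Y\ftimes\psi{t\circ\pr_1}(\varSigma\ftimes s\phi X)$ a manifold in the first place, even though $\psi$ itself need not be submersive.

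Next I would observe that the base map of $\pr_\varDelta$, namely $b:Z\to Y$, $(y,k,x)\mapsto y$, is by the very definition of $Z$ nothing but the pullback of $t^\varSigma\circ\pr_1:\varSigma\ftimes s\phi X\to B$ along $\psi:Y\to B$. Since the former map is submersive (by transversality of $\phi$) and submersions are stable under pullback, $b$ is submersive; this establishes that $\pr_\varDelta$ is ``submersive at the level of groupoid bases''. If in addition $\phi$ is completely transversal, then $t^\varSigma\circ\pr_1$ is a surjective submersion, and surjective submersions are likewise stable under pullback, so $b$ is onto; this establishes the second assertion.

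Finally I would deduce transversality, respectively complete transversality, of $\pr_\varDelta$ from the corresponding property of $b$. For transversality one has to check that $s^\varDelta\circ\pr_2:Z\ftimes b{t^\varDelta}\varDelta\to Y$ is submersive; but $\pr_2:Z\ftimes b{t^\varDelta}\varDelta\to\varDelta$ is the pullback of the submersion $b$ along $t^\varDelta$, hence is submersive, and $s^\varDelta$ is submersive by the groupoid axioms, so the composite is submersive. For complete transversality, once $b$ is onto it suffices, given $y'\in Y$, to pick $z\in Z$ with $b(z)=y'$: then $(z,1_{y'})$ lies in $Z\ftimes b{t^\varDelta}\varDelta$ and is carried by $s^\varDelta\circ\pr_2$ to $s^\varDelta(1_{y'})=y'$. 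None of these steps is genuinely difficult; if anything requires care it is the very first one, that is, phrasing the transversality of $\phi$ in terms of $\varSigma\ftimes s\phi X$ rather than $X\ftimes\phi t\varSigma$, since it is the former fibre product that literally appears inside $Z$.
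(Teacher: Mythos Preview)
Your argument is correct. The paper itself gives no proof of this lemma (it is marked with \qed), so there is nothing to compare against; your reduction to stability of (surjective) submersions under pullback, together with the observation that a homomorphism whose base map is submersive is automatically transversal, is exactly the routine verification the author is leaving to the reader.
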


Let \(\varDelta\rightrightarrows Y\) be a differentiable groupoid. Let \(f:X\to Y\) be a differentiable map which is \emph{essentially submersive} in the sense that the associated map \(s\circ\pr_2:X\ftimes ft\varDelta\to Y\) is submersive. Then we can form the \emph{pullback groupoid}
\begin{equation}
\label{eqn:12B.21.1}
 f^*\varDelta:=(X\ftimes ft\varDelta)\ftimes{s\circ\pr_2}fX\rightrightarrows X
\end{equation}
whose groupoid structure is uniquely determined by the requirement that the obvious `projection' \(\pi:f^*\varDelta\to\varDelta\) ought to be a homomorphism of groupoids. The `projection' \(\pi\) is in fact a homomorphism of differentiable groupoids, and a weak equivalence as soon as \(f\) is also \emph{essentially surjective} (i.e.~as soon as \(s\circ\pr_2\) is also surjective).

\begin{prop}\label{prop:14A.1.3} Let\/ \(\varGamma\), \(\varDelta\) be {\em Lie} groupoids. Let\/ \(\phi:\varGamma\to\varDelta\) be a homomorphism which is transversal and full (as an abstract functor). Then\/ \(\phi\) is automatically\/ \emph{\(C^\infty\)\mdash full} in the following sense; let\/ \(X\), \(Y\) denote the base manifolds of\/ \(\varGamma\), \(\varDelta\) respectively and let\/ \(f:X\to Y\) denote the base map induced by\/ \(\phi\). For any maps\/ \(h:U\to\varDelta\) and\/ \((x,x'):U\to X\times X\) of class\/ \(C^\infty\) which satisfy the condition\/ \((s,t)\circ h=(f\times f)\circ(x,x')\) there exist an open cover\/ \(c:U'\onto U\) (i.e.~a surjective map which on each connected component of\/ \(U'\) restricts to a diffeomorphism onto an open subset of\/ \(U\)) and a\/ \(C^\infty\) map\/ \(g:U'\to\varGamma\) such that\/ \(\phi\circ g=h\circ c\) and such that\/ \((s,t)\circ g=(x,x')\circ c\).
\begin{equation}
\label{eqn:14A.1.5}
 \begin{split}
\xymatrix@R=1.7ex{%
 U'
 \ar@{-->}@(dr,l)[drr]^(.3)g
 \ar@{-->>}[r]^c
 &
	U
	\ar@(d,l)[dddr]_{(x,x')}
	\ar@(r,ul)[drr]^h
\\
 &&
		\varGamma
		\ar[dd]^{(s,t)}
		\ar[r]^\phi
		&
			\varDelta
			\ar[dd]^{(s,t)}
\\ \\
 &&
		X\times X
		\ar[r]^{f\times f}
		&
			Y\times Y
}\end{split}
\end{equation} \end{prop}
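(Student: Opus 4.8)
The plan is to reduce first to the case in which \(\phi\) covers the identity of \(X\), then to show that such a homomorphism is a surjective submersion, after which the property of being \(C^\infty\)\mdash full drops out by a routine argument with local sections.

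\emph{Reduction to \(f=\id_X\).} Transversality of \(\phi\) says precisely that \(f\) is essentially submersive, so the pullback groupoid \(f^*\varDelta\) of~\eqref{eqn:12B.21.1} is defined, together with its canonical projection \(\pi\colon f^*\varDelta\to\varDelta\) covering \(f\). By the defining universal property of \(f^*\varDelta\) the homomorphism \(\phi\) factors as \(\varGamma\xrightarrow{\ \iota\ }f^*\varDelta\xrightarrow{\ \pi\ }\varDelta\), where \(\iota\) covers \(\id_X\) and is given on arrows by \(g\mapsto(tg,\phi(g),sg)\). For all \(a,a'\in X\) the map \(\pi\) restricts to a bijection \((f^*\varDelta)(a,a')\simto\varDelta(f(a),f(a'))\) intertwining \(\iota\) with \(\phi\), so \(\iota\) is full (as an abstract functor) precisely because \(\phi\) is. Moreover, given data \(h,(x,x')\) as in the statement, one lifts \(h\) through \(\pi\) \emph{on the nose}: the formula \(u\mapsto\tilde h(u):=(x'(u),h(u),x(u))\) defines a \(C^\infty\) map \(\tilde h\colon U\to f^*\varDelta\) with \(\pi\circ\tilde h=h\) and \((s,t)\circ\tilde h=(x,x')\). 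Hence it suffices to establish the proposition for \(\iota\); replacing \(\varDelta\) by \(f^*\varDelta\) and \(\phi\) by \(\iota\), we may and do assume from now on that \(Y=X\), that \(f=\id_X\), and (the compatibility condition then forcing \(x=s\circ h\), \(x'=t\circ h\)) that \(h\) alone is the given data.

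\emph{\(\phi\) is a surjective submersion.} Surjectivity is immediate from fullness: every \(\ell\in\varDelta\) lies in \(\varDelta(s\ell,t\ell)\), onto which \(\varGamma(s\ell,t\ell)\) maps under \(\phi\). For the submersion property, fix \(a\in X\) and examine the restriction \(\phi^a\colon\varGamma^a\to\varDelta^a\) to source fibres: it is a morphism of the principal bundles \(\pr^\varGamma_a\colon\varGamma^a\onto O^\varGamma_a\) and \(\pr^\varDelta_a\colon\varDelta^a\onto O^\varDelta_a\), equivariant along \(\phi^a_a\colon\varGamma^a_a\to\varDelta^a_a\) and covering the map \(O^\phi_a\colon O^\varGamma_a\to O^\varDelta_a\) of~\eqref{eqn:12B.3.6}. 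Here \(\phi^a_a\) is onto (fullness, with \(a'=a\)) and, being a Lie group homomorphism, of constant rank, hence a submersion; and \(O^\phi_a\) is a bijective immersion — surjective because by fullness the \(\varGamma\)-orbit and the \(\varDelta\)-orbit through \(a\) have the same underlying subset of \(X\) while \(\incl^\varDelta_a\circ O^\phi_a=\incl^\varGamma_a\), injective and immersive for the same reason, \(\incl^\varGamma_a\) being an injective immersion. Granting (see below) that \(O^\phi_a\) is in fact a \emph{diffeomorphism}, a routine local-trivialization argument for principal bundles shows that \(\phi^a\) is a submersion. Then \(\phi\) itself is a submersion: since \(s^\varDelta\circ\phi=s^\varGamma\) is a submersion, \(\im(T_g\phi)\) together with \(\ker(T_{\phi(g)}s^\varDelta)=T_{\phi(g)}\varDelta^{sg}\) spans \(T_{\phi(g)}\varDelta\), while \(T_{\phi(g)}\varDelta^{sg}\subseteq\im(T_g\phi)\) because \(\phi^{sg}\) is a submersion at \(g\); hence \(T_g\phi\) is onto. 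Finally, a surjective submersion admits \(C^\infty\) local sections: covering \(\varDelta\) by opens \(V_i\) with \(C^\infty\) sections \(\tau_i\colon V_i\to\varGamma\) of \(\phi\), putting \(U_i:=h^{-1}(V_i)\) and \(U':=\bigsqcup_i U_i\), taking for \(c\colon U'\onto U\) the evident open cover and for \(g\colon U'\to\varGamma\) the map that agrees with \(\tau_i\circ h\) on \(U_i\), one gets \(\phi\circ g=h\circ c\) and, since \(\phi\) covers \(\id_X\), also \((s,t)\circ g=(x,x')\circ c\).

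\emph{Why \(O^\phi_a\) is a diffeomorphism — the main obstacle.} Both \(O^\varGamma_a\) and \(O^\varDelta_a\) are second-countable manifolds of constant dimension, being quotients of the constant-dimensional second-countable source fibres \(\varGamma^a\subset\varGamma\) and \(\varDelta^a\subset\varDelta\) by free principal actions. A bijective immersion between second-countable manifolds of constant dimension must preserve the dimension: were the domain of strictly smaller dimension, its image would be a countable union of smooth images of lower-dimensional charts, hence of sets of measure zero, and so could not fill the target. Thus \(O^\phi_a\), a bijective immersion with equidimensional domain and codomain, is a local diffeomorphism, hence a diffeomorphism. This is exactly the sort of statement that the second-countability hypothesis is there to secure (compare Appendix~\ref{sec:A}); without it the assertion fails — one may biject a zero-dimensional manifold onto \(\R^n\) by an immersion — and I expect this orbit-comparison step to be the only genuine difficulty, the reduction through \(f^*\varDelta\), the principal-bundle bookkeeping, and the assembly of the étale cover being entirely routine.
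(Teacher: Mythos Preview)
Your proof is correct and follows the same route as the paper's: factor through the pullback groupoid \(f^*\varDelta\), show that the resulting full homomorphism over \(\id_X\) is a surjective submersion (the paper isolates this step as Proposition~\ref{prop:12B.5.2}, proved via Lemma~\ref{lem:12B.5.1} using Baire's theorem where you use a measure-zero argument), then finish with local sections. The minor variations---constant-rank reasoning for \(\phi^a_a\) in place of the first isomorphism theorem, and the explicit span argument for the surjectivity of \(T_g\phi\) in place of the paper's block-matrix computation---are inessential.
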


\begin{proof} Since the map \(f:X\to Y\) is essentially submersive, we can form the pullback groupoid \(f^*\varDelta\rightrightarrows X\), as in \eqref{eqn:12B.21.1}. In the case under consideration this is a {\em Lie} groupoid. Its `projection' \(\pi:f^*\varDelta\to\varDelta\) is a homomorphism of Lie groupoids which covers the map \(f:X\to Y\) at base level. By the evident \(C^\infty\)\mdash pullback universal property of this homomorphism, \(\phi\) will factor through \(f^*\varDelta\) as \(\phi=\pi\circ\phi'\) for a unique homomorphism \(\phi':\varGamma\to f^*\varDelta\) covering the identity on the common base \(X\) of \(\varGamma\) and \(f^*\varDelta\).
\begin{equation}
\label{eqn:14A.1.6}
 \begin{split}
\xymatrix@R=1.7ex{%
 \varGamma
 \ar@{-->}[dr]^(.6){\phi'}
 \ar@(r,ul)[drr]^(.6)\phi
 \ar@(d,l)[dddr]_{(s,t)}
\\
 &
	f^*\varDelta
	\ar[dd]^{(s,t)}
	\ar[r]^-\pi
	&
		\varDelta
		\ar[dd]^{(s,t)}
\\ \\
 &
	X\times X
	\ar[r]^{f\times f}
	&
		Y\times Y
}\end{split}
\end{equation}
Fullness of \(\phi\) plainly entails fullness of \(\phi'\), so that \(\phi'\) is a surjective homomorphism of Lie groupoids which covers the identity over \(X\). It follows from Proposition~\ref{prop:12B.5.2} in the appendix that \(\phi'\) is actually an {\em epimorphism} of Lie groupoids viz.~that the map which \(\phi'\) induces between the manifolds of arrows is a surjective submersion.

Now, suppose we are assigned maps \(h\) and \((x,x')\) as in the universal problem~\eqref{eqn:14A.1.5}. By the \(C^\infty\)\mdash pullback universal property of the square~\eqref{eqn:14A.1.6}, there will be a unique \(C^\infty\) map \(h':U\to f^*\varDelta\) satisfying the conditions \(\pi\circ h'=h\) and \((s,t)\circ h'=(x,x')\). We are thus reduced to the simpler problem depicted below, where the outer square commutes by definition of \(h'\).
\begin{equation}
\label{eqn:14A.1.7}
 \begin{split}
\xymatrix@R=1.7ex{%
 U'
 \ar@{-->}@(dr,l)[drr]^(.3)g
 \ar@{-->>}[r]
 &
	U
	\ar@(d,l)[dddr]_{(x,x')}
	\ar@(r,ul)[drr]^{h'}
\\
 &&
		\varGamma
		\ar[dd]^{(s,t)}
		\ar@{->>}[r]^-{\phi'}
		&
			f^*\varDelta
			\ar[dd]^{(s,t)}
\\ \\
 &&
		X\times X
		\ar[r]^=
		&
			X\times X
}\end{split}
\end{equation}
Consider any point \(u\in U\). Since \(\phi':\varGamma\onto f^*\varDelta\) is a surjective submersion, we can find a local \(C^\infty\) section \(g'_u:V_u\to\varGamma\) to \(\phi'\) defined around \(h'(u)\). We can then select an open neighborhood \(W_u\) of \(u\) in \(U\) so that \(h'(W_u)\subset V_u\) and define a map \(g_u\) on \(W_u\) by setting \(g_u=g'_u\circ\restr h'\to W_u\endrestr\). The coproduct map \(g=\coprod g_u\) [defined on the disjoint union \(U'=\coprod W_u\) of all the open sets \(W_u\) as \(u\) is let vary over \(U\)] will be a solution to the problem schematized in \eqref{eqn:14A.1.7}. \end{proof}

\begin{cor}\label{cor:14A.1.4} Let\/ \(\phi:\varGamma\to\varDelta\) be a completely transversal homomorphism of Lie groupoids which is both full and faithful (as an abstract functor). Then\/ \(\phi\) must be a weak equivalence. \qed \end{cor}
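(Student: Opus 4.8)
The plan is to reduce everything to the pullback groupoid construction together with Proposition~\ref{prop:14A.1.3}. Write $X$, $Y$ for the base manifolds of $\varGamma$, $\varDelta$, and $f\colon X\to Y$ for the base map of $\phi$. Since $\phi$ is completely transversal, $f$ is essentially submersive \emph{and} essentially surjective; hence the pullback groupoid $f^*\varDelta\rightrightarrows X$ of \eqref{eqn:12B.21.1} is a well-defined Lie groupoid and its canonical `projection' $\pi\colon f^*\varDelta\to\varDelta$ is a weak equivalence. Because $\phi$ and $\pi$ induce one and the same base map $f$, the map~\eqref{eqn:12B.16.7}, namely $s\circ\pr_2\colon X\ftimes ft\varDelta\to Y$, is literally common to both; as $\pi$ is a weak equivalence this map is a surjective submersion, so the first of the two conditions defining a weak equivalence already holds for $\phi$. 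It remains only to show that the square~\eqref{eqn:12B.16.8} associated with $\phi$ is a pullback of differentiable manifolds, keeping in mind that the corresponding square for $\pi$ \emph{is} such a pullback.

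Proceeding as in the proof of Proposition~\ref{prop:14A.1.3}, the $C^\infty$\mdash pullback universal property of $\pi$ provides a unique homomorphism $\phi'\colon\varGamma\to f^*\varDelta$, covering the identity of $X$, with $\phi=\pi\circ\phi'$. Fullness of $\phi$ entails fullness of $\phi'$, so $\phi'$ is an epimorphism of Lie groupoids by Proposition~\ref{prop:12B.5.2}, i.e.~it induces a surjective submersion between the manifolds of arrows. I would then bring in faithfulness of $\phi$: since $\phi'$ sits over $\id_X$, any two arrows of $\varGamma$ with equal image under $\phi'$ have the same source and target and the same image under $\phi=\pi\circ\phi'$, hence coincide; thus $\phi'$ is injective on arrows. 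A surjective submersion which is injective has singleton fibres, so it is a local diffeomorphism between manifolds of the same dimension, and being bijective it is a diffeomorphism; therefore $\phi'$ is an isomorphism of Lie groupoids. (One could instead invoke here the fact, recorded in the Introduction and proved in Appendix~\ref{sec:A}, that a fully faithful homomorphism of Lie groupoids covering the identity at base level is automatically an isomorphism.)

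Finally I would assemble the pieces: $\phi=\pi\circ\phi'$ with $\pi$ a weak equivalence and $\phi'$ an isomorphism lying over $\id_X$ and hence inducing $\id_{X\times X}$ at the level of $X\times X$. The commutative square~\eqref{eqn:12B.16.8} for $\phi$ is obtained from the (pullback) square for $\pi$ by replacing its upper-left corner $f^*\varDelta$ with the isomorphic object $\varGamma$ along $\phi'$, an operation that preserves the pullback property; so the square for $\phi$ is a pullback of differentiable manifolds as well, and $\phi$ is a weak equivalence. The real content of the argument is carried by Proposition~\ref{prop:14A.1.3} and by the epimorphism statement of Proposition~\ref{prop:12B.5.2}; the step most deserving of care is the upgrade from ``epimorphism that is injective'' to ``isomorphism'', i.e.~verifying that faithfulness turns the surjective submersion $\phi'$ into a diffeomorphism.
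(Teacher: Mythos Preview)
Your argument is correct and is precisely the intended unpacking of the paper's bare \qed: the corollary is meant to follow immediately from the factorization $\phi=\pi\circ\phi'$ established in the proof of Proposition~\ref{prop:14A.1.3}, together with the observation that faithfulness of $\phi$ forces $\phi'$ to be injective on arrows and hence (being already an epimorphism by Proposition~\ref{prop:12B.5.2}) an isomorphism; since complete transversality makes $\pi$ a weak equivalence, so is $\phi$. Your parenthetical remark that one may alternatively invoke Corollary~\ref{cor:12B.5.6} is exactly the shortcut the paper has in mind.
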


Of course, the property of \(C^\infty\)\mdash fullness defined with Proposition~\ref{prop:14A.1.3} makes sense for homomorphisms between arbitrary differentiable groupoids. Any \(C^\infty\)\mdash full homomorphism of differentiable groupoids is full.

\begin{lem}\label{lem:14A.1.5} The composition of any two\/ \(C^\infty\)\mdash full homomorphisms of differentiable groupoids is itself\/ \(C^\infty\)\mdash full. \qed \end{lem}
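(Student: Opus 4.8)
The plan is to invoke the defining property of $C^\infty$\mdash fullness from Proposition~\ref{prop:14A.1.3} twice, once for each factor. Write the two given homomorphisms as $\phi\colon\varGamma\to\varDelta$ and $\psi\colon\varDelta\to\varSigma$, with base maps $f\colon X\to Y$ and $f'\colon Y\to Z$, so that $\psi\circ\phi$ covers $f'\circ f\colon X\to Z$ at base level. Suppose we are handed $C^\infty$ maps $k\colon U\to\varSigma$ and $(x,x')\colon U\to X\times X$ with $(s,t)\circ k=(f'f\times f'f)\circ(x,x')$; we must produce an open cover $c\colon U'\onto U$ and a $C^\infty$ map $g\colon U'\to\varGamma$ with $\psi\circ\phi\circ g=k\circ c$ and $(s,t)\circ g=(x,x')\circ c$.

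First I would apply $C^\infty$\mdash fullness of $\psi$ to the data $k$ and $(fx,fx')\colon U\to Y\times Y$; the compatibility condition $(s,t)\circ k=(f'\times f')\circ(fx,fx')$ that this requires is precisely our hypothesis, so we obtain an open cover $c_1\colon U_1\onto U$ together with a $C^\infty$ map $h\colon U_1\to\varDelta$ satisfying $\psi\circ h=k\circ c_1$ and $(s,t)\circ h=(f\times f)\circ(x,x')\circ c_1$. The last equation says exactly that $h$ and $(x,x')\circ c_1\colon U_1\to X\times X$ are admissible input for the $C^\infty$\mdash fullness of $\phi$, which then yields an open cover $c_2\colon U_2\onto U_1$ and a $C^\infty$ map $g\colon U_2\to\varGamma$ with $\phi\circ g=h\circ c_2$ and $(s,t)\circ g=(x,x')\circ c_1\circ c_2$. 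Setting $U'=U_2$, $c=c_1\circ c_2$ and keeping $g$ as just produced, one reads off $\psi\circ\phi\circ g=\psi\circ h\circ c_2=k\circ c_1\circ c_2=k\circ c$ and $(s,t)\circ g=(x,x')\circ c$, which is what $C^\infty$\mdash fullness of $\psi\circ\phi$ demands.

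The only step that needs an actual argument\textemdash and hence the main (if mild) obstacle\textemdash is checking that the composite $c=c_1\circ c_2$ is again an open cover in the sense of Proposition~\ref{prop:14A.1.3}, i.e.\ a surjective $C^\infty$ map that on every connected component of $U'$ restricts to a diffeomorphism onto an open subset of $U$. Surjectivity is immediate. For the local property, let $W$ be a connected component of $U_2$; then $c_2$ maps $W$ diffeomorphically onto an open subset $V$ of $U_1$, and $V$, being the continuous image of the connected set $W$, is connected and therefore contained in a single connected component of $U_1$, on which $c_1$ restricts to a diffeomorphism onto an open subset of $U$. Composing the two restrictions shows that $(c_1\circ c_2)|_W$ is a diffeomorphism of $W$ onto an open subset of $U$. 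This closure of the class of open covers under composition is worth recording as a separate remark, since the same reasoning is what makes these maps behave well under the fraction\mdash calculus manipulations of the later sections.
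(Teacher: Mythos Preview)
Your argument is correct and is exactly the routine verification the paper leaves to the reader (the lemma is stated with a bare \qed\ and no proof). The extra remark that open covers in the sense of Proposition~\ref{prop:14A.1.3} are closed under composition is a sensible point to make explicit, and your justification for it is fine.
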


\begin{lem}\label{lem:14A.1.6} In the weak pullback diagram~\eqref{eqn:14A.1.2}, the homomorphism\/ \(\pr_\varDelta\) must be\/ \(C^\infty\)\mdash full whenever so is\/ \(\phi\) (of course assuming this is transversal). \end{lem}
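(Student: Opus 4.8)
The plan is to reduce the $C^\infty$-fullness problem for $\pr_\varDelta$ directly to the $C^\infty$-fullness of $\phi$, by unwinding the explicit description of the weak pullback $\varPi=\varDelta\mathbin{_\psi\sqcap_\phi}\varGamma$ recalled above. Write $Z=Y\ftimes\psi t\varSigma\ftimes s\phi X$ for the base manifold of $\varPi$ and $p\colon Z\to Y$, $(y,k,x)\mapsto y$, for the base map of $\pr_\varDelta$. Suppose we are given $C^\infty$ maps $h\colon U\to\varDelta$ and $(z,z')\colon U\to Z\times Z$, with component decompositions $z=(y,k,x)$ and $z'=(y',k',x')$, satisfying the compatibility hypothesis $(s,t)\circ h=(p\times p)\circ(z,z')$, i.e.\ $s\circ h=y$ and $t\circ h=y'$. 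For any prospective lift $\omega=(H,K,G)\colon U'\to\varPi$ of this datum along an open cover $c\colon U'\to U$, the formulas for the source and target of $\varPi$ force $H=h\circ c$, $K=k\circ c$, $(s,t)\circ G=(x,x')\circ c$, and $\psi(h\circ c)\,(k\circ c)\,\phi(G)^{-1}=k'\circ c$. Solving this last equation for $\phi(G)$, one sees that the only genuinely free ingredient is the $\varGamma$-valued map $G$, which is constrained to be a $C^\infty$ lift through $\phi$ of the single $\varSigma$-valued map
\begin{equation*}
 \ell:=(k')^{-1}\,\psi(h)\,k\colon U\to\varSigma,
\end{equation*}
subject to the base-level condition $(s,t)\circ G=(x,x')\circ c$.

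The only step that requires an actual argument is to recognize $(\ell,(x,x'))$ as an admissible datum for the $C^\infty$-fullness problem for $\phi$. Using the two fiber-product conditions defining $Z$ --- namely $t^\varSigma k=\psi(y)$, $s^\varSigma k=\phi(x)$, and the analogous pair for $k'$ --- together with $s\circ h=y$ and $t\circ h=y'$, one checks that the triple product $\ell=(k')^{-1}\psi(h)k$ is composable and that $(s,t)\circ\ell=(\phi\circ x,\phi\circ x')=(\phi\times\phi)\circ(x,x')$ (writing $\phi$ also for the induced base map $X\to B$); moreover $\ell$ is of class $C^\infty$, being assembled from the $C^\infty$ structure maps of $\varSigma$ and the $C^\infty$ functor $\psi$. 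Hence Proposition~\ref{prop:14A.1.3}, applied to $\phi$ with the datum $(\ell,(x,x'))$, furnishes an open cover $c\colon U'\to U$ and a $C^\infty$ map $G\colon U'\to\varGamma$ with $\phi\circ G=\ell\circ c$ and $(s,t)\circ G=(x,x')\circ c$.

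It then remains to set $\omega:=(h\circ c,\,k\circ c,\,G)\colon U'\to\varPi$ and to check, routinely, that $\omega$ lands in $\varPi$ (its two fiber-product constraints follow from those defining $Z$ and from $s\circ G=x\circ c$), that $\pr_\varDelta\circ\omega=h\circ c$ (immediate from the definition of $\pr_\varDelta$), and that $(s,t)\circ\omega=(z,z')\circ c$ (the $Y$- and $X$-components being obvious, and the $\varSigma$-component of $t\circ\omega$ telescoping back to $k'\circ c$ precisely because $\phi(G)=\ell\circ c=(k'\circ c)^{-1}\psi(h\circ c)(k\circ c)$). I do not anticipate any genuine obstacle in this proof: all of the analytic content is absorbed into Proposition~\ref{prop:14A.1.3}, and the only delicate point is to orient the arrows so that $\ell$ is exactly the expression whose $\phi$-lift makes the target formula of the weak pullback collapse correctly --- getting that bit of arrow-bookkeeping right is essentially the whole argument.
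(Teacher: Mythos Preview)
Your argument is correct and follows essentially the same route as the paper's own proof: form the auxiliary map $\ell=(k')^{-1}\psi(h)k$ into $\varSigma$, check that $(s,t)\circ\ell=(\phi\times\phi)\circ(x,x')$, invoke the $C^\infty$\mdash fullness of $\phi$ to obtain a lift $G$ over an open cover $c$, and then assemble the solution $(h\circ c,k\circ c,G)$. One small terminological point: you cite Proposition~\ref{prop:14A.1.3} for the lifting step, but what you are actually using is the \emph{hypothesis} that $\phi$ is $C^\infty$\mdash full (the property \emph{defined} in that proposition), not the proposition itself (which is the implication ``full${}+{}$transversal${}\Rightarrow{}$$C^\infty$\mdash full'' for Lie groupoids); the lemma is stated in the generality of differentiable groupoids and takes $C^\infty$\mdash fullness of $\phi$ as a standing assumption.
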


\begin{proof} Suppose two \(C^\infty\) maps \(h:U\to\varDelta\) and \((y,k,x;y',k',x'):U\to Z\times Z\) [recall: \(Z=Y\ftimes\psi t\varSigma\ftimes s\phi X\) in the notations of \ref{lem:14A.1.2}] are given which satisfy the condition
\begin{equation*}
 (s,t)\circ h=(\pr_\varDelta\times\pr_\varDelta)\circ(y,k,x;y',k',x')=(y,y')\textpunct.
\end{equation*}
Then the map \([i\circ k'][\psi\circ h]k:U\to\varSigma\mskip.mu plus 5mu\sidetext(i=\text{ inversion map in $\varSigma$ hereafter})\) given by \(u\mapsto(k'_u)^{-1}\psi(h_u)k_u\) and the map \((x,x'):U\to X\times X\) will satisfy the condition
\begin{equation*}
 (s,t)\circ([i\circ k'][\psi\circ h]k)=(\phi\times\phi)\circ(x,x')\textpunct.
\end{equation*}
Now, since \(\phi\) is \(C^\infty\)\mdash full, it will be possible to find an open cover \(c:U'\onto U\) for which there exists a \(C^\infty\) map \(g:U'\to\varGamma\) such that
\begin{equation*}
 \phi\circ g=([i\circ k'][\psi\circ h]k)\circ c
\quad
 \text{and}
\quad
 (s,t)\circ g=(x,x')\circ c\textpunct.
\end{equation*}
The map \((h\circ c,k\circ c,g):U'\to\varDelta\ftimes{\psi\circ s}t\varSigma\ftimes s{\phi\circ s}\varGamma\) will be a solution for the original universal problem expressing the \(C^\infty\)\mdash fullness of the homomorphism \(\pr_\varDelta\). \end{proof}

\begin{rmk*} Making \(U=\{\ast\}\) in the preceding proof, the same reasoning shows that \(\pr_\varDelta\) must be full whenever so is \(\phi\). \end{rmk*}

The above lemmas suggest that one might be able to build a reasonable category of fractions by localizing differentiable groupoids at their completely transversal, \(C^\infty\)\mdash full homomorphisms. This is indeed so, as we will see shortly. Before proceeding further, however, we must convince ourselves that such homomorphisms preserve the ``effective transversal geometry'' of differentiable groupoids. This is the goal of the next couple of lemmas. Recall that any differentiable groupoid \(\varGamma\rightrightarrows X\) gives rise to an associated {\em orbit space} \(X/\varGamma\); this is the quotient of \(X\) by the equivalence relation that identifies any two points which can be connected by an arrow in \(\varGamma\), topologized with the finest topology that makes the quotient projection \(X\to X/\varGamma\) continuous.

\begin{lem}\label{lem:14A.1.7} Any full, completely transversal homomorphism of differentiable groupoids induces a homeomorphism between the associated orbit spaces. \end{lem}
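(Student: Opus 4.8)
The plan is to show that the map $\bar f\colon X/\varGamma\to Y/\varDelta$ induced on orbit spaces by the base map $f\colon X\to Y$ of the given homomorphism $\phi$ is a continuous open bijection, hence a homeomorphism; write $q_X\colon X\to X/\varGamma$ and $q_Y\colon Y\to Y/\varDelta$ for the orbit projections. That $\bar f$ is well defined and continuous costs nothing: $\phi$ sends an arrow $g\in\varGamma(x,x')$ to an arrow $\phi(g)\in\varDelta(f x,f x')$, so $f$ carries $\varGamma$\mdash equivalent points to $\varDelta$\mdash equivalent points and therefore descends to a map of orbit sets; and $q_Y\circ f$ is continuous and constant on $\varGamma$\mdash orbits, so by the universal property of the quotient topology it factors through $q_X$ by a continuous map, which is $\bar f$. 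Only the functoriality of $\phi$ enters here.

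Next I would settle bijectivity, spending one hypothesis on each direction. \emph{Surjectivity of $\bar f$}: being completely transversal says precisely that $s^\varDelta\circ\pr_2\colon X\ftimes ft\varDelta\to Y$ is a surjective submersion, so for any $y\in Y$ there is a pair $(x,k)$ with $f x=tk$ and $sk=y$, i.e.~$k\in\varDelta(y,f x)$, whence $\bar f(q_X(x))=q_Y(f x)=q_Y(y)$. \emph{Injectivity of $\bar f$}: if $q_Y(f x)=q_Y(f x')$ there is an arrow $k\in\varDelta(f x,f x')$, and fullness of $\phi$ as an abstract functor yields some $g\in\varGamma(x,x')$ with $\phi(g)=k$ (in fact only the nonemptiness of $\varGamma(x,x')$ is needed), so $q_X(x)=q_X(x')$.

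The crux is openness of $\bar f$, and here complete transversality enters a second time, through the elementary fact that a submersion of differentiable manifolds is an open map. Let $V\subset X/\varGamma$ be open and put $W=q_X^{-1}(V)$, an open, $\varGamma$\mdash saturated subset of $X$. The key point is the set equality
\begin{equation*}
 q_Y^{-1}\bigl(\bar f(V)\bigr)=(s^\varDelta\circ\pr_2)\bigl(W\ftimes ft\varDelta\bigr)\textpunct,
\end{equation*}
i.e.~the $\varDelta$\mdash saturation of $f(W)$ coincides with the image of the open set $W\ftimes ft\varDelta=\pr_1^{-1}(W)\subset X\ftimes ft\varDelta$ under the open map $s^\varDelta\circ\pr_2$. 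Granting this, $q_Y^{-1}(\bar f(V))$ is open in $Y$, hence $\bar f(V)$ is open in $Y/\varDelta$, and $\bar f$ is a homeomorphism. The equality is a routine unwinding of definitions: if $(x,k)\in W\ftimes ft\varDelta$ then $q_Y(sk)=q_Y(tk)=q_Y(f x)=\bar f(q_X(x))\in\bar f(V)$ since $x\in W$; conversely, if $q_Y(y)\in\bar f(V)$ then $q_Y(y)=\bar f(q_X(x_0))=q_Y(f x_0)$ for some $x_0\in W$, so there is $k\in\varDelta$ with $sk=y$ and $tk=f x_0$, and $(x_0,k)\in W\ftimes ft\varDelta$ is carried to $y$ by $s^\varDelta\circ\pr_2$.

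I do not anticipate a genuine obstacle: the whole argument is point-set topological (no Hausdorffness or second countability is invoked), resting only on the functoriality of $\phi$ and the openness of submersions. The single step needing a little care is the set-theoretic identification displayed above, which is exactly where the ``submersive'' half of complete transversality\textemdash as opposed to the surjectivity already used for surjectivity of $\bar f$\textemdash does genuine work.
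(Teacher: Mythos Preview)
Your proof is correct and follows exactly the same approach as the paper's: complete transversality yields surjectivity and openness of the induced orbit-space map, fullness yields injectivity. The paper's proof is a three-line sketch that simply asserts these three implications; you have unpacked them in full, in particular the set-theoretic identity underlying openness, which the paper leaves implicit.
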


\begin{proof} Let \(\phi:\varGamma\to\varDelta\) be any such homomorphism and let \(f:X/\varGamma\to Y/\varDelta\) denote the continuous map induced by \(\phi\) between the orbit spaces of \(\varGamma\) and \(\varDelta\). By complete transversality of \(\phi\), \(f\) must be surjective and open. By fullness of \(\phi\), \(f\) must be injective. \end{proof}

For any base point \(x\) of an arbitrary differentiable groupoid \(\varGamma\), put
\begin{equation*}
 \act\varGamma^x_x:=\quot*\varGamma^x_x\by\nef\varGamma^x_x\endquot\textpunct;
\end{equation*}
since \(\nef\varGamma^x_x\) is a closed normal subgroup of the differentiable group \(\varGamma^x_x\), the quotient group \(\act\varGamma^x_x\) will inherit a canonical differentiable group structure. We shall refer to the effective \(\act\varGamma^x_x\)~space \(\ttangent x\varGamma\) as the \emph{effective first order approximation} of \(\varGamma\) at \(x\) or as the \emph{effective infinitesimal model} for \(\varGamma\) at \(x\).

A homomorphism of differentiable groupoids \(\phi:\varGamma\to\varDelta\) shall be called \emph{faithfully} transversal if (it is transversal and) for every point \(x\) in the base of \(\varGamma\) the linear map \(\ttangent x\phi:\ttangent x\varGamma\to\ttangent{\phi x}\varDelta\) is injective (hence bijective by Proposition~\ref{prop:12B.16.5}). By Corollary~\ref{cor:12B.16.4}, any transversal \(\phi:\varGamma\to\varDelta\) must induce a differentiable group homomorphism \(\act\phi^x_x:\act\varGamma^x_x\to\act\varDelta^{\phi x}_{\phi x}\) at every \(x\), which will be injective as soon as \(\phi\) is faithfully transversal; by Proposition~\ref{prop:12B.5.5} in the appendix, any injective homomorphism of differentiable groups must be a monomorphism.

\begin{lem}\label{lem:14A.1.8} Any\/ \(C^\infty\)\mdash full, transversal homomorphism of differentiable groupoids is faithfully transversal. \end{lem}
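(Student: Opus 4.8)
The plan is to fix a base point $x$ of $\varGamma$, write $y=f(x)$ for its image under the base map $f:X\to Y$ induced by $\phi$, and prove that $\ttangent x\phi$ is injective; since $\ttangent x\phi$ is already surjective by Proposition~\ref{prop:12B.16.5}, it will then be bijective, which is exactly what faithful transversality demands. The whole argument runs parallel to the proof of Proposition~\ref{prop:12B.16.6}, with the $C^\infty$\mdash pullback universal property of the square~\eqref{eqn:12B.16.8} (available there because $\phi$ is a weak equivalence) replaced throughout by $C^\infty$\mdash fullness. As in the proof of Proposition~\ref{prop:12B.16.5}, transversality provides an open neighbourhood $V\ni y$ in $Y$ together with a $C^\infty$ section $(a,\delta):V\to X\ftimes ft\varDelta$ of the map~\eqref{eqn:12B.16.7} through $(x,1_y)$: so $a:V\to X$ is $C^\infty$ with $a(y)=x$, and $\delta:V\to\varDelta$ is a differentiable $s^\varDelta$\mdash section through $1_y$ with $t^\varDelta\circ\delta=f\circ a$. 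Both halves of the argument are manufactured out of $a$ and $\delta$ by means of $C^\infty$\mdash fullness.

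The first half produces a convenient local source section of $\varGamma$. Choose an open $U\ni x$ in $X$ with $f(U)\subset V$, let $f_U:U\to V$ be the induced map, and apply $C^\infty$\mdash fullness to $\delta\circ f_U:U\to\varDelta$ and $(\id_U,a\circ f_U):U\to X\times X$; these are compatible because $s^\varDelta\circ\delta=\id_V$ and $t^\varDelta\circ\delta=f\circ a$. One obtains an open cover $c:U'\onto U$ and a $C^\infty$ map $g:U'\to\varGamma$ with $\phi\circ g=\delta\circ f_U\circ c$, $s\circ g=c$, and $t\circ g=a\circ f_U\circ c$. Picking $u_0\in U'$ over $x$ and reparametrising by the diffeomorphism to which $c$ restricts on the component containing $u_0$, I get a local $s^\varGamma$\mdash section $\bar\gamma$ of $\varGamma$, defined on an open $U''\ni x$, with $s\circ\bar\gamma=\id_{U''}$, $t\circ\bar\gamma=a\circ f|_{U''}$, and $\bar\gamma(x)=:a_0$; since $t(a_0)=a(y)=x$ and $\phi(a_0)=\delta(y)=1_y$, the arrow $a_0$ lies in $\varGamma^x_x$. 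Being a local source section through $a_0$, $\bar\gamma$ realises $T_x(t\circ\bar\gamma)=T_ya\circ T_xf$ as a representative of the effect $\eff^\varGamma_x(a_0)$ on $\ttangent x\varGamma$; and by Lemma~\ref{lem:12B.16.2} this effect lies in $\GL(\ttangent x\varGamma)$, whatever $a_0$ happens to be.

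For injectivity I would take $v\in T_xX$ with $T_xf(v)\in\ltangent y\varDelta$ and show $v\in\ltangent x\varGamma$. Since $\ltangent y\varDelta=\im(T_{[y]}\incl^\varDelta_y)$, pick a $C^\infty$ curve $\mu$ in $O^\varDelta_y$ through $[y]$ with $T_{[y]}\incl^\varDelta_y(\dot\mu(0))=T_xf(v)$, lift $\mu$ along the submersion $\pr^\varDelta_y$ to a $C^\infty$ curve $k$ in $\varDelta^y$ with $k(0)=1_y$, and set $\beta:=\incl^\varDelta_y\circ\mu=t^\varDelta\circ k$, a curve in $Y$ with $\beta(0)=y$ and $\dot\beta(0)=T_xf(v)$, which I shrink so that it takes values in $V$. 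Then $t\mapsto\delta(\beta(t))\cdot k(t)$ is a well-defined $C^\infty$ curve $h:I\to\varDelta$ (the product makes sense because $s^\varDelta(\delta(\beta(t)))=\beta(t)=t^\varDelta(k(t))$) running from $f(x)$ to $f(\rho(t))$, where $\rho:=a\circ\beta$ is a curve in $X$ with $\rho(0)=x$; moreover $(s^\varDelta,t^\varDelta)\circ h=(f\times f)\circ(x,\rho)$, with $x$ the constant curve. Applying $C^\infty$\mdash fullness once more gives an open cover $c':I'\onto I$ and a $C^\infty$ curve $g':I'\to\varGamma$ with $\phi\circ g'=h\circ c'$, $s\circ g'=x$ (constant), and $t\circ g'=\rho\circ c'$. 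Since $s\circ g'$ is constant, $g'$ factors through the source fibre $\varGamma^x$; writing $t\circ g'=\incl^\varGamma_x\circ(\pr^\varGamma_x\circ g')$ and using that $\pr^\varGamma_x$ is a submersion, the derivative of $t\circ g'$ at a point $u_0'$ over $0$ (where $c'$ restricts to a diffeomorphism) lies in the image of $T\incl^\varGamma_x$ at $[g'(u_0')]$, which by~\eqref{eqn:12B.16.2} equals $\ltangent x\varGamma$ because $t(g'(u_0'))=\rho(0)=x$. On the other hand that derivative is a nonzero scalar multiple of $\dot\rho(0)=T_ya(T_xf(v))$. Hence $T_ya(T_xf(v))\in\ltangent x\varGamma$. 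Combining this with the first half: $T_ya\circ T_xf$ induces the invertible map $\eff^\varGamma_x(a_0)$ on $\ttangent x\varGamma$ and sends $v$ into $\ltangent x\varGamma$, so $\eff^\varGamma_x(a_0)$ kills the class of $v$ in $\ttangent x\varGamma$; that class must therefore vanish, i.e.~$v\in\ltangent x\varGamma$. Thus $\ttangent x\phi$ is injective, hence bijective, and $\phi$ is faithfully transversal.

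The only genuinely delicate point is that $C^\infty$\mdash fullness is weaker than the pullback property used for weak equivalences: it delivers lifts only after passing to an open cover, and the isotropy arrow $a_0=\bar\gamma(x)$ returned by the first step need not be the unit $1_x$ (so $\eff^\varGamma_x(a_0)$ need not be the identity, contrary to the situation in Proposition~\ref{prop:12B.16.6}). Neither of these is a real obstruction here, because we only ever extract first-order information at the single base point $x$\textemdash where the relevant covering maps restrict to diffeomorphisms\textemdash and because $\eff^\varGamma_x(a_0)$ is invertible regardless of the value of $a_0$.
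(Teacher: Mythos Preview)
Your argument is correct, but the paper proceeds differently and rather more economically. Instead of working in the general case and adapting the two-step template of Proposition~\ref{prop:12B.16.6}, the paper first reduces to the situation where $\phi$ covers the identity on a common base $X$, via the factorization $\phi=\pi\circ\phi'$ through the pullback groupoid $f^*\varDelta$ (as in the proof of Proposition~\ref{prop:14A.1.3}); one checks using faithfulness of the weak equivalence $\pi$ that $\phi'$ inherits $C^\infty$\mdash fullness, and then, since $T_x\phi'=\id_{T_xX}$, injectivity of $\ttangent x\phi'$ amounts simply to the inclusion $\ltangent x\varDelta\subset\ltangent x\varGamma$. That inclusion is established by a single application of $C^\infty$\mdash fullness: a curve $h_\tau\in\varDelta^x$ with $h_0=1_x$ lifts (after passing to an open cover of the parameter interval) to a curve $g_\tau\in\varGamma^x$, which one normalizes to $g_\tau g_0^{-1}$ so as to start at $1_x$.

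The trade-off is this: your route is self-contained and makes transparent exactly where $C^\infty$\mdash fullness substitutes for the pullback square~\eqref{eqn:12B.16.8}, at the cost of two invocations of fullness and the extra bookkeeping around the possibly non-unit isotropy arrow $a_0$ (handled via invertibility of $\eff^\varGamma_x(a_0)$). The paper's route offloads the ``general base map'' complication onto the already-available factorization and Proposition~\ref{prop:12B.16.6}, leaving a short curve-lifting argument; the normalization $g_\tau\mapsto g_\tau g_0^{-1}$ plays the same role as your appeal to invertibility of $\eff^\varGamma_x(a_0)$, but more cheaply.
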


\begin{proof} Let \(\phi:\varGamma\to\varDelta\) be any such homomorphism. Let us temporarily assume that \(\varGamma\) and \(\varDelta\) are groupoids over the same base manifold \(X\) and that \(\phi\) covers the identity over \(X\). Then for any given point \(x\in X\) the linear map \(\ttangent x\phi\) will be injective if and only if the tangent base map \(T_x\phi=\id_{T_xX}\) at \(x\) satisfies the condition
\begin{equation*}
 (T_x\phi)\delta x=\delta x\in\ltangent x\varDelta
\quad \seq \quad
 \delta x\in\ltangent x\varGamma
\end{equation*}
for all \(\delta x\in T_xX\). Thus, suppose \(\delta x\in\ltangent x\varDelta\), i.e., \(\delta x=\dd\tau th_\tau\) for some \(C^\infty\) path \(\tau\mapsto h_\tau\in\varDelta(x,-)\) such that \(h_0=1_x\). Since \(\phi\) is \(C^\infty\)\mdash full, we can lift \(\tau\mapsto h_\tau\) locally around zero to a \(C^\infty\) path \(\tau\mapsto g_\tau\in\varGamma(x,-)\); it will not be restrictive to assume that \(g_0=1_x\) (for otherwise we can simply take \(\tau\mapsto g_\tau g_0^{-1}\)). Then
\begin{equation*}
\textstyle%
 \delta x=\dd\tau th_\tau=\dd\tau t\phi(g_\tau)=\dd\tau tg_\tau\in\ltangent x\varGamma\textpunct.
\end{equation*}

Now suppose \(\phi:\varGamma\to\varDelta\) is completely general. As in the proof of Proposition~\ref{prop:14A.1.3}, we can write \(\phi\) as the composition of a homomorphism \(\phi'\) covering the identity on the bases with a weak equivalence \(\pi\). Using the faithfulness of \(\pi\), it is straightforward to check that \(\phi'\) must be itself \(C^\infty\)\mdash full. Since weak equivalences are always faithfully transversal by Proposition~\ref{prop:12B.16.6}, we are finally reduced to the special situation considered at the beginning. \end{proof}

Clearly, any \(C^\infty\)\mdash full homomorphism \(\phi:\varGamma\to\varDelta\) will induce an epimorphism of differentiable groups \(\phi^x_x:\varGamma^x_x\onto\varDelta^{\phi x}_{\phi x}\) at each base point \(x\) of \(\varGamma\). Thus, by the remarks preceding the last lemma, when \(\phi\) is also transversal the quotient homomorphism \(\act\phi^x_x:\act\varGamma^x_x\to\act\varDelta^{\phi x}_{\phi x}\) will be an isomorphism (of differentiable groups); therefore, by the lemma, we will have an isomorphism
\begin{equation}
\label{eqn:14A.1.9}
 (\act\phi^x_x,\ttangent x\phi):(\act\varGamma^x_x,\ttangent x\varGamma)\longsimto(\act\varDelta^{\phi x}_{\phi x},\ttangent{\phi x}\varDelta)
\end{equation}
between the effective infinitesimal model for \(\varGamma\) at \(x\) and that for \(\varDelta\) at \(\phi x\).

\section{The category of reduced Lie groupoids}\label{sec:3}

Throughout the rest of the article we shall let \(\LGpd\) stand for the category of Lie groupoids and Lie groupoid homomorphisms. We shall use the notation `\(h_1\equiv h_2\pmod{\nef\varDelta}\)' as an abbreviation for `\em\(sh_1=sh_2=y\), \(th_1=th_2\) and\/ \(h_2^{-1}h_1\in\nef\varDelta^y_y\)\em'. Since \(\nef\varDelta\) is a normal,%
\footnote{\label{ftn:normal}%
That is to say, for each \(h\in\varDelta\), letting \(c_h:\varDelta^{sh}_{sh}\simto\varDelta^{th}_{th}\) indicate the group isomorphism \(k\mapsto hkh^{-1}\) (conjugation by \(h\)), we have \(c_h(\nef\varDelta^{sh}_{sh})\subset\nef\varDelta^{th}_{th}\).%
} %
totally isotropic (abstract) subgroupoid of \(\varDelta\), it follows that the binary relation \(\mathord\equiv\pmod{\nef\varDelta}\) thus defined on the arrows of \(\varDelta\) is a (categorical, abstract) congruence (although in general not a {\em regular} congruence in the sense of Appendix~\ref{sec:A}).

\begin{defn}\label{defn:14A.1.9} Let \(\varGamma\mathrel{%
\xymatrix@1@C=1.3em@M=.em{%
 \ar@<+.5ex>[r]^\phi
 \ar@<-.5ex>[r]_\psi
 &
}}\varDelta\) be any pair of homomorphisms between two given Lie groupoids \(\varGamma\) and \(\varDelta\). By a \emph{natural congruence} \(\tau\) between \(\phi\) and \(\psi\), in symbols `\(\tau:\phi\natcongto\psi\)', we shall mean a map \(\tau:X\to\varDelta\) of class \(C^\infty\) from the base manifold \(X\) of \(\varGamma\) into the manifold of arrows of \(\varDelta\) such that \(\tau(x)\in\varDelta(\phi x,\psi x)\) for all \(x\in X\) and such that for all \(g\in\varGamma\)
\begin{equation*}
 \tau(tg)\phi(g)\equiv\psi(g)\tau(sg)\pmod{\nef\varDelta}\textpunct.
\end{equation*} \end{defn}

Obviously, any ordinary natural isomorphism \(\tau:\phi\natisoto\psi\) is a~fortiori a natural congruence \(\tau:\phi\natcongto\psi\). The collection of all natural congruences between homomorphisms \(\varGamma\to\varDelta\) is closed under the obvious operations of composition \(\tau'\tau\) and inversion \(\tau^{-1}\). On each hom-set \(\LGpd(\varGamma,\varDelta)\) the binary relation
\begin{equation*}
 \phi\natcong\psi
\quad
 \aeq
\quad
 \text{\em there exists a natural congruence between\/ $\phi$ and\/ $\psi$\em}
\end{equation*}
is therefore an equivalence. If we now let \(\LGpd*\) denote the subcategory of \(\LGpd\) with the same objects (Lie groupoids) and with morphisms all those \(\phi\in\LGpd(\varGamma,\varDelta)\) such that \(\phi^x_x(\nef\varGamma^x_x)\subset\nef\varDelta^{\phi x}_{\phi x}\) for every base point \(x\) of \(\varGamma\), it is immediate to check that the above equivalence \(\natcong\) gives rise to a categorical congruence on \(\LGpd*\). We shall let \(\catquot{\LGpd*}{\natcong}\) denote the resulting quotient category whose morphisms are the \(\natcong\)\mdash equivalence classes of morphisms in \(\LGpd*\) (compare \cite[II.8]{MacLane}). The notation \(\natcongclass\phi\) will be used to indicate the \(\natcong\)\mdash class of a homomorphism \(\phi\in\Mor(\LGpd*)\).

Let \(\mathcal E\subset\Mor(\LGpd*)\) denote the collection of all the (\(C^\infty\)\mdash)full and completely transversal homomorphisms of Lie groupoids (we know that any transversal homomorphism must lie within \(\LGpd*\), by the results of Section~\ref{sec:1}). By abuse of notation, we shall use the same letter to indicate the image of \(\mathcal E\) under the quotient projection functor \(\LGpd*\to\catquot{\LGpd*}{\natcong}\emphpunct,\phi\mapsto\natcongclass\phi\); the intended meaning will always be clear from the context. We claim that the localized category \(\catquot{\LGpd*}{\natcong}[\mathcal E^{-1}]\) admits a calculus of right fractions. The remainder of the present section will essentially be devoted to checking that the pertinent axioms \cite[I.2.2]{GZ} are indeed satisfied for \(\mathcal E\). Before doing this, however, we want to illustrate (and justify) the preceding definitions by means of a few examples.

\begin{exm}\label{exm:1} The goal of our first example is to show that there are normally many homomorphisms of Lie groupoids which do not lie in the subcategory \(\LGpd*\), and thus that passing from \(\LGpd\) to \(\LGpd*\) already makes a relevant difference from the point of view of what ``reduced smooth stacks'' turn out to be eventually in comparison with ordinary smooth stacks.

Let the orthogonal group \(\GO(2)\) act on \(\R^2\) by matrix multiplication (canonical action). Similarly, let the special orthogonal group \(\SO(3)\) act on \(\R^3\) by matrix multiplication. Let \(G\subset\SO(3)\) denote the closed subgroup formed by all matrices \(P\in\SO(3)\) such that \(Pe_3=\pm e_3\), where \(e_3\) indicates the standard basis vector \((0,0,1)\) in \(\R^3\). We have a continuous and injective group homomorphism \(\theta:\GO(2)\emto G\) given by \[A\mapsto\within[%
\begin{array}{c|c}
 A & 0
\\
\hline
 0 & \det A
\end{array}]\textpunct,\] the determinant \(\det:\GO(2)\to\{\pm1\}\) being itself a continuous homomorphism. Obviously, the map \(f:\R^2\emto\R^3\) given by \((x,y)\mapsto(x,y,0)\) is equivariant with respect to \(\theta\). The Lie groupoid homomorphism%
\footnote{\label{ftn:Ex1}%
Recall that if \(G\) is a differentiable group acting say from the left on a differentiable manifold \(X\) in a \(C^\infty\) fashion then one can form the corresponding \emph{translation groupoid} \(G\ltimes X=G\times X\rightrightarrows X\). This is the differentiable groupoid whose source map is the projection from \(G\times X\) on \(X\), target map is the group action, and arrow composition law is given by the formula \((g',gx)(g,x)=(g'g,x)\). Clearly, when \(G\) is a Lie group and \(X\) is a smooth manifold, \(G\ltimes X\) is a Lie groupoid.}
\[\theta\ltimes f:\GO(2)\ltimes\R^2\longto G\ltimes\R^3\] sends the ineffective isotropic arrow \((\within(%
\begin{smallmatrix}
 1 & 0
\\
 0 & -1
\end{smallmatrix});1,0)\in\GO(2)\ltimes\R^2\) to the isotropic arrow \((\within(%
\begin{smallmatrix}
 1 & 0  & 0
\\
 0 & -1 & 0
\\
 0 & 0  & -1
\end{smallmatrix});1,0,0)\in G\ltimes\R^3\), whose effect is evidently non-triv\-i\-al since the \(G\)\mdash orbit through \((1,0,0)\) is the circle \(\setofall*(x,y,0)\suchthat x^2+y^2=1\endsetofall\). \end{exm}

\begin{exm}\label{exm:2} The next examples demonstrate that the relation of natural congruence is quite coarser than the relation of natural isomorphism in most cases. We shall exhibit several pairs of homomorphisms \(\phi,\psi\in\Mor(\LGpd*)\) such that \(\phi\) is naturally congruent but not isomorphic to \(\psi\).

{\bf(a)}\emphpunct{} Let \(\SO(2)\) act canonically on \(\R^2\), and let \(\SO(3)\) act similarly on \(\R^3\). Consider the map \(f:\R^2\to\R^3\) defined by \((x,y)\mapsto(0,0,1)\). This map is obviously equivariant relative to the Lie group homomorphism \(\theta:\SO(2)\emto\SO(3)\) given by \(A\mapsto\within[%
\begin{array}{c|c}
 A & 0
\\
\hline
 0 & 1
\end{array}]\). For any Lie group endomorphism \(\eta\) of \(\SO(2)\), the same map is also \((\theta\circ\eta)\)\mdash equivariant. The two Lie groupoid homomorphisms [which, trivially, lie in \(\Mor(\LGpd*)\)] \[\theta\ltimes f\emphpunct,(\theta\circ\eta)\ltimes f:\SO(2)\ltimes\R^2\longto\SO(3)\ltimes\R^3\] are naturally congruent, because all the isotropic arrows in \(\SO(3)\ltimes\R^3\) outside the origin are ineffective, but not naturally isomorphic (unless of course \(\eta=\id\)), because all the isotropy groups in \(\SO(3)\ltimes\R^3\) outside the origin are abelian.

{\bf(b)}\emphpunct{} Let \(\omega:\R\to\R\) be an arbitrary \(C^\infty\) real-val\-ued function of one real variable. Let \(\R=(\R,+)\sidetext[=\text{ the additive group of the real numbers}]\) act on the product \(\C\times\R\) by operating on the first factor by rotations with frequency \(\omega\)\emphpunct: \({\theta\cdot(z,t)}=(e^{i\omega(t)\theta}z,t)\). Any \(C^\infty\) real-val\-ued function of one real variable \(\varphi:\R\to\R\) such that \(\abs{\varphi(t)}=\abs{\omega(t)}\) for all \(t\) gives rise to a Lie groupoid homomorphism \[\R\ltimes[\C\times\R]\longto\SO(3)\ltimes\R^3\emphpunct,(\theta;z,t)\mapsto(\within(%
\begin{smallmatrix}
 \cos{\varphi(t)\theta} & \within"-\sin{\varphi(t)\theta}" & 0
\\[.4ex]
 \sin{\varphi(t)\theta} &          \cos{\varphi(t)\theta}  & 0
\\[.4ex]
 0                      & 0                                & 1
\end{smallmatrix});0,0,t)\] which belongs to \(\Mor(\LGpd*)\) and whose kernel contains the totally isotropic, normal subgroupoid \(K\) of \(\R\ltimes[\C\times\R]\) defined by the expression
\begin{equation*}
 K=\setofall*(\theta;z,t)\suchthat[\omega(t)\neq0\et\omega(t)\theta\in2\pi\Z]\vel\theta=0\endsetofall\textpunct.
\end{equation*}
Since \(K\) is, in fact, a closed regular kernel of constant dimension in \(\R\ltimes[\C\times\R]\) (compare Appendix~\ref{sec:A} and Example~\ref{exm:4} below), this homomorphism factors through the quotient groupoid \(\varGamma=(\R\ltimes[\C\times\R])/K\rightrightarrows\C\times\R\), thus giving rise to a Lie groupoid homomorphism \(\phi:\varGamma\to\SO(3)\ltimes\R^3\) which still belongs to \(\Mor(\LGpd*)\). Now, let \(\varphi_0\), \(\varphi_1\) be any two functions as above, and let \(\phi_0\), \(\phi_1\) denote the Lie groupoid homomorphisms \(\varGamma\to\SO(3)\ltimes\R^3\) they give rise to. Clearly, one has \(\phi_0\natcong\phi_1\) as soon as \(\varphi_0(0)=\varphi_1(0)\). However \(\phi_0\not\natiso\phi_1\) unless \(\varphi_0(t)=\varphi_1(t)\) for all \(t\neq0\). \end{exm}

\begin{exm}\label{exm:3} Let \(\varGamma\gpbundleto M\gpbundlefro\varDelta\) be any two Lie group bundles over the same base manifold \(M\). Any two homomorphisms \(\phi,\psi:\varGamma\to\varDelta\) covering the identity map on the base are naturally congruent. In particular, any endomorphism \(\phi:\varGamma\to\varGamma\) covering the identity map on the base is naturally congruent to the identity homomorphism \(\id:\varGamma\to\varGamma\). It follows that any two Lie group bundles over the same base manifold are isomorphic when regarded as objects of the category \(\catquot{\LGpd*}{\natcong}\). \end{exm}

\begin{npar}\label{npar:axioms} We proceed to check that the class of morphisms \(\mathcal E\subset\Mor(\catquot{\LGpd*}{\natcong})\) satisfies the axioms for a calculus of right fractions \cite[I.2.2]{GZ}:

\begin{AxiomI} \em The class of morphisms\/ \(\mathcal E\subset\Mor(\catquot{\LGpd*}{\natcong})\) is multiplicative viz.~contains the identities and is closed under composition\em. This is obvious a~fortiori, since \(\mathcal E\) is multiplicative already as a subclass of \(\Mor(\LGpd*)\). Cf.~Lemmas~\ref{lem:14A.1.1}~and \ref{lem:14A.1.5}. \end{AxiomI}

\begin{AxiomII} \em Given any pair of morphisms\textup{*}
\begin{equation*}
 \begin{split}
\xymatrix@C=3em@R=4.2ex{%
 &
	\varGamma'
	\ar@{~>}[d]^{\natcongclass\phi\in\mathcal E}
\\
 \varDelta
 \ar[r]^{\natcongclass\psi}
 &
	\varGamma
}\end{split}
\quad
 \text{ there exists a commutative diagram}
\quad
 \begin{split}
\xymatrix@C=3em@R=4.2ex{%
 \varDelta'
 \ar@{~>}[d]|(.559){}^{\natcongclass{\phi'}\in\mathcal E}
 \ar@{-->}[r]^{\smash[t]{\natcongclass{\psi'}}}
 &
	\varGamma'
	\ar@{~>}[d]
\\
 \varDelta
 \ar[r]
 &
	\varGamma
}\end{split}
\end{equation*}\em
(*for visual immediacy, we shall be using wavy arrows to represent morphisms belonging to \(\mathcal E\)). Since \(\phi\) belongs to \(\mathcal E\), by definition it must be transversal so that we may form the weak pullback (which in our case is clearly also a {\em smooth} groupoid)
\begin{equation*}
\xymatrix@C=3em{%
 \varDelta\mathbin{_\psi\sqcap_\phi}\varGamma'
 \ar[d]^{\pr_\varDelta}
 \ar[r]^-{\pr_{\varGamma'}}
 &
	\varGamma'
	\ar[d]^\phi_(.3){}="s"
\\
 \varDelta
 \ar[r]^-\psi^(.7){}="t"
 \ar@{=>}@/_/"s";"t"
 &
	\varGamma
}\end{equation*}
(the diagram here is supposed to commute up to natural isomorphism). By the lemmas \ref{lem:14A.1.2}~and \ref{lem:14A.1.6}, \(\phi\in\mathcal E\) implies \(\pr_\varDelta\in\mathcal E\). Thus, we will be done if we set \(\varDelta'=\varDelta\mathbin{_\psi\sqcap_\phi}\varGamma'\), \(\phi'=\pr_\varDelta\) and \(\psi'=\pr_{\varGamma'}\), provided we show that \(\pr_{\varGamma'}\) belongs to \(\Mor(\LGpd*)\). To this end, suppose \(h'\) is an ineffective isotropy arrow in \(\varDelta\mathbin{_\psi\sqcap_\phi}\varGamma'\). Since both \(\psi\) and \(\pr_\varDelta\) belong to \(\Mor(\LGpd*)\), so will do their composition \(\psi\circ\pr_\varDelta\) and hence also the composite homomorphism \(\phi\circ\pr_{\varGamma'}\) as this is naturally isomorphic to \(\psi\circ\pr_\varDelta\).%
\footnote{\label{ftn:AxII}%
More in general, whenever \(\phi\natcong\psi\in\LGpd(\varGamma,\varDelta)\) and \(\phi\in\LGpd*(\varGamma,\varDelta)\) then also \(\psi\in\LGpd*(\varGamma,\varDelta)\). The proof is immediate.%
} %
We will then have \(\phi(\pr_{\varGamma'}h')\in\nef\varGamma\). Now \(\phi\) is faithfully transversal by Lemma~\ref{lem:14A.1.8}. We can therefore invoke Corollary~\ref{cor:12B.16.4} to yield the desired conclusion that \(\pr_{\varGamma'}h'\in\nef\varGamma'\). \end{AxiomII}

\begin{AxiomIII} \em Given any morphisms
\begin{equation*}
\xymatrix@C=3.1em{%
 \varGamma
 \ar@<+.5ex>[r]^{\natcongclass{\psi_1}}
 \ar@<-.5ex>[r]_{\natcongclass{\psi_2}}
 &
	\varDelta
	\ar@{~>}[r]^-{\natcongclass\phi\in\mathcal E}
	&
		\varDelta'
}\end{equation*}
with the property that\/ \(\natcongclass\phi\natcongclass{\psi_1}=\natcongclass\phi\natcongclass{\psi_2}\), there is also some morphism\/ \(\natcongclass\pi:\varGamma'\mathrel{%
\xymatrix@1@C=1.3em@M=.em{%
 \ar@{~>}[r]
 &
}}\varGamma\) belonging to\/ \(\mathcal E\) with the property that\/ \(\natcongclass{\psi_1}\natcongclass\pi=\natcongclass{\psi_2}\natcongclass\pi\)\em. Let \(X\), \(Y\) and \(Y'\) respectively denote the base manifolds of the Lie groupoids \(\varGamma\), \(\varDelta\) and \(\varDelta'\). By assumption, we have \(\phi\circ\psi_1\natcong\phi\circ\psi_2\), in other words, there exists some natural congruence \(\tau':\phi\circ\psi_1\natcongto\phi\circ\psi_2\). We make use of the \(C^\infty\)\mdash fullness of \(\phi\) in the situation depicted below.
\begin{equation}
\label{eqn:14A.1.11}
 \begin{split}
\xymatrix@R=1.7ex{%
 X'
 \ar@{-->}@(dr,l)[drr]^(.3)\tau
 \ar@{-->>}[r]^c
 &
	X
	\ar@(d,l)[dddr]_{(\psi_1,\psi_2)}
	\ar@(r,ul)[drr]^{\tau'}
\\
 &&
		\varDelta
		\ar[dd]^{(s,t)}
		\ar[r]^-\phi
		&
			\varDelta'
			\ar[dd]^{(s,t)}
\\ \\
 &&
		Y\times Y
		\ar[r]^-{\phi\times\phi}
		&
			Y'\times Y'
}\end{split}
\end{equation}
The map \(c:X'\onto X\) is an open cover, in particular, a surjective submersion. Hence it certainly makes sense to pull back the Lie groupoid \(\varGamma\) along it.
\begin{equation}
\label{eqn:14A.1.12}
 \begin{split}
\xymatrix{%
 c^*\varGamma
 \ar[d]^{(s,t)}
 \ar[r]^-\pi
 &
	\varGamma
	\ar[d]^{(s,t)}
\\
 X'\times X'
 \ar[r]^-{c\times c}
 &
	X\times X
}\end{split}
\end{equation}
The resulting pullback groupoid, denoted \(\varGamma'=c^*\varGamma\rightrightarrows X'\), will be a {\em Lie} groupoid as well,%
\footnote{\label{ftn:AxIII}%
It can always be assumed that \(X'\) is a smooth manifold. Indeed, in any case \(X'\) is a (non-emp\-ty) Hausdorff manifold of constant dimension, just because so is \(X\). Since \(X\) is second countable, we can always find a countable open cover \(X''\onto X\) subordinate to \(X'\onto X\) via some map \(X''\to X'\).%
} %
and its canonical projection \(\pi\) onto \(\varGamma\) will be a weak equivalence of Lie groupoids and thus, a~fortiori, an element of \(\mathcal E\) and hence a morphism in the category \(\LGpd*\). We contend that the \(C^\infty\) map \(\tau\) in \eqref{eqn:14A.1.11} must be a natural congruence \(\psi_1\circ\pi\natcongto\psi_2\circ\pi\). Certainly \((s,t)\circ\tau=(\psi_1,\psi_2)\circ c=(\psi_1\circ\pi,\psi_2\circ\pi)\) by the commutativity of \eqref{eqn:14A.1.11} and \eqref{eqn:14A.1.12}. Moreover, for every arrow \(g'\in\varGamma'\),
\begin{align}
\notag%
 \phi\bigl(\tau(tg')\psi_1(\pi g')\bigr)
&=\tau'(t\pi g')[\phi\circ\psi_1](\pi g')
\\
\notag%
&\equiv[\phi\circ\psi_2](\pi g')\tau'(s\pi g')\pmod{\nef\varDelta'}
\\
\label{eqn:14A.1.13}
&=\phi\bigl(\psi_2(\pi g')\tau(sg')\bigr)\textpunct.
\end{align}
By Lemma~\ref{lem:14A.1.8}, \(\phi\) must be a faithfully transversal homomorphism. We immediately conclude from Corollary~\ref{cor:12B.16.4} that the following implication holds:
\begin{equation*}
 \phi(h_1)\equiv\phi(h_2)\pmod{\nef\varDelta'}
\quad
 \seq
\quad
 h_1\equiv h_2\pmod{\nef\varDelta}\textpunct.
\end{equation*}
We thus deduce from \eqref{eqn:14A.1.13} that
\begin{equation*}
 \tau(tg')[\psi_1\circ\pi](g')\equiv[\psi_2\circ\pi](g')\tau(sg')\pmod{\nef\varDelta}\textpunct,
\end{equation*}
as desired. \end{AxiomIII} \end{npar}

\begin{npar}\label{npar:14A.1.10} Our proof that the multiplicative system \(\mathcal E\) in the category \(\catquot{\LGpd*}{\natcong}\) does indeed admit a calculus of right fractions is thus finished. For convenience, we recall how the corresponding explicit model for the localization \(\catquot{\LGpd*}{\natcong}[\mathcal E^{-1}]\) is obtained.

Let us set \(\mathfrak L=\catquot{\LGpd*}{\natcong}\) for brevity. One starts by introducing the ``category of right fractions'' \(\mathfrak L\mathcal E^{-1}\):
\begin{itemize} \itemsep=0pt
 \item The objects of \(\mathfrak L\mathcal E^{-1}\) are simply those of \(\mathfrak L\) (namely Lie groupoids);
 \item The morphisms in \(\mathfrak L\mathcal E^{-1}(\varGamma,\varDelta)\) are equivalence classes of ``spans'' in \(\mathfrak L\)
\begin{equation*}
 \xymatrix@C=2.5em{%
 \varDelta
 &
	\varGamma'
	\ar[l]_-\alpha
	\ar@{~>}[r]^-\varepsilon
	&
		\varGamma
}\qquad \text{with $\varepsilon\in\mathcal E$,}
\end{equation*}
 two such spans \((\alpha_1,\varepsilon_1)\) and \((\alpha_2,\varepsilon_2)\) being equivalent whenever there is a commutative diagram
\begin{equation*}
 \begin{split}
\xymatrix@C=4.3em@R=3.4ex{%
 &
	\varGamma'_1
	\ar[dl]_{\alpha_1}
	\ar@{~>}[dr]^{\varepsilon_1}
\\
 \varDelta
 &
	\varGamma''
	\ar[u]
	\ar[d]
	\ar@{~>}[r]^(.4){\varepsilon'}
	&
		\varGamma
\\
 &
	\varGamma'_2
	\ar[ul]^{\alpha_2}
	\ar@{~>}[ur]_{\varepsilon_2}
}\end{split}\qquad \text{with $\varepsilon'\in\mathcal E$;}
\end{equation*}
 \item The composition of morphisms in \(\mathfrak L\mathcal E^{-1}\) is given by \(%
 \def\objectstyle{\scriptstyle}%
 \vcenter{%
 \xymatrix@1@C=1em@R=2.5ex{%
  &     &
		\bullet
		\ar[dl] \ar@{~>}[dr]
 \\
  &
	\bullet
	\ar[dl] \ar@{~>}[dr]
	&       &
			\bullet
			\ar[dl] \ar@{~>}[dr]
 \\
  \bullet
  &     &
		\bullet
		&       &
				\bullet
 }}\) (using Axiom~II).
\end{itemize}
Let us indicate by \(\alpha/\varepsilon\) the class of a span \((\alpha,\varepsilon)\) in \(\mathfrak L\mathcal E^{-1}(\varGamma,\varDelta)\). There is a canonical functor from the category of fractions \(\mathfrak L\mathcal E^{-1}\) into the localization \(\mathfrak L[\mathcal E^{-1}]\); it is the identity on objects, and it sends \(\alpha/\varepsilon\) to \(\pr(\alpha)\pr(\varepsilon)^{-1}\), where \(\pr:\mathfrak L\to\mathfrak L[\mathcal E^{-1}]\) denotes the universal localization functor. This functor \(\mathfrak L\mathcal E^{-1}\to\mathfrak L[\mathcal E^{-1}]\) is full and faithful \cite[Proposition~2.4, p.~14]{GZ} and hence an isomorphism of categories. \end{npar}

\begin{defn}\label{defn:14A.1.11} We shall call \(\catquot{\LGpd*}{\natcong}[\mathcal E^{-1}]\) the category of \emph{reduced Lie groupoids} and use the shorthand \(\RedLGpd\) for it. \end{defn}

The reader might be wondering whether it is possible to give a more ``geometric'' characterization of the category which we have just defined (say, up to equivalence). For instance, one might consider the idea of focusing attention on some collection of smooth groupoids whose members are ``geometrically reduced'' in a suitable sense\textemdash for example, they do not contain non-triv\-i\-al closed regular kernels of constant dimension (we account for the terminology used here in the appendix)\textemdash and then asking whether the morphisms in \(\RedLGpd\) between groupoids belonging to this collection may be represented by morphisms of a more familiar type, like for instance the generalized homomorphisms of the usual Morita category. This sort of approach runs into a number of technical difficulties. (We shall hint at a couple of them presently.) Even if one is willing to believe that these difficulties may be overcome, the construction underlying Definition~\ref{defn:14A.1.11} has undeniable advantages, such as extreme simplicity, full generality, and a very convincing justification at both intuitive and technical level.

To begin with, we point out that all of the Lie groupoids occurring in the preceding examples \ref{exm:1}~and \ref{exm:2} are ``geometrically reduced'' in the above sense. [Actually, this is true of Example~\ref{exm:2}(b) only when the set \(\{\omega\neq0\}\) is dense within \(\R\). Ending up with only ``geometrically reduced'' groupoids was the main reason behind the apparently unnecessary complications in that example.] On the basis of those examples, it seems impossible to dismiss the notion of natural congruence altogether. Furthermore, even if we agree to talk only about ``geometrically reduced'' groupoids, it is still necessary to show that for each Lie groupoid a natural ``geometrically reduced model'' may be produced in a systematic way. It is not at all obvious how to do that in general. A sample of the pathologies which arise in practice and which make the task arduous is given in our next example.

\begin{exm}\label{exm:4} Let \(L\to M\) be an arbitrary complex line bundle of class \(C^\infty\) over a connected, smooth manifold \(M\). The projection down to \(M\) of a bundle element \(l\in L\) will be denoted by \([l]\). Let \(\omega:M\to\R\) be an arbitrary real-val\-ued function of class \(C^\infty\) on \(M\). Generalizing the construction given in Example~\ref{exm:2}(b), we let the additive group of the real numbers \(\R=(\R,+)\) act on the manifold \(L\) by fiberwise rotations of frequency \(\omega\)\textemdash in other words, we set \(\theta\cdot l=e^{i\omega([l])\theta}l\) for all \(\theta\in\R\emphpunct,l\in L\)\textemdash and then form the translation groupoid \(\R\ltimes L\rightrightarrows L\). The totally isotropic subgroupoid
\begin{equation*}
 K=\setofall*(\theta,l)\suchthat\omega([l])\neq0\et\theta\in2\pi\omega([l])^{-1}\Z\endsetofall\cup\setofall(0,l)\suchthat l\in L\endsetofall
\end{equation*}
is evidently normal, closed, and smooth since it can be parameterized by means of local \(C^\infty\) sections to the groupoid source projection \((\theta,l)\mapsto l\). By the theory of regular kernels (reviewed at the beginning of Appendix~\ref{sec:A}), the quotient groupoid
\begin{equation*}
 \varGamma=(\R\ltimes L)/K\rightrightarrows L
\end{equation*}
is naturally equipped with the structure of a smooth groupoid. We contend that, provided the open set \(U=\{\omega\neq0\}\) and the interior \(V\) of the vanishing locus of \(\omega\) are both non-emp\-ty, the codomain \(\varGamma'\) of any Lie groupoid homomorphism \(\phi:\varGamma\mathrel{%
\xymatrix@1@C=1.3em@M=.em{%
 \ar@{~>}[r]
 &
}}\varGamma'\) belonging to \(\mathcal E\) must contain some non-triv\-i\-al regular kernel which is also closed and of constant dimension. In particular, \(\varGamma\) is unlikely to admit a ``geometrically reduced model'' in the above sense. To begin with, since the existence of such non-triv\-i\-al kernels is a Morita invariant property, by the argument already used in the proof of Proposition~\ref{prop:14A.1.3} it will not be restrictive to assume that \(\phi\) is a full homomorphism covering the identity on \(L\). Let us set \(K'=\ker\phi\). By our hypothesis about \(\phi\), \(K'\) is a regular kernel in \(\varGamma\) and thus \(K'\subset\nef\varGamma\). In particular for every \(l\in L\) such that \(\omega([l])\neq0\) we must have \((K')^l_l\subset\nef\varGamma^l_l=\{1_l\}\). It follows that \((K')^l_l=\{1_l\}\) for every \(l\in L\) not lying over \(V\). Indeed, if \([\theta_0,l_0]\in K'\) is such that \([l_0]\) lies within the closure of the open set \(U\) in \(M\), and so we can find a sequence \(\{l_n\}_{n=1}^\infty\) converging to \(l_0\) in \(L\) such that \(\omega([l_n])\neq0\) for all \(n\), then since by the regularity of \(K'\) there exists some local \(C^\infty\) section \(\sigma\) through \([\theta_0,l_0]\) to the projection \(K'\onto L\) we have
\begin{equation*}
 [\theta_0,l_0]=\sigma(l_0)=\lim_n\sigma(l_n)=\lim_n{}[0,l_n]=[0,l_0]\textpunct.
\end{equation*}
By hypothesis, \(\varGamma'\) is a smooth groupoid, so \(K'\) must be closed and of constant dimension. Since the restriction of \(K'\) over \(\restr L\to U\endrestr\) coincides with the unit bisection of \(\varGamma\), we have \(\dim K'=\dim L\). Thus, any local \(C^\infty\) section to the projection \(K'\onto L\) is also a local parameterization of \(K'\). This immediately implies that the subset
\begin{equation*}
 K''=\tfrac12K'=\setofall*[\tfrac12\theta,l]\suchthat[\theta,l]\in K'\endsetofall\subset\varGamma_{(1)}
\end{equation*}
is itself a closed regular kernel of constant dimension in \(\varGamma\). Since \(K''\supsetneqq K'\), its image \(\phi(K'')\) will be a non-triv\-i\-al closed regular kernel of constant dimension in \(\varGamma'\). \end{exm}

\section{Comparison with the category of reduced orbifolds}\label{sec:4}

The elements of the class \(\mathcal E\) are not the only morphisms of the category \(\LGpd*\) which become invertible under the canonical functor
\begin{equation}
\label{eqn:14A.1.15}
 \LGpd*\longto\catquot{\LGpd*}{\natcong}[\mathcal E^{-1}]\textpunct.
\end{equation}
By way of example, consider any Lie groupoid homomorphism \(\psi:\varGamma\to\varDelta\) for which there exists some element \(\phi:\varDelta\mathrel{%
\xymatrix@1@C=1.3em@M=.em{%
 \ar@{~>}[r]
 &
}}\varDelta'\) of \(\mathcal E\) such that the composition \(\phi\circ\psi\) also lies in \(\mathcal E\). Clearly, \(\psi\) must be a homomorphism in \(\LGpd*\), and its image under \eqref{eqn:14A.1.15} must be invertible. We shall call any element of \(\Mor(\LGpd*)\) with this property an \emph{effective equivalence}. In order to maintain our intuitive interpretation of \(\RedLGpd\) as a category of ``effective transversal geometry types'', we need to make absolutely sure\textemdash among other things\textemdash that the lemmas \ref{lem:14A.1.7}~and \ref{lem:14A.1.8} continue to hold for effective equivalences.

There is a canonical functor \(\LGpd*\subset\LGpd\to\Top\) into the category of topological spaces and continuous maps, which to each Lie groupoid \(\varGamma\rightrightarrows M\) assigns the corresponding orbit space \(M/\varGamma\) and to each Lie groupoid homomorphism \(\psi:\varGamma\to\varDelta\) the (continuous) map of \(M/\varGamma\) into \(N/\varDelta\) induced by \(\psi\). Evidently, any two naturally congruent homomorphisms induce the same map between the orbit spaces. By the universal property of quotient categories \cite[Section~II.8]{MacLane}, we obtain a well-de\-fined functor \(\catquot{\LGpd*}{\natcong}\to\Top\). Next, we make use of the universal property of the localization functor \(\catquot{\LGpd*}{\natcong}\to\catquot{\LGpd*}{\natcong}[\mathcal E^{-1}]\) (compare \cite[Lemma~I.1.2]{GZ}) in combination with Lemma~\ref{lem:14A.1.7} to obtain a functor
\begin{equation*}
\xymatrix@C=2.5em{%
 \LGpd*
 \ar[d]^{\text{\eqref{eqn:14A.1.15}}}
 \ar[r]
 &
	\Top
	\ar@{<--}[dl]!UR
\\
 \RedLGpd
}\end{equation*}
which we agree to call the ``coarse moduli space functor''. We have proved:

\begin{stmt}\label{stmt:14A.1.12} Every effective equivalence of Lie groupoids\/ \(\phi:\varGamma\to\varDelta\) induces a homeomorphism\/ \(M/\varGamma\approxto N/\varDelta\) between the orbit space of\/ \(\varGamma\) and the orbit space of\/ \(\varDelta\). \end{stmt}

The generalization to effective equivalences of Lemma~\ref{lem:14A.1.8} and of its consequences, although conceptually not more involved than the generalization of Lemma~\ref{lem:14A.1.7} which we have just obtained, requires some extra preparation work. For every pair of Lie groupoids \(\varGamma\rightrightarrows M\emphpunct,\varDelta\rightrightarrows N\), let us define \(\mathfrak S_{\varGamma,\varDelta}\) to be the set of all 4-tuples \((x,y;\theta,\lambda)\) consisting of a base point \(x\in M\), a base point \(y\in N\), a Lie group homomorphism \(\theta:\act\varGamma^x_x\to\act\varDelta^y_y\) and a \(\theta\)\mdash equivariant linear map \(\lambda:\ttangent x\varGamma\to\ttangent y\varDelta\). We have two obvious maps \(\alpha_{\varGamma,\varDelta}\) and \(\beta_{\varGamma,\varDelta}\) of \(\mathfrak S_{\varGamma,\varDelta}\) into respectively \(M\) and \(N\) and, for each triplet of Lie groupoids \(\varGamma,\varGamma',\varGamma''\), an obvious composition operation%
\begin{subequations}
\label{eqn:14A.1.17}
\begin{equation}
\label{eqn:14A.1.17a}
 \mathfrak S_{\varGamma',\varGamma''}\ftimes{\alpha_{\varGamma',\varGamma''}}{\beta_{\varGamma,\varGamma'}}\mathfrak S_{\varGamma,\varGamma'}\longto\mathfrak S_{\varGamma,\varGamma''}\mathpunct.
\end{equation}
Let \(S_{\varGamma,\varDelta}\) denote the quotient of the set \(\mathfrak S_{\varGamma,\varDelta}\) with respect to the equivalence relation
\begin{multline}
\label{eqn:14A.1.17b}
 (x,y;\theta,\lambda)\sim(x',y';\theta',\lambda')
 \aeq
 \exists g\in\varGamma(x,x')\emphpunct{}\exists h\in\varDelta(y,y')\emphpunct{}[\theta'\circ c_g=c_h\circ\theta%
\\%
 \et \lambda'\circ\varepsilon(g)=\varepsilon(h)\circ\lambda]\textpunct,
\end{multline}
where \(c_g:\act\varGamma^x_x\simto\act\varGamma^{x'}_{x'}\) and similarly \(c_h\) mean ``conjugation'', and where \(\varepsilon(g)\) and \(\varepsilon(h)\) as usual mean ``effect''. Evidently, we have induced maps*
\begin{equation}
\label{eqn:14A.1.17c}
 M/\varGamma\xfro{a_{\varGamma,\varDelta}}S_{\varGamma,\varDelta}\xto{b_{\varGamma,\varDelta}}N/\varDelta
\quad
 \text{and}
\quad
 S_{\varGamma',\varGamma''}\ftimes{a_{\varGamma',\varGamma''}}{b_{\varGamma,\varGamma'}}S_{\varGamma,\varGamma'}\longto S_{\varGamma,\varGamma''}
\end{equation}
\end{subequations}
[*the quotient composition operation being well defined essentially because of the formulas below, which hold for every isotropic arrow \(g\in\varGamma^x_x\):
\begin{equation*}
 \theta\circ c_g=c_{\theta g}\circ\theta\textpunct;
\quad
 \lambda\circ\varepsilon(g)=\varepsilon(\theta g)\circ\lambda\text{\quad (\em$\theta$\mdash equivariance of $\lambda$\em)].}
\end{equation*}
We introduce an auxiliary category, \(\Skel\), which we call the category of ``transversal skeletons'' of Lie groupoids. Lie groupoids are the objects of \(\Skel\). For every pair of Lie groupoids \(\varGamma,\varDelta\) the hom-set \(\Skel(\varGamma,\varDelta)\) consists of all those global sections \(\sigma:M/\varGamma\to S_{\varGamma,\varDelta}\) to the map \(a_{\varGamma,\varDelta}:S_{\varGamma,\varDelta}\to M/\varGamma\) which have the property that the composition \(b_{\varGamma,\varDelta}\circ\sigma:M/\varGamma\to N/\varDelta\) is continuous. The composition of morphisms in \(\Skel\) is defined in terms of the operation~\eqref{eqn:14A.1.17c} in the evident way. We have a canonical functor \(\LGpd*\to\Skel\) which to each Lie groupoid homomorphism \(\psi:\varGamma\to\varDelta\) assigns the global \(a_{\varGamma,\varDelta}\)\mdash section \([x]\mapsto[x,\psi x;\act\psi^x_x,\ttangent x\psi]\). Reasoning as we did before,%
\footnote{\label{ftn:place}%
This is the place where Lemma~\ref{lem:natural} (or rather its obvious generalization to natural congruences) is needed.%
} %
we conclude that there must be a factorization of this functor through \(\RedLGpd\).
\begin{equation*}
\xymatrix@C=2.5em{%
 \LGpd*
 \ar[d]
 \ar[r]
 &
	\Skel
	\ar@{<--}[dl]!UR
\\
 \RedLGpd
}\end{equation*}

\begin{stmt}\label{stmt:14A.1.13} For any effective equivalence of Lie groupoids\/ \(\phi:\varGamma\to\varDelta\) and for any base point\/ \(x\) of\/ \(\varGamma\), the Lie group homomorphism\/ \(\act\phi^x_x:\act\varGamma^x_x\to\act\varDelta^{\phi x}_{\phi x}\) and the\/ \(\act\phi^x_x\)\mdash equivariant linear map\/ \(\ttangent x\phi:\ttangent x\varGamma\to\ttangent{\phi x}\varDelta\) are bijective. Thus, we still have an isomorphism of the form\/ \eqref{eqn:14A.1.9} between the effective infinitesimal model for\/ \(\varGamma\) at\/ \(x\) and that for\/ \(\varDelta\) at\/ \(\phi x\). \end{stmt}

\begin{rmkterm*} In \cite{Tr4}, we called \emph{effective} a Lie groupoid representation whose kernel consists of ineffective isotropic arrows. More in general, we may call \emph{effective} a homomorphism of differentiable groupoids which enjoys the same property. Our terminology `effective equivalence' is consistent with this use of the adjective `effective'. Indeed, by \ref{stmt:14A.1.13}, any effective equivalence is faithfully transversal and hence satisfies the hypotheses of Proposition~\ref{cor:12B.16.4}(b), which then implies the desired property. \end{rmkterm*}

One of the claims we made in the introduction to the present article was that our notion of ``reduced smooth stack'' was going to generalize the notion of reduced orbifold. It is now time to substantiate that claim. In doing this, we shall make essential use of the existence of a calculus of fractions for the localized category \(\catquot{\LGpd*}{\natcong}[\mathcal E^{-1}]\). Without a calculus of fractions, the task of comparing the above-men\-tioned two notions would be very likely an insurmountable mess. We shall adopt the general point of view on orbifolds that is advocated for instance in \cite{MP,Mo1}. In practice, for our purposes this means that the category of reduced orbifolds we want to compare with our category of reduced Lie groupoids is obtained in conformity with the following stepwise procedure. Start with the category of effective orbifold groupoids.%
\footnote{\label{ftn:orbifold}%
For us, an \emph{orbifold groupoid} will be a proper, \'etale, smooth groupoid. Our definition is slightly different from\textemdash but essentially equivalent to\textemdash the definition given in \cite{Mo1}. We remind the reader that a differentiable groupoid is said to be \emph{\'etale}, if its source and its target are \(C^\infty\)\mdash\'etale maps (local diffeomorphisms), and \emph{proper}, if it is Hausdorff and for each compact subset \(K\) of its base manifold the set \(s^{-1}(K)\cap t^{-1}(K)\) is compact.%
} %
Then pass to the quotient category where any two homomorphisms are identified whenever there exists a natural isomorphism connecting them. Finally formally invert the morphisms corresponding to weak equivalences.

We shall place our discussion in a context which is slightly more general than strictly needed. Let \(\effLGpd\subset\LGpd\) denote the full subcategory consisting of all effective Lie groupoids. Since by definition the ineffective subbundle of an effective Lie groupoid is trivial, \(\effLGpd\) is actually a full subcategory of \(\LGpd*\). Moreover, by the same token, the two equivalence relations \(\natiso\) (natural isomorphism) and \(\natcong\) (natural congruence) turn out to coincide when restricted to the morphisms of this subcategory. We thus have a canonical imbedding of categories (i.e., a fully faithful functor which is ``identical'' on objects)
\begin{equation}
\label{eqn:14A.1.19}
 \catquot{\effLGpd}{\natiso}\imbedto\catquot{\LGpd*}{\natcong}\textpunct.
\end{equation}
Let \(\mathcal W\subset\Mor(\LGpd*)\) denote the class of all weak equivalences of Lie groupoids, and set \(\mathcal W_\mathrm{eff}:=\mathcal W\cap\Mor(\effLGpd)\). It is standard routine to check that the localized category \(\catquot{\effLGpd}{\natiso}[\mathcal W_\mathrm{eff}^{-1}]\) admits a calculus of right fractions. By the universal property of localization, the imbedding~\eqref{eqn:14A.1.19} induces a canonical functor
\begin{equation*}
 \catquot\effLGpd\natiso[\mathcal W_\mathrm{eff}^{-1}]\longto\catquot{\LGpd*}{\natcong}[\mathcal E^{-1}]\textpunct.
\end{equation*}

\begin{stmt}\label{stmt:14A.1.14} The above functor is fully faithful and hence an imbedding of categories. \end{stmt}

\begin{proof} ({\it Fullness.}) Let a span be given \(\varDelta\mathrel{%
\xymatrix@1@C=2.5em@M=.em{%
 \ar@{<-}[r]^{\natcongclass\psi}
 &
}}\varGamma'\mathrel{%
\xymatrix@1@C=2.5em@M=.em{%
 \ar@{~>}[r]^{\natcongclass\phi\in\mathcal E}
 &
}}\varGamma\) representing a morphism in \(\RedLGpd\) between two given effective Lie groupoids \(\varGamma\) and \(\varDelta\). Since \(\phi\in\mathcal E\) is transversal, we must have \(\phi(\nef\varGamma')\subset\nef\varGamma\sidetext(=1\text{ because $\varGamma$ is effective})\). Hence \(\nef\varGamma'\subset\ker\phi\). Since in addition \(\phi\) is {\em faithfully}\/ transversal, for every isotropic arrow \(g'\) in \(\varGamma'\) we must have \(\phi(g')\in\nef\varGamma\seq g'\in\nef\varGamma'\), whence a~fortiori \(\ker\phi\subset\nef\varGamma'\). Thus \(\ker\phi=\nef\varGamma'\). Moreover, since \(\psi\in\Mor(\LGpd*)\), we must have \(\psi(\nef\varGamma')\subset\nef\varDelta\sidetext(=1\text{ because $\varDelta$ is effective})\) and therefore \(\ker\psi\supset\nef\varGamma'=\ker\phi\). Now, if as in the proof of Proposition~\ref{prop:14A.1.3} we decompose \(\phi\) into an epimorphism identical on the bases \(\phi'\) followed by a weak equivalence \(\pi\), then from the first homomorphism theorem for Lie groupoids it follows that \(\psi\) admits a unique factorization \(\psi=\psi'\circ\phi'\) through \(\phi'\). It is evident that the span \((\psi',\pi)\) [in which \(\pi\in\mathcal W_\mathrm{eff}\) because \(\varGamma\) is effective and effectiveness is a Morita invariant property] represents the same morphism in \(\RedLGpd\) as \((\psi,\phi)\) does.
\begin{equation*}
\xymatrix@C=4.5em@R=3.4ex{%
 &
	\varGamma'
	\ar[dl]_\psi
	\ar@{~>}[dr]^{\phi\in\mathcal E}
\\
 \varDelta
 &
	\varGamma'
	\ar@{=}[u]_\id
	\ar@{->>}[d]^{\phi'}
	&
		\varGamma
\\
 &
	*+<1.6ex>{\,\bullet\,}
	\ar@{-->}[ul]^{\psi'}
	\ar@{~>}[ur]_{\pi\in\mathcal W_\mathrm{eff}}
}\end{equation*}

({\it Faithfulness.}) Suppose given a commutative diagram in \(\catquot{\LGpd*}{\natcong}\) of the form
\begin{equation*}
\xymatrix@C=4.4em@R=3.4ex{%
 &
	\varGamma'_1
	\ar[dl]_{\natcongclass{\psi_1}}
	\ar@{~>}[dr]^{\mkern 9mu\natcongclass{\phi_1}\in\mathcal W}
\\
 \varDelta
 &
	\varGamma'
	\ar[u]_(.4){\natcongclass{\chi_1}}
	\ar[d]^(.45){\natcongclass{\chi_2}}
	&
		\varGamma
\\
 &
	\varGamma'_2
	\ar[ul]^{\natcongclass{\psi_2}}
	\ar@{~>}[ur]_{\mkern 9mu\natcongclass{\phi_2}\in\mathcal W}
}\end{equation*}
where\emphpunct: (a)~\(\varGamma,\varGamma'_1,\varGamma'_2,\varDelta\in\Ob(\effLGpd)\)\emphpunct; (b)~\(\phi_1\) and \(\phi_2\) are weak equivalences\emphpunct; (c)~\(\natcongclass{\phi_1\circ\chi_1}=\natcongclass{\phi_2\circ\chi_2}=\natcongclass\phi\) where \(\phi\in\mathcal E\). To begin with, notice that since \(\nef\varGamma=1\) the condition \(\phi_i\circ\chi_i\natcong\phi\) (natural congruence) is equivalent to the condition \(\phi_i\circ\chi_i\natiso\phi\) (natural isomorphism). Similarly, since \(\nef\varDelta=1\), we must have \(\psi_1\circ\chi_1\natiso\psi_2\circ\chi_2\). Now each \(\phi_i\) is a faithful functor, hence for every isotropic arrow \(g'\) in \(\varGamma'\) we must have \(\chi_i(g')=1\aeq\phi_i\chi_i(g')=1\aeq\phi(g')=1\). Thus \(\ker\chi_i=\ker\phi\sidetext(=\nef\varGamma'\text{, as noticed in the previous paragraph})\). Write \(\phi\) as the composition of an epimorphism identical on the bases \(\phi':\varGamma'\onto\varGamma''\) with a weak equivalence \(\pi:\varGamma''\to\varGamma\). Necessarily \(\varGamma''\) is effective, because so is \(\varGamma\). By the first homomorphism theorem for Lie groupoids, we have a unique factorization \(\chi_i=\chi_i'\circ\phi'\) of each \(\chi_i\) through \(\phi'\). Trivially, each \(\chi_i'\) must be a homomorphism in \(\LGpd*\). Now in general
\begin{equation*}
 \alpha\circ\phi'\natiso\beta\circ\phi'
\quad
 \text{implies}
\quad
 \alpha\natiso\beta
\end{equation*}
for any pair of homomorphisms \(\alpha,\beta\) from \(\varGamma''\) into any given other Lie groupoid. [Since \(\phi'\) is identical on the bases, any natural isomorphism \(\tau:\alpha\circ\phi'\natisoto\beta\circ\phi'\) may also be interpreted as a natural isomorphism \(\tau:\alpha\natisoto\beta\).] Since \((\phi_i\circ\chi_i')\circ\phi'=\phi_i\circ\chi_i\natiso\phi=\pi\circ\phi'\), by this observation we see that \(\phi_i\circ\chi_i'\natiso\pi\in\mathcal W\). By the same token, \(\psi_1\circ\chi_1'\natiso\psi_2\circ\chi_2'\). We thus obtain the following commutative diagram in \(\catquot{\effLGpd}{\natiso}\).
\begin{equation*}
 \begin{split}
\xymatrix@C=4.3em@R=3.4ex{%
 &
	\varGamma'_1
	\ar[dl]_{\natisoclass{\psi_1}}
	\ar@{~>}[dr]^{\mkern 9mu\natisoclass{\phi_1}}
\\
 \varDelta
 &
	\varGamma''
	\ar[u]_(.4){\natisoclass{\chi_1'}}
	\ar[d]^(.45){\natisoclass{\chi_2'}}
	\ar@{~>}[r]^(.45){\natisoclass{\pi}}
	&
		\varGamma
\\
 &
	\varGamma'_2
	\ar[ul]^{\natisoclass{\psi_2}}
	\ar@{~>}[ur]_{\mkern 9mu\natisoclass{\phi_2}}
}\end{split}\qquad \text(\phi_1,\phi_2,\pi\in\mathcal W_\mathrm{eff}\text)
\end{equation*}
(The notation \([\void]\) is supposed to indicate the \(\natiso\)\mdash class of a homomorphism.) \end{proof}

Let \(\efforbGpd\subset\effLGpd\) denote the full subcategory with objects all effective orbifold groupoids. (Recall that by an orbifold groupoid we mean a proper \'etale Lie groupoid; cf.~Footnote~\ref{ftn:orbifold}.) By the universal property of localization, we have canonical functors
\begin{equation*}
 \catquot\efforbGpd\natiso[\mathcal W_\mathrm{efforb}^{-1}]\longto\catquot\effLGpd\natiso[\mathcal W_\mathrm{eff}^{-1}]\longto\catquot\LGpd\natiso[\mathcal W^{-1}]\textpunct;
\end{equation*}
here of course we have set \(\mathcal W_\mathrm{efforb}:=\mathcal W\cap\Mor(\efforbGpd)\). Each one of the two localized categories at the extremes of the sequence admits a calculus of right fractions. The right-hand canonical functor is trivially an imbedding, essentially because effectiveness is a Morita invariant property. The other canonical functor is also an imbedding. This is almost as trivial to see, by using the fact that any foliation groupoid is the codomain of a weak equivalence with domain an \'etale groupoid.%
\footnote{\label{ftn:foliation}%
By a \emph{foliation groupoid}, in general, we mean a differentiable groupoid of constant dimension which has only discrete (i.e., zero-di\-men\-sion\-al) isotropy groups. For any foliation groupoid \(\varGamma\rightrightarrows X\) there is some integer \(0\leqq r\leqq\dim X\) such that \(\dim\ttangent x\varGamma=r\) for all \(x\in X\). Equivalently, the \(\varGamma\)\mdash orbits all have the same dimension \(\dim O^\varGamma_x=\dim X-r\) throughout \(X\). In fact
\begin{equation*}
 r=2\dim X-\dim\varGamma\textpunct.
\end{equation*}
When \(\varGamma\rightrightarrows X\) is a {\em smooth} foliation groupoid, we can find a complete transversal \(\incl_T:T\to X\) with domain a smooth manifold \(T\) of dimension \(r\). The pullback groupoid \(\varPi=\incl_T^*\varGamma\rightrightarrows T\) will be a smooth groupoid with all orbits zero-di\-men\-sion\-al and all isotropy groups discrete. Clearly, any such groupoid must be \'etale.%
} %
Combining \ref{stmt:14A.1.14} with the above remarks, we obtain:

\begin{cor}\label{cor:comparison} The canonical functor of localized categories
\begin{equation*}
 \RedOrb:=\catquot\efforbGpd\natiso[\mathcal W_\mathrm{efforb}^{-1}]\longto\catquot{\LGpd*}{\natcong}[\mathcal E^{-1}]=:\RedLGpd
\end{equation*}
imbeds the category of reduced orbifolds into that of reduced Lie groupoids. \end{cor}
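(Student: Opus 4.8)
The plan is to exhibit the functor of the statement as a composite of two fully faithful functors and then invoke that a composite of category embeddings is again an embedding. The full inclusions $\efforbGpd\subset\effLGpd\subset\LGpd*$, together with the universal property of localisation, supply the factorisation
\[
 \RedOrb=\catquot\efforbGpd\natiso[\mathcal W_\mathrm{efforb}^{-1}]\longto\catquot\effLGpd\natiso[\mathcal W_\mathrm{eff}^{-1}]\longto\catquot{\LGpd*}{\natcong}[\mathcal E^{-1}]=\RedLGpd\textpunct.
\]
The second arrow is fully faithful: that is precisely \ref{stmt:14A.1.14}. So the whole task reduces to showing that the first arrow — the canonical functor between localised categories induced by the inclusion $\efforbGpd\subset\effLGpd$ — is fully faithful. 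Both its source and its target admit a calculus of right fractions (standard, as already noted in the surrounding discussion), so I would argue throughout in terms of spans $\varDelta\xleftarrow{\psi}\varGamma'\xrightarrow{\phi}\varGamma$ with $\phi$ a weak equivalence, two such spans being identified when they admit a common refinement over a third apex.

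The one geometric ingredient I would isolate first is this. Suppose a Lie groupoid $\varGamma'$ is connected by a weak equivalence to an effective orbifold groupoid $\varGamma$. Then $\varGamma'$ is itself a \emph{proper, effective, smooth foliation groupoid}, because properness, effectiveness and the property of being a foliation groupoid are all invariant under weak equivalence. Consequently, by the étale-transversal construction recalled before the statement (pull $\varGamma'$ back along a complete transversal of its base; cf.~Footnote~\ref{ftn:foliation}), there is a weak equivalence $\varPi\to\varGamma'$ whose domain $\varPi$ is étale, and $\varPi$, being Morita equivalent to $\varGamma'$, is again proper and effective, hence an object of $\efforbGpd$. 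This is the device that lets one replace apices of spans and of refinements lying merely in $\effLGpd$ by objects of $\efforbGpd$.

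Granting this, \textbf{fullness} is immediate: a morphism in $\catquot\effLGpd\natiso[\mathcal W_\mathrm{eff}^{-1}]$ between two objects of $\efforbGpd$ is represented by a span with some apex $\varGamma'\in\Ob(\effLGpd)$ and weak-equivalence right leg; precomposing both legs with the étale weak equivalence $\varPi\to\varGamma'$ produced above yields a span lying entirely in $\efforbGpd$, with right leg in $\mathcal W_\mathrm{efforb}$, which is equivalent to the original span — so the morphism is in the image of the functor. \textbf{Faithfulness} is handled in the same spirit: two spans between objects of $\efforbGpd$ that become equal after applying the functor are linked by a common refinement whose apex $\varGamma''$ lies in $\effLGpd$ and which carries a weak equivalence onto $\varGamma$; replacing $\varGamma''$ by its étale-transversal pullback $\varPi''\in\Ob(\efforbGpd)$ and composing the refinement data with $\varPi''\to\varGamma''$ exhibits the very same refinement already inside $\catquot\efforbGpd\natiso[\mathcal W_\mathrm{efforb}^{-1}]$.

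I do not anticipate a genuine obstacle. The substantive work behind the embedding property is carried by \ref{stmt:14A.1.14}, which is already in hand; what remains is light. The two points I would take care to verify honestly are (i) that the localised categories $\catquot\efforbGpd\natiso[\mathcal W_\mathrm{efforb}^{-1}]$ and $\catquot\effLGpd\natiso[\mathcal W_\mathrm{eff}^{-1}]$ indeed admit calculi of right fractions, and (ii) the bookkeeping in the geometric observation above — namely that the étale-transversal pullback of a proper effective foliation groupoid remains proper and effective — which is exactly the Morita invariance of those two properties.
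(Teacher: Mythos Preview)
Your proposal is correct and follows essentially the same route as the paper: factor through $\catquot\effLGpd\natiso[\mathcal W_\mathrm{eff}^{-1}]$, invoke \ref{stmt:14A.1.14} for the right-hand leg, and handle the left-hand leg by replacing span apices via the \'etale-transversal construction of Footnote~\ref{ftn:foliation}. The paper dispatches that left-hand leg in a single sentence (``almost as trivial to see''), whereas you have spelled out the fullness/faithfulness argument in terms of spans and refinements; your elaboration is sound and matches what the paper leaves implicit.
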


\appendix

\section{Remarks on the property of second countability}\label{sec:A}

The present appendix consists of substantially two parts. In the first part, we review a number of standard facts concerning congruences, quotients and kernels in the context of smooth groupoids. The whole part is essentially a straightforward exercise relying on Go\-de\-ment's theorem \cite{Se} and on the basic structure theory of differentiable groupoids (nothing beyond the material recollected at the beginning of Section~\ref{sec:1}). The reader may consult \cite{MaH} for a comprehensive discussion on the topic. The second part assembles a few results which generalize a well-known basic fact in the elementary theory of Lie groups\textemdash namely, that any bijective Lie-group homomorphism is a diffeomorphism\textemdash to Lie groupoids along various directions. Surprisingly, we could find no hint at these results in the literature. They appear to have been overlooked. A possible explanation is that they all depend in an essential way on the property of second countability, which is part of the specific notion of Lie groupoid we adopt here and also part of the standard notion of Lie group but is usually glossed over in most of the literature on Lie groupoids.

We shall say that a homomorphism \(\phi:\varGamma\to\varDelta\) between two arbitrary differentiable groupoids is an \emph{epimorphism}, in symbols `\(\phi:\varGamma\onto\varDelta\)', if the map \(\phi_{(1)}:\varGamma_{(1)}\to\varDelta_{(1)}\) induced by \(\phi\) between the manifolds of arrows is a surjective submersion. Necessarily then the map \(\phi_{(0)}:\varGamma_{(0)}\to\varDelta_{(0)}\) induced by \(\phi\) between the bases is also a surjective submersion. We shall say that \(\phi\) \emph{covers the identity} over a differentiable manifold \(X\) if \(\varGamma_{(0)}=\varDelta_{(0)}=X\) and \(\phi_{(0)}=\id_X\). We shall call \(\phi\) a \emph{monomorphism} if \(\phi_{(1)}\) is an injective immersion, and we shall express this circumstance symbolically by writing `\(\phi:\varGamma\into\varDelta\)'. Necessarily then \(\phi_{(0)}\) is also an injective immersion.%
\footnote{\label{ftn:conflict}%
Unfortunately there is some conflict between the terminology we are introducing here and the usual categorical nonsense. Our outlook is geometrical, in this case, rather than categorical. We have tried to keep our terminology as consistent as possible with that commonly used in Lie theory.}

Let now \(\varGamma\) be an arbitrary (small) category. Recall that a \emph{congruence} \(R\) on \(\varGamma\) is an equivalence relation \(R\subset\varGamma_{(1)}\times\varGamma_{(1)}\) on the arrows of \(\varGamma\) which enjoys the two properties listed below, where we write \(g_1\equiv g_2\) (\(R\) being understood) instead of \((g_1,g_2)\in R\).
\begin{enumerate} \def\labelenumi{(\roman{enumi})} \itemsep=0pt
 \item\(g_1\equiv g_2\seq(sg_1=sg_2\et tg_1=tg_2)\).
 \item\(g_1'\equiv g_2'\seq g''g_1'g\equiv g''g_2'g\).
\end{enumerate}
Given a congruence \(R\) on \(\varGamma\) there is on the quotient set \(\varGamma_{(1)}/R\) a unique structure of category with the same objects as \(\varGamma\) such that the quotient projection \(\pr_{(1)}:\varGamma_{(1)}\onto\varGamma_{(1)}/R\) becomes a functor covering the identity. (Compare \cite[II.8]{MacLane}.) The resulting \emph{quotient category} shall be denoted by \(\varGamma/R\) hereafter. Clearly \(\varGamma/R\) will be a groupoid whenever \(\varGamma\) is. Now suppose \(\varGamma\) is a differentiable groupoid. We shall say that a congruence \(R\) on \(\varGamma\) is \emph{regular} if \(R\) is a regular equivalence relation on the differentiable manifold \(\varGamma_{(1)}\) [viz.~\(R\subset\varGamma_{(1)}\times\varGamma_{(1)}\) is a differentiable submanifold and the projection onto the second factor restricts to a submersion of \(R\) onto \(\varGamma_{(1)}\)]. For any such congruence there exists on the quotient groupoid \(\varGamma/R\) a unique differentiable groupoid structure with the property that the projection functor \(\pr:\varGamma\onto\varGamma/R\) becomes an epimorphism which covers the identity. The expected universal property holds. The quotient differentiable groupoid \(\varGamma/R\) will be Hausdorff if, and only if, \(R\subset\varGamma_{(1)}\times\varGamma_{(1)}\) is a closed submanifold. If \(\varGamma\) is a second countable groupoid then the same will be true of \(\varGamma/R\).

We define the \emph{kernel} of an arbitrary homomorphism of differentiable groupoids \(\phi:\varGamma\to\varDelta\) to be the (abstract, set-the\-o\-ret\-ic) subgroupoid of \(\varGamma\)
\begin{equation*}
 \ker\phi:=\setofall*g\in\varGamma_{(1)}\suchthat sg=tg\et\phi(g)\in u(\varDelta_{(0)})\endsetofall\textpunct.
\end{equation*}
In a broader sense, by a \emph{kernel} in an arbitrary groupoid \(\varGamma\) we shall mean a totally isotropic subgroupoid which contains all the units of \(\varGamma\) and is normal (cf.~Footnote~\ref{ftn:normal} on page~\pageref{ftn:normal}). Of course, the kernel of a homomorphism turns out to be a kernel in this sense. It is not hard to show that if \(\phi:\varGamma\onto\varDelta\) is an epimorphism of differentiable groupoids which covers the identity then \(\ker\phi\) is actually a \emph{regular} kernel in \(\varGamma\), that is to say, a kernel which is also a differentiable subgroupoid of \(\varGamma\).%
\footnote{\label{ftn:subgroupoid}%
A \emph{differentiable subgroupoid} of \(\varGamma\) is a subgroupoid \(\varGamma'\) (in the abstract, set-the\-o\-ret\-ic sense) such that \({\varGamma'}_{(1)}\subset\varGamma_{(1)}\) is a differentiable submanifold and such that the source map of \(\varGamma\) restricts to a submersion of \({\varGamma'}_{(1)}\) onto a differentiable submanifold of \(\varGamma_{(0)}\). With the induced differentiable structure, a differentiable subgroupoid becomes a differentiable groupoid in its own right.%
} %
Moreover, under the same assumption, if the groupoid \(\varDelta\) is Hausdorff then \(\ker\phi\) is a closed subset of the manifold \(\varGamma_{(1)}\). Conversely, let \(K\) be an arbitrary kernel in a given differentiable groupoid \(\varGamma\rightrightarrows X\). The equivalence relation \(R_K\) on the arrows of \(\varGamma\) defined by
\begin{equation*}
 g_1R_Kg_2\aeq(tg_1=tg_2\et g_2^{-1}g_1\in K)
\end{equation*}
is an (abstract, categorical) congruence on \(\varGamma\). In this context, we shall write \(\varGamma/K\) for the quotient groupoid \(\varGamma/R_K\). The congruence \(R_K\) is regular if, and only if, the kernel \(K\) is regular. Whenever \(K\) is regular and the base \(X\) is Hausdorff, the subset \(R_K\subset\varGamma_{(1)}\times\varGamma_{(1)}\) is closed if, and only if, so is the subset \(K\subset\varGamma_{(1)}\). Notice that if the groupoid \(\varGamma\rightrightarrows X\) is of constant dimension and the kernel \(K\) is regular then, as a groupoid over \(X\), \(K\) is of constant dimension if, and only if, the same is true of the quotient \(\varGamma/K\), in which case \[\dim\varGamma/K=\dim\varGamma-\dim K+\dim X\textpunct.\] It follows that for any regular kernel \(K\) in a smooth groupoid \(\varGamma\) the quotient differentiable groupoid \(\varGamma/K\) is smooth if, and only if, \(K\) is closed and of constant dimension.

It is a well-known fact in the elementary theory of Lie groups that any bijective Lie-group homomorphism must be a diffeomorphism and hence an isomorphism; compare \cite[Exercise~2.22(2)]{BtD}. The relevant property of Lie groups, here, is second countability. In fact, the statement in question is false for general (i.e., non-Lie) differentiable groups. By way of example, let \(G\) be the one-di\-men\-sion\-al differentiable group obtained by endowing the additive group of euclidean 2-space \((\R^2,+)\) with the one-di\-men\-sion\-al differentiable structure resulting from the identification
\begin{equation}
\label{eqn:G}
\textstyle%
 \R^2=\coprod\limits_{t\in\R}\R\times\{t\}\textpunct,
\end{equation}
where the right-hand side denotes the disjoint union of uncountably many copies of \(\R\). The identifying map itself provides a bijective homomorphism of differentiable groups between \(G\) and \((\R^2,+)\sidetext[=\text{ standard, two-di\-men\-sion\-al, euclidean Lie group}]\) which is certainly not an isomorphism (not even a homeomorphism).

\begin{lem}\label{lem:12B.5.1} Suppose\/ \(f:X\to Y\) is a map of class\/ \(C^\infty\) from a second countable differentiable manifold of constant dimension\/ \(X\) into an arbitrary differentiable manifold\/ \(Y\). The following implications are true:
\begin{enumerate} \def\labelenumi{\upshape(\alph{enumi})} \itemsep=0pt
 \item If\/ \(f\) is both immersive and surjective then it is a local diffeomorphism.
 \item If\/ \(f\) is a bijective immersion then it is a (global) diffeomorphism.
\end{enumerate} \end{lem}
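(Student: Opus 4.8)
The plan is to reduce both statements to the inverse function theorem, the whole point being to exploit the standing hypotheses on $X$ (second countability and constant dimension, say $n:=\dim X$) to pin down the local dimension of $Y$ along the image of $f$. The only non-formal ingredient, and the sole place where second countability is genuinely used, will be a measure-zero argument.

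For part (a): since $f$ is immersive, $T_xf:T_xX\into T_{f(x)}Y$ is injective for every $x\in X$, so $\dim_{f(x)}Y\ge n$; moreover $T_xf$ is then an isomorphism — and hence, by the inverse function theorem, $f$ restricts to a diffeomorphism of a neighbourhood of $x$ onto an open subset of $Y$ — precisely when $\dim_{f(x)}Y=n$. Thus it suffices to exclude the possibility that $\dim_{f(x_0)}Y=m>n$ for some $x_0$. If this happened, local constancy of $\dim Y$ would supply an open $W\ni f(x_0)$ in $Y$ on which $\dim Y\equiv m$; then $f^{-1}(W)$ is open in $X$, hence itself second countable (as an open subspace of a second countable space) and of constant dimension $n$, and surjectivity of $f$ forces $f(f^{-1}(W))=W$ (note $x_0\in f^{-1}(W)$, so $W$ is nonempty). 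Here $Y$ being arbitrary — possibly non-Hausdorff, possibly of varying dimension — causes no difficulty, since $\dim Y$ is locally constant on any differentiable manifold and a measure-zero subset of an $m$-manifold still has empty interior.

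The key step will be the standard fact that a $C^\infty$ (indeed $C^1$) map from an $n$-dimensional \emph{second countable} manifold into an $m$-manifold with $n<m$ has image of measure zero: cover the source by countably many coordinate cubes, observe the map is Lipschitz on each, and run the usual Sard-type estimate — an $n$-cube cut into $k^{n}$ subcubes has Lipschitz image contained in $k^{n}$ balls of radius $O(1/k)$, whose total $m$-dimensional volume is $O(k^{\,n-m})\to 0$. Second countability is exactly what upgrades ``locally null'' to ``globally null''. Applied to $f$ restricted to $f^{-1}(W)$, this says that $W=f(f^{-1}(W))$ has measure zero in itself, which is absurd because a nonempty manifold of pure dimension $m$ contains a chart domain and hence a set of positive Lebesgue measure. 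This contradiction gives $\dim_{f(x)}Y=n$ for every $x$, and with it part (a).

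Part (b) will then be formal: a bijective immersion is in particular surjective, so by (a) it is a local diffeomorphism; a local diffeomorphism is continuous and open, so a bijective one is a homeomorphism whose inverse is smooth in a neighbourhood of each point — that is, a diffeomorphism. The one real obstacle is the measure-zero estimate in part (a), and that is precisely where second countability cannot be dropped: it fails for the non-second-countable $1$-manifold $G$ of \eqref{eqn:G}, whose identifying map onto the standard $(\R^2,+)$ is a bijective immersion that is not even a homeomorphism. Everything else — the inverse function theorem, local constancy of $\dim Y$, and ``continuous $+$ open $+$ bijective $\Rightarrow$ homeomorphism'' — is routine.
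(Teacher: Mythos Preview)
Your argument is correct and complete; the reduction of (b) to (a) is identical to the paper's, but your proof of (a) takes a genuinely different route. The paper argues via the Baire category theorem: after localizing so that \(Y=\R^m\) with \(m>n\), it uses the immersion hypothesis to cover \(X\) by opens \(U_i\) on which \(f\) is an embedding onto an \(n\)\mdash dimensional submanifold of \(\R^m\) (hence a set with empty interior), then exploits second countability to refine to a countable family of relatively compact \(V_k\Subset U_i\), so that \(\R^m=\bigcup_k\overline{f(V_k)}\) is a countable union of closed nowhere-dense sets, contradicting Baire. Your approach replaces Baire by a Lebesgue-measure (Sard-type) estimate, which has the minor advantage of not invoking the immersion hypothesis in the contradiction step\textemdash any \(C^1\) map from a second countable \(n\)\mdash manifold into an \(m\)\mdash manifold with \(m>n\) has null image\textemdash at the cost of importing a Lipschitz covering argument. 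Both approaches use second countability in exactly the same way (to pass from a local smallness statement to a global one via a countable cover), and both localize the target to a single chart; your phrasing would be slightly cleaner if you took \(W\) itself to be a chart domain \(\cong\R^m\) from the outset, which makes ``measure zero in \(W\)'' unambiguous without further comment.
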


\begin{proof} The second implication is an immediate consequence of the first one. In order to prove (a), we have to check that for any point \(x\) in \(X\) the rank \(m=\rk T_xf\sidetext[=\dim X]\) equals the local dimension \(n=\dim_{f(x)}Y\). By considering the restriction of \(f\) to the preimage of a local chart in \(Y\) centered at \(f(x)\), it will be no loss of generality to assume that \(Y=\R^n\) and that \(f(x)\) therein is the origin.

We argue by contradiction. Suppose \(m<n\). Since \(f:X\to\R^n\) is an immersion, we may find an open cover \(\setofall U_i\suchthat i\in I\endsetofall\) of \(X\) so that, for each \(i\in I\), \(f\) restricts to a diffeomorphism of \(U_i\) onto a differentiable submanifold \(f(U_i)\) of \(\R^n\). Since \(\dim f(U_i)=m<n\), each \(f(U_i)\) will be a subset of \(\R^n\) with empty interior. Let us fix a countable basis \(\setofall V_k\suchthat k\in\N\endsetofall\) for the topology of the manifold \(X\). Let \(S\) denote the set of all those \(k\in\N\) for which there is some \(i\in I\) such that \(V_k\Subset U_i\).%
\footnote{\label{ftn:Subset}%
For an arbitrary subset \(A\) of a topological space \(T\) we write \(A\Subset T\) (read: \(A\) is \emph{relatively compact} within \(T\)) as an abbreviation for `\(\clos_TA\) is compact'.%
} %
Clearly \(\setofall V_k\suchthat k\in S\endsetofall\) will be a basis for the topology of \(X\), in particular, \(X=\bigcup_{k\in S}V_k\). By the surjectivity of \(f\),
\begin{equation*}
\textstyle%
 \R^n=f(X)=f\within(\bigcup_{k\in S}V_k)=\bigcup_{k\in S}f(V_k)=\bigcup_{k\in S}\overline{f(V_k)}\textpunct.
\end{equation*}
Now each \(\overline{f(V_k)}=\clos_{\R^n}f(V_k)\) is a closed subset of \(\R^n\) of empty interior, because it is contained in some \(f(U_i)\) [since \(\overline{f(V_k)}=f(\clos_{U_i}V_k)\) when \(V_k\Subset U_i\) by the compactness of \(f(\clos_{U_i}V_k)\)]. But the union of countably many such subsets must be itself of empty interior by \mbox{Baire's} theorem \cite[p.~183]{analysisII}: contradiction. \end{proof}

\begin{prop}\label{prop:12B.5.2} Let\/ \(\phi:\varGamma\to\varDelta\) be a homomorphism of Lie groupoids which covers the identity and which is full as an abstract functor. Then\/ \(\phi\) is an epimorphism. \end{prop}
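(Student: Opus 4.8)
The plan is to reduce the statement to the second-countability results of the appendix (Lemma~\ref{lem:12B.5.1}) by means of an elementary fibrewise criterion for submersions, used twice. Write $\varGamma,\varDelta\rightrightarrows X$ for the common base---legitimate since $\phi$ covers the identity---and $\phi_{(1)}\colon\varGamma_{(1)}\to\varDelta_{(1)}$ for the induced map on arrows; surjectivity of $\phi_{(1)}$ is immediate from fullness, so the whole point is that $\phi_{(1)}$ is a submersion. The criterion I would first record is this: if $p\colon E\to B$ and $q\colon F\to B$ are submersions of differentiable manifolds and $\varphi\colon E\to F$ satisfies $q\circ\varphi=p$ and restricts, over every $b\in B$, to a submersion of the fibre $p^{-1}(b)$ onto the fibre $q^{-1}(b)$, then $\varphi$ is a submersion. (One-line tangent chase: given $w\in T_{\varphi(e)}F$, subtract off $T_{\varphi(e)}q(w)$ using submersivity of $p$, then correct the remainder inside the fibre of $q$ using submersivity of $\varphi$ along the fibre.) Applied to $s^\varGamma$, $s^\varDelta$ and $\phi_{(1)}$, which carries each source fibre $\varGamma^x$ into $\varDelta^{x}$, this reduces the problem to showing that the induced map $\phi^x\colon\varGamma^x\to\varDelta^{x}$ is a submersion for every $x\in X$.

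To prove that, I would bring in the orbit and isotropy structure of the source fibres. Since $\phi$ is full and covers the identity, $\varGamma$ and $\varDelta$ have the same orbits as subsets of $X$, so the induced orbit map $O^\phi_x\colon O^\varGamma_x\to O^\varDelta_{x}$ is bijective, and differentiating \eqref{eqn:12B.3.6b} shows it is an immersion. As $O^\varGamma_x$ is a second-countable differentiable manifold of constant dimension---second countable because it is a continuous open image of the second-countable manifold $\varGamma^x$, of constant dimension by the count $\dim O^\varGamma_x=\dim\varGamma-\dim X-\dim\varGamma^x_x$---Lemma~\ref{lem:12B.5.1}(b) makes $O^\phi_x$ a diffeomorphism. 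Similarly $\phi^x_x\colon\varGamma^x_x\to\varDelta^{x}_{x}$ is a surjective homomorphism of Lie groups (the isotropy groups being Hausdorff, second countable, and of constant dimension as differentiable groups), and a surjective homomorphism of Lie groups is automatically a submersion: it has constant rank, and a rank below $\dim\varDelta^{x}_{x}$ would force its image to be meagre, by Baire applied to a countable atlas of $\varGamma^x_x$, contradicting surjectivity---this is exactly the argument in the proof of Lemma~\ref{lem:12B.5.1}(a). Finally, $\phi^x$ lies over $O^\phi_x$ relative to the orbit projections $\pr^\varGamma_x$ and $\pr^\varDelta_{x}$ by \eqref{eqn:12B.3.6a}, and along each fibre of $\pr^\varGamma_x$ it becomes, under the canonical identifications of these fibres with the isotropy groups by right translation, a translate of $\phi^x_x$, hence fibrewise submersive; reparametrising the target over $O^\varGamma_x$ by the diffeomorphism $(O^\phi_x)^{-1}$ and invoking the fibrewise criterion a second time yields that $\phi^x$ is a submersion, which completes the proof.

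The real content lies in the middle step: that the orbit maps are diffeomorphisms and the isotropy homomorphisms are submersions. This is exactly where the second countability built into the notion of Lie groupoid is indispensable---without it the ``identity'' map from $(\R^2,+)$ with its standard structure onto the same group carrying the pathological structure \eqref{eqn:G} would be a full homomorphism covering the identity that is not even open, let alone an epimorphism. Everything else is routine manipulation of surjective submersions.
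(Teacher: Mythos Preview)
Your proof is correct and follows essentially the same route as the paper's: reduce to source-fibrewise submersivity, then use the orbit/isotropy decomposition of the source fibres together with Lemma~\ref{lem:12B.5.1}(b) to handle each piece. Your explicit ``fibrewise criterion'' is a clean packaging of what the paper does more ad hoc (a block-matrix argument for the final reduction, principality of the orbit fibrations for the intermediate one), and your direct constant-rank-plus-Baire argument for the isotropy homomorphism is equivalent to the paper's use of the first isomorphism theorem followed by Lemma~\ref{lem:12B.5.1}(b).

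One small slip in your closing remark: the counterexample runs the other way. The set-theoretic identity from the \emph{standard} $(\R^2,+)$ into the pathological one-dimensional group $G$ of~\eqref{eqn:G} is not even continuous---the preimage of an open leaf $\R\times\{t\}\subset G$ is a line in $\R^2$, which is not open---so it is not a homomorphism of differentiable groups at all. The relevant map is $G\to(\R^2,+)$, which \emph{is} smooth, bijective, hence full, and covers the identity on the one-point base, yet is not a submersion for dimension reasons; and it is $G$ that fails second countability, which is exactly the point.
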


\begin{proof} Let \(M=\varGamma_{(0)}=\varDelta_{(0)}\) denote the common base of \(\varGamma\) and \(\varDelta\). In the first place, we show that for each point \(x\) in \(M\) the map \(O^\phi_x:O^\varGamma_x\to O^\varDelta_{\phi x=x}\) characterized by either of the equations~\eqref{eqn:12B.3.6} is a (global) diffeomorphism. Since by our assumptions the map \(\phi^x:\varGamma^x\to\varDelta^x\) is surjective, Equation~\eqref{eqn:12B.3.6a} entails that \(O^\phi_x\) is surjective. Similarly, since \(\phi_{(0)}=\id_M\), Equation~\eqref{eqn:12B.3.6b} implies that \(O^\phi_x\) is both injective and immersive. Any orbit of a Lie groupoid is a manifold of constant dimension%
\footnote{\label{ftn:constdim}%
The precise general statement, whose proof we leave as an exercise, is the following\emphpunct: \em Any orbit of a differentiable groupoid over a base of constant dimension is a manifold of constant dimension.\em%
} %
which is also second countable (because so is the corresponding source fiber). The desired conclusion drops out at once from Lemma~\ref{lem:12B.5.1}(b).

Next, we show that each isotropy homomorphism \(\phi^x_x:\varGamma^x_x\to\varDelta^x_x\) is a submersion. Let us set \(G=\varGamma^x_x\), \(H=\varDelta^x_x\) and \(f=\phi^x_x\) for short. By our hypotheses, \(f:G\to H\) is a surjective Lie-group homomorphism. Its kernel \(K\) is a closed normal subgroup of \(G\), the quotient \(G/K\) is a Lie group, and the projection \(\pi:G\onto G/K\) is a submersion. By the first homomorphism theorem for Lie groups, there is a unique homomorphism \(\tilde f:G/K\to H\) such that \(f=\tilde f\circ\pi\). Evidently \(\tilde f\) must be bijective and thus, in view of Lemma~\ref{lem:12B.5.1}(b), a diffeomorphism.

We are now in a position to conclude that each one of the maps \(\phi^x:\varGamma^x\to\varDelta^x\) which \(\phi\) induces between two corresponding source fibers is a submersion. In fact, our claim is a straightforward consequence of what we have already shown, Equation~\eqref{eqn:12B.3.6a}, and the principality of the Lie-group bundles \(\pr^\varGamma_x:\varGamma^x\onto O^\varGamma_x\) and \(\pr^\varDelta_x:\varDelta^x\onto O^\varDelta_x\).

Finally, for every arrow \(g\in\varGamma^x\) the matrix expression
\begin{equation*}
 T_g\phi_{(1)}=%
 \begin{pmatrix}
  T_x\phi_{(0)} & 0
 \\
  *             & T_g\phi^x
 \end{pmatrix}=%
 \begin{pmatrix}
  \id & 0
 \\
  *   & T_g\phi^x
 \end{pmatrix}
\end{equation*}
associated with any choice of splittings to the surjective linear maps \(T_gs^\varGamma\) and \(T_{\phi(g)}s^\varDelta\) makes it evident that the linear map \(T_g\phi_{(1)}\) must be surjective. \end{proof}

Recall that a differentiable groupoid \(\varGamma\rightrightarrows X\) is \emph{locally transitive} if the associated combined source\textendash target map \((s,t):\varGamma\to X\times X\) is submersive, and \emph{transitive} if in addition the same map is surjective. By a \emph{one-orbit} groupoid we shall mean a groupoid whose combined source\textendash target map is surjective.

\begin{prop}\label{prop:one-orbit} Any one-orbit Lie groupoid is transitive. \end{prop}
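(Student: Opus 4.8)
The plan is to reduce the statement to the assertion that the combined source–target map $(s,t):\varGamma\to X\times X$ is submersive, since its surjectivity is exactly the one-orbit hypothesis, and a differentiable groupoid that is both locally transitive and one-orbit is transitive by definition. The single genuine input is that, for a one-orbit \emph{Lie} groupoid, the canonical injective immersion $\incl^\varGamma_x:O^\varGamma_x\into X$ is in fact a diffeomorphism; this is where the hypothesis of second countability enters.

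First I would fix a base point $x\in X$ (the base is non-empty by our conventions on manifolds) and observe that, since $\varGamma$ has a single orbit, the image of $\incl^\varGamma_x$ is all of $X$, so $\incl^\varGamma_x$ is a bijective immersion. The orbit $O^\varGamma_x$ is a differentiable manifold of constant dimension (any orbit of a differentiable groupoid over a base of constant dimension is of constant dimension), and it is moreover second countable, being a submersive image of the source fibre $\varGamma^x$, itself a submanifold of the second countable manifold $\varGamma$ — exactly as already recalled in the proof of Proposition~\ref{prop:12B.5.2}. Hence Lemma~\ref{lem:12B.5.1}(b) applies and $\incl^\varGamma_x:O^\varGamma_x\approxto X$ is a (global) diffeomorphism. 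The same reasoning applies at every base point $y\in X$, so each $\incl^\varGamma_y$ is a diffeomorphism.

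Next I would exploit the defining factorization $\incl^\varGamma_y\circ\pr^\varGamma_y=t^y$, where $t^y:\varGamma^y\to X$ is the restriction of the target map to the source fibre and $\pr^\varGamma_y:\varGamma^y\onto O^\varGamma_y$ is the orbit projection, which is a surjective submersion. Composing a surjective submersion with a diffeomorphism, I conclude that $t^y:\varGamma^y\to X$ is a surjective submersion for every $y\in X$. Finally, let $g\in\varGamma$ be arbitrary and put $y=sg$, $y'=tg$. Since $s$ is a submersion, $T_g(s,t)$ already surjects onto the first factor $T_yX$; and restricted to $\ker T_gs=T_g\varGamma^{y}$ the linear map $T_g(s,t)$ coincides with $\bigl(0,T_gt^{y}\bigr)$, which surjects onto $\{0\}\times T_{y'}X$ because $t^{y}$ is a submersion. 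A one-line linear-algebra argument (lift the first coordinate through $T_gs$, then correct the second coordinate using an element of $\ker T_gs$) then yields surjectivity of $T_g(s,t)$. Thus $(s,t)$ is submersive, and combined with the one-orbit hypothesis this says precisely that $\varGamma$ is transitive.

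I expect the only point requiring any care — and the place where the hypothesis ``Lie'', hence second countable, is indispensable, in accordance with the discussion in the Introduction — is the passage from ``bijective immersion'' to ``diffeomorphism'' for the orbit inclusion $\incl^\varGamma_x$; everything after that is routine bookkeeping with submersions and tangent maps.
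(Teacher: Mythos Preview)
Your proof is correct and follows essentially the same route as the paper: both reduce to showing that the orbit immersion $\incl^\varGamma_x$ is a bijective immersion between second countable manifolds of constant dimension, invoke Lemma~\ref{lem:12B.5.1}(b) to upgrade it to a diffeomorphism, and then use the factorization $t^x=\incl^\varGamma_x\circ\pr^\varGamma_x$ to conclude that each restricted target map is a submersion. The only cosmetic difference is that the paper quotes the standard equivalence ``$\varGamma$ is locally transitive $\Leftrightarrow$ every $t^x$ is submersive'' at the outset, whereas you unwind that equivalence by hand with the explicit linear-algebra splitting of $T_g(s,t)$; this is the same content spelled out in slightly more detail.
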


\begin{proof} A general differentiable groupoid \(\varGamma\rightrightarrows X\) is locally transitive if, and only if, for every base point \(x\in X\) the restricted target map \(t^x:\varGamma^x\to X\) is a submersion. If \(t^x\) is surjective then the orbit immersion \(\incl^\varGamma_x:O^\varGamma_x\into X\) is a bijection. Moreover, if \(\varGamma\) is second countable and \(X\) is of constant dimension then the orbit \(O^\varGamma_x\) is a second countable manifold of constant dimension. We conclude from Lemma~\ref{lem:12B.5.1}(b) that whenever \(\varGamma\rightrightarrows X\) is a one-orbit Lie groupoid the orbit immersion \(\incl^\varGamma_x\) is a diffeomorphism and consequently \(t^x=\incl^\varGamma_x\circ\pr^\varGamma_x\) is a submersion. \end{proof}

\begin{npar}[\em Counterexamples\em]\label{npar:12B.5.3} {\bf(a)}\emphpunct{} Let \(M=N\sqcup N\) denote the disjoint union of two copies of a smooth manifold \(N\). Let \(G\) be any Lie group of positive dimension. Let \(\varGamma\) and \(\varDelta\) respectively denote the trivial Lie-group bundles \((G\times N)\sqcup(G\times N)=G\times M\to M\) and \(G\times N\to N\). The map \((1_G\times\id_N)\sqcup\id_{G\times N}:\varGamma\to\varDelta\sidetext(\text{where $1_G$ indicates the constant endomorphism of }G)\) is a surjective Lie-group\-oid homomorphism for which the conclusion of Proposition~\ref{prop:12B.5.2} fails. This shows that the hypothesis \(\phi_{(0)}=\id\) in that proposition cannot be relaxed in any substantial way.

{\bf(b)}\emphpunct{} Let \(G\) denote the one-di\-men\-sion\-al differentiable group arising from the identification~\eqref{eqn:G}. Consider the two translation groupoids \(\varGamma=G\ltimes\R^2\) and \(\varDelta=(\R^2,+)\ltimes\R^2\), the group action in either case being given by the formula \((s,t)\cdot(a,b)=(s+a,t+b)\). Each of them is a one-orbit differentiable groupoid. However, \(\varGamma\) is not transitive. The isotropy groups of \(\varGamma\) and \(\varDelta\) are all trivial. The ``identical'' homomorphism from \(\varGamma\) onto \(\varDelta\) is bijective and covers the identity over \(\varGamma_{(0)}=\varDelta_{(0)}=\R^2\). However, none of the injective immersions induced between the orbits \(O^\varGamma_x\into O^\varDelta_x\) (albeit bijective) can be a diffeomorphism (for reasons of dimension since \(\dim O^\varGamma_x=1\) whereas \(\dim O^\varDelta_x=2\)). \end{npar}

For the sake of completeness, we record the following useful lemma, which shows that there is essentially no such thing as a theory of differentiable groups beyond the classical theory of Lie groups. The only way in which a general differentiable group may fail to be a Lie group is in having uncountably many connected components, each component being a perfectly nice, smooth manifold.

\begin{lem}\label{lem:12B.5.4} Every connected differentiable group is a Lie group. \end{lem}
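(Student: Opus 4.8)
The plan is to check that a connected differentiable group $G$ fails in none of the three respects that distinguish an arbitrary differentiable manifold from a \emph{smooth} one --- constant dimension, the Hausdorff property, and second countability --- and then to conclude by the conventions fixed at the outset: a Lie group is precisely a differentiable group whose underlying manifold is smooth. Two of these three points do not even require connectedness. Constant dimension is immediate, since the function $\dim G\colon G\to\N$ is locally constant and $G$ is connected. For the Hausdorff property I would first observe that, as $m^G$ and $i^G$ are of class $C^\infty$ and hence continuous, $G$ is a topological group for its manifold topology; being a $C^\infty$ manifold it is locally Euclidean, hence $T_1$; and a $T_1$ topological group is Hausdorff --- given $g\neq g'$ one has $g(g')^{-1}\neq e$, so continuity of $(a,b)\mapsto ab^{-1}$ at $(g,g')$ yields open neighbourhoods $U\ni g$, $U'\ni g'$ with $U(U')^{-1}$ missing $e$, i.e.\ with $U\cap U'=\emptyset$.

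The substantive step is second countability, and this is where connectedness enters. Since $G$ is locally compact (part of the definition of a differentiable manifold) and, by the previous paragraph, Hausdorff, the identity $e$ admits a relatively compact open neighbourhood $W$; I would set $V=W\cap i^G(W)$, a symmetric, relatively compact, open neighbourhood of $e$. The subgroup $H=\bigcup_{n\geq1}V^n$ generated by $V$ contains the open set $V$ and is therefore open, hence closed (its complement is a union of cosets, each open), so $H=G$ by connectedness. Each power $V^n$ is contained in $\overline{V}^{\,n}$, which is compact, being an iterated continuous image of $\overline{V}\times\cdots\times\overline{V}$ under multiplication; thus $G=\bigcup_{n\geq1}\overline{V}^{\,n}$ is $\sigma$-compact. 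Finally I would upgrade $\sigma$-compactness to second countability using local Euclideanness: writing $G=\bigcup_nK_n$ with each $K_n$ compact, cover each $K_n$ by finitely many chart domains, each homeomorphic to an open subset of some $\R^d$ and hence second countable; the resulting countable family of second-countable open sets covers $G$, and the union of their countable bases is a countable base for $G$.

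Having established that the underlying manifold of $G$ is of constant dimension, Hausdorff and second countable --- that is, smooth --- I conclude that $G$ is a smooth, equivalently Lie, group. I do not expect a genuine obstacle: the argument is an assembly of standard point-set facts. The one place that needs care is the chain ``connected $+$ locally compact $\Rightarrow$ $\sigma$-compact $\Rightarrow$ (via local Euclideanness) second countable'', which is the only use of connectedness; and one must keep in mind that, in the conventions of this paper, the word ``manifold'' carries neither the Hausdorff nor the second-countability hypothesis, so both of those properties must be produced rather than assumed.
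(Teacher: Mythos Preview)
Your proof is correct. The route you take for second countability, however, differs from the paper's. You pass through the intermediate notion of $\sigma$-compactness: a symmetric relatively compact neighbourhood $V$ of $e$ generates an open (hence closed, hence all of $G$) subgroup $\bigcup_n V^n$, each $\overline{V}^{\,n}$ is compact, and then $\sigma$-compactness plus local Euclideanness yields a countable base. The paper instead constructs a countable dense subset directly: fixing a chart $\varphi:U\approxto\R^n$ at $e$ and setting $R=\varphi^{-1}(\Q^n)$, one checks that products of dense subsets are dense in the products, so $\bigcup_k R^k$ is dense in $\bigcup_k U^k=G$; then translates $V_kg_l$ of a shrinking family of symmetric neighbourhoods by a dense sequence $\{g_l\}$ form a countable base. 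Your approach is more modular and quotes standard point-set facts; the paper's is more self-contained and avoids naming $\sigma$-compactness as a separate step.

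One small remark: your opening sentence says two of the three points do not require connectedness, yet your argument for constant dimension does use it. The paper observes instead that translations are diffeomorphisms, which gives constant dimension for \emph{any} differentiable group, connected or not; so in fact constant dimension and Hausdorffness are both automatic, and connectedness is genuinely needed only for second countability.
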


\begin{proof} Let \(G\) be any such group. Since translations in \(G\) are diffeomorphisms, \(G\) is of constant dimension. By a standard argument, any differentiable group is Hausdorff. Thus, the claim is essentially all about second countability.

The connectedness of \(G\) entails that if \(U\) is any neighborhood of the unit \(e\) then \(G=\bigcup_{k=1}^\infty U^k\) where \(U^k=U\dotsm U\) (\(k\)\mdash fold product in \(G\)). The product \(ST\) of any dense subsets \(S\subset V\emphpunct,T\subset W\) of two arbitrary subsets \(V\emphpunct,W\) of \(G\) must be dense within the product \(VW\). Indeed, let \(\varv\in V\), \(\varw\in W\) and let \(U\ni\varv\varw\) be any open neighborhood. By the continuity of group multiplication, we may find open neighborhoods \(V'\ni\varv\emphpunct,W'\ni\varw\) so that \(V'W'\subset U\). By density, there will be elements \(s\in S\cap V'\emphpunct,t\in T\cap W'\). Then, \(st\in ST\cap V'W'\subset ST\cap U\).

Let us fix an arbitrary local chart \(\varphi:U\approxto\R^n\) for \(G\) with center at \(e=\varphi^{-1}(0)\). For every \(k\in\N\) let us put \(U_k=\varphi^{-1}\bigl(B_{1/k}(0)\bigr)\), where \(B_{1/k}(0)\) denotes the open ball \(\setofall x\in\R^n\suchthat\norm x<1/k\endsetofall\). Since \(R\approx_\varphi\Q^n\) is a dense subset of \(U\approx_\varphi\R^n\), we conclude from the above that \(R^k\) must be dense within \(U^k\) for all \(k\) and hence that \(\bigcup_{k=1}^\infty R^k\) must be dense within \(\bigcup_{k=1}^\infty U^k=G\). Thus \(G\) has to contain some dense sequence \(\{g_l\}_{l\in\N}\). We contend that \(\setofall V_kg_l\suchthat(k,l)\in\N\times\N\endsetofall\) where \(V_k=U_k\cap U_k^{-1}\) must be a countable basis for the topology of \(G\). Indeed, given any element \(\varw\) of an open subset \(W\) of \(G\) we choose \(k\) so that \(V_kV_k\subset U_kU_k\subset W\varw^{-1}\) and then \(l\) so that \(g_l\in V_k\varw\); then clearly \(\varw\in V_kg_l\subset W\). \end{proof}

\begin{prop}\label{prop:12B.5.5} Let\/ \(\phi:\varGamma\to\varDelta\) be a homomorphism between two arbitrary differentiable groupoids. Suppose that\/ \(\phi\) covers the identity and is faithful as an abstract functor. Then\/ \(\phi\) is a monomorphism. \end{prop}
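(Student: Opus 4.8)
The plan is to verify, separately, the two conditions defining a monomorphism: that the arrow map $\phi_{(1)}$ is injective and that every tangent map $T_g\phi_{(1)}$ is injective. Injectivity of $\phi_{(1)}$ will be immediate from faithfulness: if $\phi(g_1)=\phi(g_2)$, then applying the source and target of $\varDelta$ and using $\phi_{(0)}=\id$ forces $sg_1=sg_2=:x$ and $tg_1=tg_2=:x'$, so $g_1,g_2$ lie in the same hom-set $\varGamma(x,x')$ and have equal image under the (abstract) map $\varGamma(x,x')\to\varDelta(x,x')$ induced by $\phi$; faithfulness then gives $g_1=g_2$. This is the first of only two places where faithfulness is needed, the second being step~(ii) below.

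For immersivity, fix $g\in\varGamma^x$ with $x=sg$. I would first reproduce the tangent-space bookkeeping from the last paragraph of the proof of Proposition~\ref{prop:12B.5.2}: choosing splittings of the surjective maps $T_gs^\varGamma$ and $T_{\phi g}s^\varDelta$, and using $s^\varDelta\circ\phi_{(1)}=s^\varGamma$ (valid since $\phi$ covers the identity), one sees that $T_g\phi_{(1)}$ is block triangular, with the identity of $T_xX$ as one diagonal block and the induced map $T_g\phi^x\colon T_g\varGamma^x\to T_{\phi g}\varDelta^x$ as the other; hence $T_g\phi_{(1)}$ is injective once $T_g\phi^x$ is. So it suffices to show each map on source fibres $\phi^x\colon\varGamma^x\to\varDelta^x$ is an immersion. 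For this I would use the principal-bundle projections $\pr^\varGamma_x\colon\varGamma^x\onto O^\varGamma_x$ and $\pr^\varDelta_x\colon\varDelta^x\onto O^\varDelta_x$ to split the relevant tangent spaces into vertical and horizontal parts: since $\phi^x$ is equivariant along $\phi^x_x$ and descends to $O^\phi_x$ by~\eqref{eqn:12B.3.6a}, the map $T_g\phi^x$ is again block triangular for this splitting, its vertical block being conjugate (via right translations) to $T_{1_x}\phi^x_x\colon T_{1_x}\varGamma^x_x\to T_{1_x}\varDelta^x_x$ and its horizontal block being $T_{[g]}O^\phi_x$. Thus $\phi^x$ is an immersion provided (i)~$O^\phi_x$ is an immersion and (ii)~$T_{1_x}\phi^x_x$ is injective.

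Step~(i) requires no hypothesis beyond $\phi_{(0)}=\id$: differentiating $\incl^\varDelta_x\circ O^\phi_x=\phi_{(0)}\circ\incl^\varGamma_x=\incl^\varGamma_x$ (equation~\eqref{eqn:12B.3.6b}) and using that the orbit immersion $\incl^\varGamma_x$ is immersive (recalled in Section~\ref{sec:1}) shows $T_{[g]}O^\phi_x$ is injective at every point of $O^\varGamma_x$. Step~(ii) is where I expect the only genuine subtlety, and it is handled by Lemma~\ref{lem:12B.5.4}, this appendix's standard device: the isotropy groups $\varGamma^x_x$ and $\varDelta^x_x$ are a~priori only differentiable groups, but their identity components are open connected subgroups and hence Lie groups by Lemma~\ref{lem:12B.5.4}, so $\phi^x_x$ restricts to a homomorphism of Lie groups between their identity components which is injective by faithfulness; an injective homomorphism of Lie groups has constant rank equal to its source dimension (its non-empty fibres being points, hence zero-dimensional), so its differential at the identity — which is exactly $T_{1_x}\phi^x_x$ — is injective. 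This completes the argument. The main obstacle is precisely step~(ii): one cannot simply cite ``an injective homomorphism is an immersion'' from Lie theory, since the isotropy groups here need not be Lie groups, and the reduction to identity components via Lemma~\ref{lem:12B.5.4} is essential; everything else is formal linear algebra transcribed from the proof of Proposition~\ref{prop:12B.5.2}.
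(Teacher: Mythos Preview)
Your proof is correct and follows essentially the same route as the paper's: reduce to orbits and isotropy via the principal-bundle and block-triangular bookkeeping of Proposition~\ref{prop:12B.5.2}, handle the orbit map via~\eqref{eqn:12B.3.6b}, and handle the isotropy differential at the identity by invoking Lemma~\ref{lem:12B.5.4}. The only cosmetic difference is in step~(ii): the paper concludes injectivity of \(T_{1_x}\phi^x_x\) from the naturality of the exponential map (if \(\xi\) lies in the kernel then \(\phi^x_x(\exp t\xi)\) is the trivial one-parameter subgroup, whence \(\exp t\xi\equiv 1\) by faithfulness), whereas you use the constant-rank characterization of injective Lie-group homomorphisms; both are standard and equivalent here.
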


\begin{proof} In view of the hypothesis \(\phi_{(0)}=\id\), Equation~\eqref{eqn:12B.3.6b} implies that each one of the maps \(O^\phi_x:O^\varGamma_x\to O^\varDelta_x\) induced by \(\phi\) between two corresponding orbits is an injective immersion. As next step, we show that each isotropy homomorphism \(\phi^x_x:\varGamma^x_x\to\varDelta^x_x\) is an immersion. Clearly, it will be enough to show that \(T_{1_x}\phi^x_x\) is an injective linear map. By Lemma~\ref{lem:12B.5.4}, the identity component of \(\varGamma^x_x\) is a Lie group. The claim is then an immediate consequence of the naturality of the exponential map of a Lie group \cite[(3.2), p.~23]{BtD}. The proof proceeds by analogy with that of Proposition~\ref{prop:12B.5.2}. \end{proof}

\begin{cor}\label{cor:12B.5.6} Any fully faithful homomorphism of Lie groupoids which covers the identity is an isomorphism. \qed \end{cor}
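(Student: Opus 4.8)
The plan is to obtain the statement as an immediate corollary of Propositions~\ref{prop:12B.5.2} and \ref{prop:12B.5.5}, which between them already isolate the two ``halves'' of the assertion. So suppose \(\phi:\varGamma\to\varDelta\) is a fully faithful homomorphism of Lie groupoids covering the identity over a common base manifold \(X\).

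First I would exploit fullness: since \(\phi\) is full and covers the identity, Proposition~\ref{prop:12B.5.2} shows that \(\phi\) is an epimorphism, i.e.~the induced map on arrows \(\phi_{(1)}:\varGamma_{(1)}\to\varDelta_{(1)}\) is a surjective submersion. Next I would exploit faithfulness: since \(\phi\) is faithful and covers the identity, Proposition~\ref{prop:12B.5.5} shows that \(\phi\) is a monomorphism, i.e.~\(\phi_{(1)}\) is an injective immersion. (Alternatively, one could observe that a fully faithful functor covering the identity is bijective on arrows, and feed this surjectivity together with the injective-immersion conclusion of Proposition~\ref{prop:12B.5.5} into Lemma~\ref{lem:12B.5.1}(b), using that \(\varGamma_{(1)}\) is second countable of constant dimension; but invoking Proposition~\ref{prop:12B.5.2} is more direct.) Combining the two conclusions, \(\phi_{(1)}\) is a bijective map which is at once a submersion and an immersion at every point, hence a local diffeomorphism, hence, being a bijection, a global diffeomorphism. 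Since moreover \(\phi_{(0)}=\id_X\), the set-theoretic inverse functor\textemdash whose arrow component is \((\phi_{(1)})^{-1}\) and whose object component is \(\id_X\)\textemdash is automatically smooth and commutes with all the groupoid structure maps, so it is a homomorphism of Lie groupoids; therefore \(\phi\) is an isomorphism in \(\LGpd\).

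I do not anticipate a genuine obstacle here. All the substantive work\textemdash which is precisely where the second-countability hypothesis enters, through Lemma~\ref{lem:12B.5.1} and its use in the proofs of Propositions~\ref{prop:12B.5.2} and \ref{prop:12B.5.5}\textemdash has already been done. What remains is only the elementary observation that a bijective local diffeomorphism is a diffeomorphism, and that the categorical inverse of such a functor covering the identity is again a homomorphism of Lie groupoids, both of which are routine.
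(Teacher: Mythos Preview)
Your argument is correct and is exactly the intended one: the paper itself leaves the corollary with a bare \qed precisely because it follows immediately by combining Propositions~\ref{prop:12B.5.2} and~\ref{prop:12B.5.5} as you do. The remaining observations (bijective local diffeomorphism $\Rightarrow$ diffeomorphism, smoothness of the inverse functor) are routine and match the spirit of the paper's omission.
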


{\footnotesize
\bibliographystyle{abbrv}
\bibliography{bib/2014a}
}%
\end{document}